\crefname{hypothesis}{Hypothesis}{Hypotheses}
\numberwithin{equation}{section}
\newcommand{\lipsymb}{\mathsf{L}}
\newcommand{\inclu}[0] {\ar@{^{(}->}}
\newcommand{\dist}{{\rm dist}}
\newcommand{\R}{\mathbb{R}}
\newcommand{\EE}{\mathbb{E}}
\newcommand{\trace}{\mathrm{Tr}}
\newcommand{\epi}{\mathrm{epi}\,}
\newcommand{\sign}{\mathrm{sign}}
\newcommand{\RR}{\mathbb{R}}
\newcommand{\cF}{\mathcal{F}}
\newcommand{\proj}{\mathrm{proj}}
\newcommand{\cX}{\mathcal{X}}
\newcommand{\prox}{{\rm prox}}
\newcommand{\dom}{{\rm dom}\,}
\newcommand{\argmin}{\operatornamewithlimits{argmin}}
\newcommand{\ls}{\operatornamewithlimits{limsup}}
\def\cut#1{{}}
\newtheorem{thm}{Theorem}[section]
\newtheorem{lem}[thm]{Lemma}
\newtheorem{cor}[thm]{Corollary}
\newtheorem{assumption}{Assumption}
\newtheorem{example}{Example}[section]
\DeclarePairedDelimiter{\dotp}{\langle}{\rangle}
\title{Stochastic model-based minimization of weakly convex functions\thanks{Submitted to the editors March 29, 2018. The results in this paper are a combination of the two arXiv preprints \cite{davis2018stochastic,model_based_DDDD}.
		\funding{Research of Drusvyatskiy was supported by the AFOSR YIP award FA9550-15-1-0237 and by the NSF DMS   1651851 and CCF 1740551 awards.}}}
\author{Damek Davis\thanks{School of Operations Research and Information Engineering, Cornell University,
		Ithaca, NY 14850, USA 
		(\email{dsd95@cornell.edu}, \url{people.orie.cornell.edu/dsd95}).}
	\and Dmitriy Drusvyatskiy\thanks{Department of Mathematics, University of Washington, 
		Seattle, WA 98195 
		(\email{ddrusv@uw.edu}, \url{www.math.washington.edu/\~ddrusv}).}
	}
\begin{document}

\maketitle

\begin{abstract}
	We consider a family of algorithms that successively sample and minimize  simple stochastic models of the objective function.  
We show that under reasonable conditions on approximation quality and regularity of the models, any such algorithm drives a natural stationarity measure to zero at the rate $O(k^{-1/4})$. 
	As a consequence, we obtain the first complexity guarantees for the stochastic proximal point, proximal subgradient, and regularized Gauss-Newton methods for minimizing compositions of convex functions with smooth maps. The guiding principle, underlying the complexity guarantees, is that all algorithms under consideration can be interpreted as approximate descent methods on an implicit smoothing of the problem, given by the Moreau envelope. Specializing to classical circumstances, we obtain the long-sought convergence rate of the stochastic projected gradient method, without batching, for minimizing a smooth function on a closed convex set. 
\end{abstract}

\begin{keywords}
  stochastic, subgradient, proximal, prox-linear, Moreau envelope, weakly convex
\end{keywords}

\begin{AMS}
 65K05, 65K10, 90C15, 90C30
\end{AMS}

\section{Introduction}
Stochastic optimization plays a central role in the statistical sciences, underlying all aspects of learning from data.  The goal of stochastic optimization in data science is to learn a decision rule from a limited data sample, which generalizes well to the entire population. Learning such a decision rule amounts to  minimizing the {\em regularized population risk}: 
\begin{align}\label{eqn:SO}
\min_{x \in \RR^d}~ \varphi(x)=f(x)+r(x)\qquad \textrm{where}\qquad f(x)=\EE_{\xi\sim P}[f(x,\xi)].\tag{{SO}}
\end{align}
Here, $\xi$ encodes the population data, which is assumed to follow some fixed but unknown probability distribution $P$.  The functions $f$ and $r$ play qualitatively different roles. Typically, $f(x,\xi)$ evaluates the loss of the decision rule parametrized by $x$ on a data point $\xi$.  
In contrast, the function $r\colon\R^d\to\R\cup\{+\infty\}$ models constraints on the parameters $x$ or encourages $x$ to have some low dimensional structure, such as sparsity or low rank. Within a Bayesian framework, the regularizer $r$ can model  prior distributional information on $x$.

Robbins-Monro's pioneering 1951 work \cite{MR0042668} gave the first procedure for solving (\ref{eqn:SO}) in the setting when $f(\cdot, \xi)$ are smooth and strongly convex and $r=0$, thereby inspiring decades of further research. Among such algorithms, the proximal stochastic (sub)gradient method is the most successful and widely used in practice. This method constructs a sequence of approximations $x_t$ of the minimizer of (\ref{eqn:SO}) by iterating:
\begin{equation}\label{eqn:SG}\left\{
	\begin{aligned}
	&\textrm{Sample } \xi_t \sim P \\
	& \textrm{Set } x_{t+1}= \prox_{\alpha_t r}\left(x_t - \alpha_t \nabla_x f(x_t, \xi_t)\right)
	\end{aligned}\right\},\tag{{SG}}
	\end{equation}
where $\alpha_t>0$ is an appropriate control sequence and $\prox_{\alpha r}(\cdot)$ is the proximal map
 \begin{equation*}
\prox_{\alpha r}(x):=\argmin_{y}\, \left\{r(y)+\tfrac{1}{2\alpha}\|y-x\|^2\right\}.
\end{equation*}
Thus, in each iteration, the method travels from $x_t$ in the direction opposite to a sampled gradient $\nabla_x f(x_t, \xi_t)$, followed by a proximal operation.

Nonsmooth convex problems may be similarly optimized by replacing sample gradients by sample subgradients $v_t\in \partial_x f(x_t,\xi_t)$, where $\partial_x f(x, \xi)$ is the subdifferential in the sense of convex analysis \cite{con_ter}. Even more broadly, when $f(\cdot, \xi)$ is neither smooth nor convex, the symbol $\partial_x f(\cdot, \xi)$ may refer to a generalized subdifferential. The formal definition of the subdifferential appears in Section~\ref{sec:main}, and is standard in the optimization literature (e.g. \cite[Definition 8.3]{RW98}). 
The reader should keep in mind that in practice, the functions $f(\cdot,\xi_t)$ are often all differentiable along the iterate sequence $\{x_t\}$. Therefore from the viewpoint of implementation, one always computes the true gradients of the sampled functions, using conventional means.  On the other hand, the nonsmoothness cannot be ignored in the  analysis, since $(i)$ the gradients do not vary continuously and  $(ii)$ the objective function is can be nonsmooth at every limit point of the process. We will expand on these two observations shortly.

Performance of stochastic optimization methods is best judged by their \emph{sample complexity} -- the number of i.i.d. realizations $\xi_1, \ldots, \xi_N \sim P$ needed to reach a desired accuracy of the decision rule. Classical results~\cite{complexity} stipulate that for convex problems, it suffices to generate $O(\varepsilon^{-2})$ samples to reach functional accuracy $\varepsilon$ in expectation, and this complexity is unimprovable without making stronger assumptions. For smooth problems, the stochastic gradient method has sample complexity of  $O(\varepsilon^{-4})$ to reach a point with the gradient norm at most $\varepsilon$ in expectation \cite{ghad,Ghadimi2016mini,wotao}.

Despite the ubiquity of the stochastic subgradient method in applications, its sample complexity is not yet known for any reasonably wide class of problems beyond those that are smooth or convex.
This is somewhat concerning as the stochastic subgradient method is the simplest and most widely-used optimization algorithm for large-scale problems arising in machine learning and is the core optimization subroutine in industry backed software libraries, such as Google's TensorFlow~\cite{tensorflow2015-whitepaper}.

The purpose of this work is to provide the first sample complexity bounds for a number of popular stochastic algorithms on a reasonably broad class of nonsmooth and nonconvex  optimization problems. The problem class we consider captures a variety of important computational tasks in data science, as we illustrate below, while the algorithms we analyze include the proximal stochastic subgradient, proximal point, and regularized Gauss-Newton methods. Before stating the complexity guarantees, we must first explain the ``stationarity measure'' that we  will use  to judge the quality of the iterates. It is this stationarity measure that tends to zero at a controlled rate.

\subsection*{The search for stationary points}
Convex optimization algorithms are judged by the rate at which they decrease the function value along the iterate sequence. Analysis of smooth optimization algorithms  focuses instead on the magnitude of the gradients along the iterates. The situation becomes quite different for problems that are neither smooth nor convex.

As in the smooth setting, the primary goal of nonsmooth nonconvex optimization is the search for stationary points. 
A point $x\in\R^d$ is called {\em stationary} for the problem (\ref{eqn:SO}) if the inclusion $0\in \partial \varphi(x)$ holds. In ``primal terms'', these are precisely the points where the directional derivative of $\varphi$ is nonnegative in every direction. Indeed, under mild conditions on $\varphi$, equality holds \cite[Proposition 8.32]{RW98}:
\begin{equation*} 
\dist(0;\partial \varphi(x))=-\inf_{v:\, \|v\|\leq 1} \varphi'(x;v).
\end{equation*}
Thus a point $x$, satisfying $\dist(0;\partial \varphi(x))\leq \varepsilon$, approximately satisfies first-order necessary conditions for optimality.

An immediate difficulty in analyzing stochastic methods for nonsmooth and nonconvex problems is that it is not a priori clear how to measure the progress of the algorithm. Neither the functional suboptimality gap, $\varphi(x_t)-\min \varphi$, nor the stationarity measure, $\dist(0;\partial \varphi(x_t))$, necessarily tend to zero along the iterate sequence. This difficulty persists even in the simplest setting of minimizing a smooth function on a closed convex set by the stochastic projected gradient method. Indeed, what is missing is a continuous measure of stationarity to monitor, instead of the highly discontinuous function $x\mapsto\dist(0;\partial \varphi(x))$.

\subsection*{Weak convexity and the Moreau envelope}
 In this work, we focus on a class of problems that naturally admit a continuous measure of stationarity. We say that a function $g\colon\R^d\to\R$  is
{\em $\rho$-weakly convex} if the assignment $x\mapsto g(x)+\frac{\rho}{2}\|x\|^2$ is convex. The class of weakly convex functions, first introduced in English in~\cite{Nurminskii1973}, is  broad. It includes all convex functions and smooth functions with Lipschitz continuous gradient.
 More generally,  any function of the form $$g(x) = h(c(x)),$$ with $h$ convex and Lipschitz and $c$ a smooth map with Lipschitz Jacobian, is weakly convex \cite[Lemma 4.2]{comp_DP}. Notice that such composite functions need not be smooth nor convex; instead, the composite function class nicely interpolates between the smooth and convex settings. Classical literature highlights the importance of weak convexity in optimization \cite{fav_C2,amen,prox_reg}, while recent advances in statistical learning and signal processing have further reinvigorated the problem class. Nonlinear least squares, phase retrieval  \cite{eM,duchi_ruan_PR, proj_weak_dim}, minimization of the Conditional Value-at-Risk \cite{bental_teb_86,MR2332265,Rockafellar00optimizationof}, graph synchronization  \cite{ban_boum,ang_sing,abbe_band}, covariance estimation \cite{MR3367819}, and robust principal component analysis \cite{rob_cand,chand} naturally lead to weakly convex formulations. For a recent discussion on the role of weak convexity in large-scale optimization, see e.g., \cite{prox_point_surv}. 
 
 It has been known since Nurminskii's work \cite{Nurminskii1974,Nurminskii1973} that when the functions $f(\cdot,\xi)$ are $\rho$-weakly convex and $r=0$, the stochastic subgradient method on \eqref{eqn:SO} generates an iterate sequence that subsequentially converges to a stationary point of the problem, almost surely. Nonetheless, the sample complexity of the basic method and of its proximal extension, has remained elusive. Our approach to resolving this open question relies on an elementary observation: weakly convex problems naturally admit a continuous measure of stationarity through implicit smoothing.
The key construction we use is the {\em Moreau envelope} \cite{MR0201952}:
$$\varphi_{\lambda}(x):=\min_{y}\, \left\{\varphi(y)+\tfrac{1}{2\lambda}\|y-x\|^2\right\},$$
where $\lambda > 0$. Standard results (e.g. \cite{MR0201952}, \cite[Theorem 31.5]{rock}) show that as long as $\varphi$ is $\rho$-weakly convex and $\lambda<\rho^{-1}$, the envelope $\varphi_{\lambda}$ is $C^1$-smooth with the gradient  given by 
\begin{equation*}
\nabla \varphi_{\lambda}(x)=\lambda^{-1}(x-\prox_{\lambda \varphi}(x)).
\end{equation*}
See Figure~\ref{fig:morea_env} for an illustration.

\begin{figure}[h!]
\begin{subfigure}{.5\textwidth}
  \centering
  \includegraphics[scale=0.3]{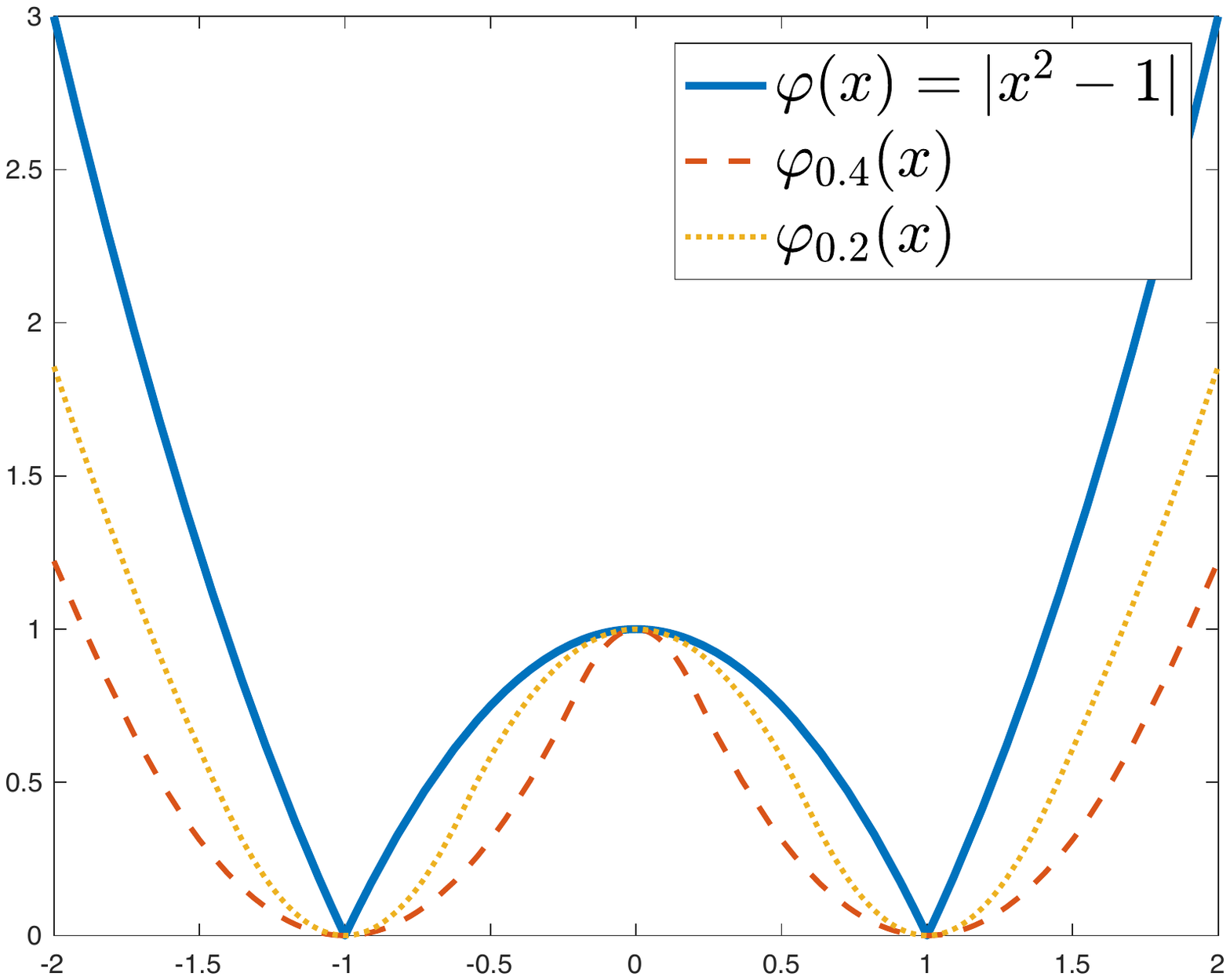}
  \caption{Moreau envelope of $\varphi(x)=|x^2-1|$}
  \label{fig:morea_env}
\end{subfigure}%
\begin{subfigure}{.5\textwidth}
  \centering
  \includegraphics[scale=0.68]{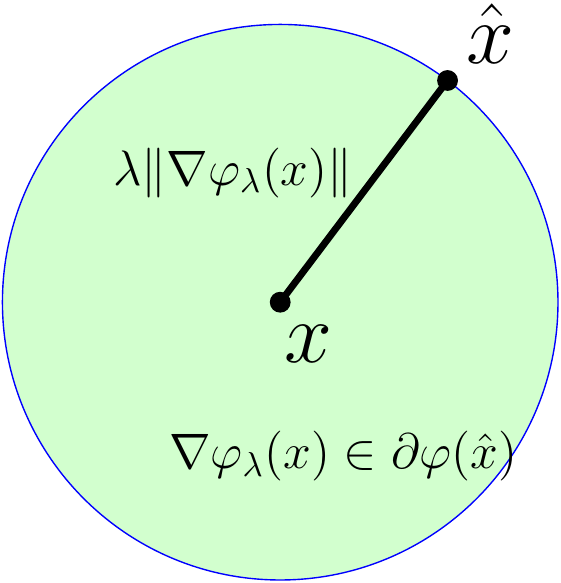}
  \caption{Approximate stationarity}
  \label{fig:approx_stat}
\end{subfigure}
  \caption{An illustration of the Moreau envelope}
\end{figure}

When $f$ is $C^1$-smooth with $\beta$-Lipschitz gradient and there is no regularizer $r$, the norm $\|\nabla \varphi_{1/\beta}(x)\|$ is proportional to the magnitude of the true gradient $\|\nabla f(x)\|$. More generally, when $f$ is $C^1$-smooth and $r$ is nonzero, the norm $\|\nabla \varphi_{1/\beta}(x)\|$ is proportional to the size of the proximal gradient step, commonly used to measure convergence in additive composite minimization \cite{nest_conv_comp}. See the end of Section~\ref{sec:subdiff_mor_env} for a precise statement.
In the broader nonsmooth setting, the norm of the gradient $\|\nabla \varphi_{\lambda}(x)\|$ has an intuitive interpretation in terms of near-stationarity for the target problem $\min_{x} \varphi(x)$. Namely, the definition of the Moreau envelope directly implies that for any point $x\in\R^d$, the proximal point $\hat x:=\prox_{\lambda \varphi}(x)$ satisfies
\begin{equation*}
\left\{\begin{array}{cl}
\|\hat{x}-x\|&=  \lambda\|\nabla \varphi_{\lambda}(x)\|,\\ 
\varphi(\hat x) &\leq \varphi(x),\\
\dist(0;\partial \varphi(\hat{x}))&\leq \|\nabla \varphi_{\lambda}(x)\|.
\end{array}\right. 
\end{equation*}
Thus a small gradient $\|\nabla \varphi_{\lambda}(x)\|$ implies that $x$ is {\em near} some point $\hat x$ that is {\em nearly stationary} for $\varphi$; see Figure~\ref{fig:approx_stat}. In the language of numerical analysis, one can interpret algorithms that drive the gradient of the Moreau envelope to zero as being ``backward-stable''.
For a longer discussion of the near-stationarity concept, we refer to reader to    \cite{prox_point_surv} or \cite[Section 4.1]{comp_DP}.

\subsection*{Contributions}
In this paper, we show that as long as the functions $f(\cdot,\xi)+r(\cdot)$ are $\rho$-weakly convex and mild Lipschitz conditions hold, the proximal stochastic subgradient method will generate a point $x$ satisfying $\EE\|\nabla \varphi_{1/(2\rho)}(x)\|\leq \varepsilon$ after at most $O(\varepsilon^{-4})$ iterations. This is perhaps surprising, since neither the Moreau envelope nor the proximal map of $\varphi$ explicitly appear in the definition of the stochastic proximal subgradient method. This work appears to be the first to recognize the Moreau envelope as a useful potential function for analyzing subgradient methods.

Indeed, we will show that the worst-case complexity $O(\varepsilon^{-4})$ holds for a much wider family of algorithms than the stochastic subgradient method. Setting the stage, recall that the stochastic subgradient method relies on sampling subgradient estimates of $f$, or equivalently sampling good linear models of the function. More broadly,  suppose that $f$ is an arbitrary function (not necessarily written as an expectation), and for every point $x$ we have available a family of ``models'' $\{f_x(\cdot,\xi)\}_{\xi\sim P}$, indexed by a random element $\xi\sim P$. The oracle concept we use assumes that the only access to $f$ is by sampling a model $f_x(\cdot,\xi)$ centered around any base point $x$. Naturally, to make use of such models we must have some control on their approximation quality. We will call the assignment $(x,y,\xi)\mapsto f_x(y,\xi)$ a {\em stochastic one-sided model} if it satisfies
\begin{equation}\label{eqn:err_approx_intro}
\mathbb{E}_{\xi}[f_{x}(x,\xi)]=f(x)\qquad \textrm{and}\qquad \mathbb{E}_{\xi}[f_{x}(y,\xi)-f(y)]\leq \frac{\tau}{2}\|y-x\|^2_2\qquad\forall x,y,
\end{equation}
Thus in each expectation, each model $f_{x}(\cdot,\xi)$ should lower bound $f$ up to a quadratic error, while agreeing with $f$ at the basepoint $x$. See Figure~\ref{fig:illustr_lower_model} for an illustration.

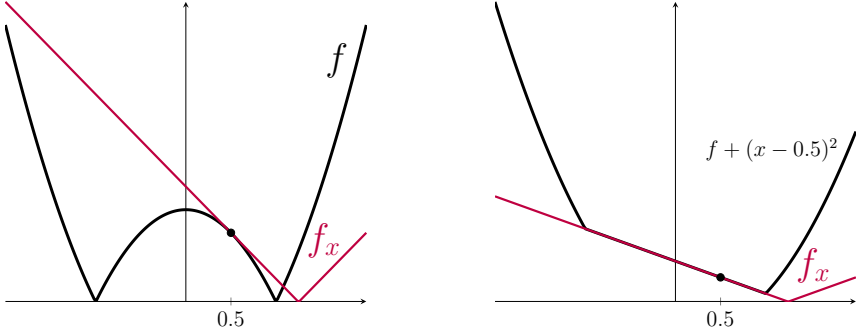
\begin{figure}[h!]
\begin{subfigure}{.5\textwidth}
	\begin{center}
		\begin{tikzpicture}[scale=0.7]
		\pgfplotsset{every tick label/.append style={font=\large}}
		\begin{axis}[%
		domain = -2:2,
		samples = 200,
		axis x line = center,
		axis y line = center,
		xtick={0.5},
		ytick=\empty
		]		
		\addplot[black, ultra thick] {abs(x^2-1)} [yshift=3pt] node[pos=.95,left] {\huge{$f$}};
		 {         \addplot[purple, very thick] {abs(0.75-(x-0.5))} [yshift=6pt] node[pos=.95,left] {\huge{$f_x$}};      }
		
		\addplot [only marks,mark=*] coordinates { (0.5,0.75) };
		\end{axis}		
		\end{tikzpicture}
	\end{center}
\end{subfigure}%
\begin{subfigure}{.5\textwidth}
	\begin{center}
		\begin{tikzpicture}[scale=0.7]
		\pgfplotsset{every tick label/.append style={font=\large}}
		\begin{axis}[%
		domain = -2:2,
		samples = 200,
		axis x line = center,
		axis y line = center,
		xtick={0.5},
		ytick=\empty
		]		
		\addplot[black, ultra thick] {abs(x^2-1)+(x-0.5)^2} [yshift=3pt] node[pos=.95,left] {\large{$f+(x-0.5)^2$}};      
		 {         \addplot[purple, very thick] {abs(0.75-(x-0.5))} [yshift=10pt] node[pos=.95,left] {\huge{$f_x$}};      }
		
		\addplot [only marks,mark=*] coordinates { (0.5,0.75) };
		\end{axis}		
		\end{tikzpicture}
	\end{center}
\end{subfigure}
  \caption{Illustration of a deterministic lower model: $f(x)=|x^2-1|$, $f_{0.5}(y)=|1.25-y|$}
  \label{fig:illustr_lower_model}
\end{figure}

The methods we consider then simply iterate the steps:
\begin{equation}\label{eqn:main_alg}
\begin{aligned}
	&\textrm{Sample }\xi_t\sim P,\\
	&\textrm{Set } x_{t+1}=\argmin_y~ \left\{f_{x_t}(y, \xi_t)+r(y)+\frac{1}{2\alpha_t}\|y-x_t\|^2\right\}.
	\end{aligned}
\end{equation}
We will prove that under mild Lipschitz conditions and provided that each function $f_{x}(\cdot,\xi)+r(\cdot)$ is $\rho$-weakly convex, Algorithm \ref{eqn:main_alg} 
finds a point $x$ with $\EE\|\nabla \varphi_{1/2\rho}(x)\|\leq \varepsilon$ after at most $O(\varepsilon^{-4})$ iterations. 
The main principle underlying the convergence guarantees is interesting in its own right. We will show that  Algorithm \ref{eqn:main_alg} can be interpreted as an approximate descent method on the Moreau envelope:
\begin{equation}\label{eqn:moreau_descent}
\EE[\varphi_{\lambda}(x_{t+1})]\leq \EE[\varphi_{\lambda}(x_{t})] -\alpha_t c_1\EE[\|\nabla \varphi_{\lambda}(x_t)\|^2]+\alpha_t^2 c_2,
\end{equation}
where $\lambda,c_1,c_2$ are problem dependent constants.

When the models $f_{x}(\cdot,\xi)$ are true under-estimators of $f$ in expectation, meaning that \eqref{eqn:err_approx_intro} holds with $\tau=0$, and the functions $f_{x}(\cdot,\xi)+r(\cdot)$ are convex, one expects guarantees that are analogous to the stochastic subgradient method for convex minimization. Indeed, we will show that under these circumstances, Algorithm~\eqref{eqn:main_alg} has complexity $O(\varepsilon^{-2})$ in terms of function value. The complexity estimate improves to $O(\frac{1}{\mu \varepsilon})$ when the functions $f_{x}(\cdot,\xi)+r(\cdot)$ are $\mu$-strongly convex. Though the convexity assumption may appear stringent, it does hold in a number of nonclassical circumstances, such as for minimizing the Condition Value-at Risk (cVaR) of a loss function; see  Example~\ref{exa:cVAr} and Section~\ref{sec:alg_examples} for details.

To crystallize the ideas,  consider the setting of stochastic composite optimization, studied recently by Duchi-Ruan \cite{duchi_ruan}: 
$$f(x,\xi)=h\big(c(x,\xi),\xi\big),$$
where the functions $h(\cdot,\xi)$ are convex and the maps $c(\cdot,\xi)$ are smooth. Note that in the simplest setting when $P$ is a discrete distribution on $\{1,\ldots,m\}$, the  problem \eqref{eqn:SO} reduces to minimizing a regularized empirical average of composite functions:
\begin{equation*} 
\min_{x \in \RR^d}~ \varphi(x)=f(x)+r(x)\qquad \textrm{where}\qquad f(x)=\frac{1}{m}\sum_{i=1}^m h_i(c_i(x))
\end{equation*}
In this setting, the following three stochastic one-sided models appear naturally:
\begin{align}
f_{x}(y,\xi)&=f(x)+\langle \nabla c(x,\xi)^Tw(x, \xi), y-x \rangle,\label{eqn:model_subgrad}\\
f_{x}(y,\xi)&=h\big(c(x,\xi)+\nabla c(x,\xi)(y-x),\xi\big),\label{eqn:model_proxlin}\\
f_{x}(y,\xi)&=h\big(c(y,\xi)),\label{eqn:model_proxpoint}
\end{align}
where $w(x, \xi) \in \partial h(c(x,\xi),\xi)$ is a subgradient selection.
Each iteration of Algorithm \ref{eqn:main_alg} with the models \eqref{eqn:model_subgrad} reduces to the {\em stochastic proximal subgradient} update, already mentioned previously. 
When equipped with the models \eqref{eqn:model_proxlin}, the method becomes the {\em stochastic prox-linear algorithm} --- a close variant of Gauss-Newton. Both of these schemes were recently investigated in \cite{duchi_ruan}, where the authors showed that almost surely all limit points are stationary for the problem \eqref{eqn:SO}. Algorithm~\ref{eqn:main_alg} equipped with the models \eqref{eqn:model_proxpoint} is the {\em stochastic proximal-point algorithm}. This scheme was recently considered for convex minimization in~\cite{ryu2014stochastic} and extended to monotone inclusions  in~\cite{bianchi2016ergodic}. Notice that in contrast to the stochastic proximal subgradient method, the stochastic proximal point and prox-linear algorithms require solving an auxiliary subproblem. The advantage of these two schemes is that the models \eqref{eqn:model_proxlin} and \eqref{eqn:model_proxpoint} provide much finer approximation quality, in that they are two-sided instead of one-sided. Indeed, empirical evidence \cite[Section 4]{duchi_ruan} suggests that the latter two algorithms can perform significantly better and are much more robust to the choice of the sequence $\alpha_t$. We also observe this phenomenon in our experiments in Section~\ref{sec:num_ill}.

The outline of the paper is as follows. We begin with Section~\ref{sec:main}, which records some basic notation and results focusing on weak convexity and the Moreau envelope. This section also presents a number of illustrative applications that will be readily amenable to our algorithmic techniques.
We  then present three distinct convergence arguments: for  the stochastic projected subgradient method in Section~\ref{sec:project_subgrad},  for the stochastic proximal subgradient method in Section~\ref{sec:proxmeth}, and  for algorithms based on general stochastic one-sided models in Section~\ref{sec:set_up_env}. Each argument has its own virtue. In particular, our guarantees for the stochastic projected subgradient method place no restriction on the parameters $\alpha_t$ to be used, in contrast to our latter results. The argument for the stochastic proximal subgradient method generalizes verbatim to the setting when $f$ is $C^1$-smooth and the stochastic gradient estimator has bounded variance, instead of a bounded second moment that we assume elsewhere. Section~\ref{sec:set_up_env} applies to the most general classes of algorithms including stochastic proximal subgradient, prox-linear, and proximal point methods.

\subsection*{Context and related literature}
The convergence guarantees we develop for the proximal stochastic subgradient method are new even in simplified cases. Two such settings are $(i)$ when $f(\cdot,\xi)$ are smooth and $r$ is the indicator function of a closed convex set, and $(ii)$ when $f$ is nonsmooth, we have explicit access to the exact subgradients of $f$, and $r = 0$. 

Analogous convergence guarantees when $r$ is an indicator function of a closed convex set were recently established for a different algorithm in \cite{prixm_guide_subgrad}. This scheme proceeds by directly applying the gradient descent method to the Moreau envelope $\varphi_{\lambda}$, with each proximal point $\prox_{\lambda \varphi}(x)$ approximately evaluated by a convex subgradient method. In contrast, we show here that the basic stochastic subgradient method in the fully proximal setting, and without any modification or parameter tuning, already satisfies the desired convergence guarantees. 

Our work  also improves in two fundamental ways on the results in the seminal papers on the stochastic proximal gradient method for smooth functions \cite{ghad,Ghadimi2016mini,wotao}: first, we allow $f(\cdot,\xi)$ to be nonsmooth and second, even when $f(\cdot,\xi)$ are smooth, we do not require the variance of our stochastic estimator for $\nabla f(x_t)$ to decrease as a function of $t$. The second contribution removes the well-known ``mini-batching" requirements common to~\cite{Ghadimi2016mini,wotao}, while the first significantly expands the class of functions for which the rate of convergence of the stochastic proximal subgradient method is known. It is worthwhile to mention that our techniques rely on weak convexity of the regularizer $r$, while~\cite{wotao} makes no such assumption.

The results in this paper are orthogonal to the recent line of work on accelerated rates of convergence for smooth nonconvex finite sum minimization problems, e.g.,~\cite{lei2017non,allen2017natasha,reddi2016proximal,Allenzhu2017-katyusha}. These works crucially exploit the finite sum structure and/or (higher order) smoothness of the objective functions to push beyond the $O(\varepsilon^{-4})$ complexity. We leave it as an intriguing open question whether such improvement is possible for the nonsmooth weakly convex setting we consider here.

The unifying concept of stochastic one-sided models has not been explicitly used before. The complexity guarantees for the proximal stochastic subgradient, prox-linear, and proximal point methods (Theorem~\ref{thm:main_theorem}) for stochastic composite minimization are new and nicely complement the recent paper \cite{duchi_ruan}. There, the authors proved that almost surely all limit points of the first two methods are stationary.
For a historical account of the prox-linear method, see e.g.,  \cite{burke_com, prox, comp_DP} and the references therein.
For a systematic study of two-sided models (e.g. \eqref{eqn:model_proxlin} and \eqref{eqn:model_proxpoint}) in optimization, see \cite{taylor}. Stochastic compositional problems have also appeared in the parallel line of work \cite{wang_mengdi}. There, the authors require the entire composite function to be either convex or smooth. We make no such assumptions here. 

The convergence rate of Algorithm~\ref{eqn:main_alg} in terms of function values in the convex setting is presented in Theorems~\ref{thm:convergenceC_nonstrong}, \ref{thm:convergenceC}, and is intriguing. Even specializing to the proximal stochastic subgradient method,  Theorems~\ref{thm:convergenceC_nonstrong} and \ref{thm:convergenceC} appear to be stronger than the state of the art. Namely, in contrast to previous work \cite{prox_subgrad_duchi,cruz_subgrad},  the norms of the subgradients of $r$ do not enter the complexity bounds established in Theorem~\ref{thm:convergenceC_nonstrong}, while Theorem~\ref{thm:convergenceC}  extends the nonuniform averaging technique of \cite{simpler_subgrad} for strongly convex minimization to the fully proximal setting.

The observation that Algorithm~\ref{eqn:main_alg} is an approximate descent method on the Moreau envelope \eqref{eqn:moreau_descent} is tangentially related to the recent work on ``inexact first-order oracles'' in convex optimization \cite{Nesterov2015,Devolder2014} and its partial extensions to nonconvex settings \cite{Dvurechensky}. Expanding on the precise relationship between the techniques is an intriguing open question.

\section{Basic notation and preliminaries}\label{sec:main}
Throughout, we consider a Euclidean space $\R^d$ endowed with an inner product $\langle\cdot,\cdot \rangle$ and the induced norm $\|x\|=\sqrt{\langle x, x\rangle}$. For any function $\varphi\colon\R^d\to\R\cup\{\infty\}$, the {\em domain} and {\em epigraph} are the sets
\begin{equation*}
\dom \varphi=\{x\in \R^d: \varphi(x)<\infty\},\qquad 
\epi \varphi=\{(x,r)\in \R^d\times\R: r\geq \varphi(x)\},
\end{equation*}
respectively. We say that $\varphi\colon\R^d\to\R\cup\{\infty\}$ is {\em closed} if the $\epi \varphi$ is a closed set.

This work focuses on algorithms for minimizing weakly convex functions.\footnote{To the best of our knowledge, the class of weakly convex functions was introduced in~\cite{Nurminskii1973}.} A function $\varphi\colon\R^d\to\R\cup\{\infty\}$ is called {\em $\rho$-weakly convex} if the assignment $x\mapsto 
\varphi(x)+\frac{\rho}{2}\|x\|^2$ is a convex function. In this section, we summarize some basic properties of this function class. All results we state in this section are either standard, or follow quickly from analogous results for convex functions. For further details and a historical account, we refer the reader to the short note \cite{prox_point_surv}.

\subsection{Examples of weakly convex functions}
Weakly convex functions are widespread in applications and are typically easy to 
recognize. One common source  is the composite function class:
\begin{equation}\label{eqn:comp}
\varphi(x):=h(c(x)),
\end{equation}
where $h\colon\R^m\to\R$ is convex and $L$-Lipschitz and $c\colon\R^d\to\R^m$ is a 
$C^1$-smooth map with $\beta$-Lipschitz continuous Jacobian. An easy argument shows that 
the composite function $\varphi$ is $L\beta$-weakly convex \cite[Lemma 4.2]{comp_DP}. Below, we list a few examples to illustrate how widespread this problem class is in large-scale data scientific applications. The examples are here only to set the context; the reader can safely skip this discussion during the initial reading.

\begin{example}[Robust phase retrieval]\label{exa:phase_retr}
	{\rm
	Phase retrieval is a common computational problem, with applications in 
diverse areas such as imaging, X-ray crystallography, and speech processing. 
For simplicity, we will consider the version of the problem over the reals.
The (real-valued) phase retrieval problem seeks to determine a point $x$ satisfying the 
magnitude conditions, $$|\langle a_i,x\rangle|\approx b_i\quad \textrm{for 
}i=1,\ldots,m,$$ where $a_i\in \R^d$ and $b_i\in\R$ are given. Whenever
gross outliers occur in the measurements $b_i$, the following robust 
formulation 
of the problem is appealing \cite{eM,duchi_ruan_PR, proj_weak_dim}:
$$\min_x ~\frac{1}{m}\sum_{i=1}^m |\langle a_i,x\rangle^2-b_i^2|.$$
The use of the $\ell_1$ penalty promotes strong recovery and stability properties even in the noiseless setting \cite{duchi_ruan_PR,eM}. Numerous 
other  nonconvex approaches to phase retrieval exist, which rely on different 
problem 
formulations; for example, \cite{wirt_flow,rand_quad,phase_nonconv}.}	
\end{example}

\begin{example}[Covariance matrix estimation]{\rm
		The problem of covariance estimation from quadratic measurements, introduced in  \cite{chen2015exact}, is a higher rank variant of  phase retrieval. Let $a_1, \ldots, a_m \in\R^d$ be measurement vectors. The goal is to recover a low rank decomposition of a covariance matrix $\bar X\bar X^T$, with $\bar X\in\RR^{d \times r}$ for a given $0 \le r \le d$, from quadratic measurements
		$$
		b_i  \approx a_i^T \bar X\bar X^T a_i  = \trace(\bar X\bar X^T a_ia_i^T).
		$$
		Note that we can only recover $\bar X$ up to multiplication by an orthogonal matrix.
		This problem arises in a variety of contexts, such as covariance sketching for data streams and spectrum estimation of stochastic processes. We refer the reader to \cite{chen2015exact} for details.
		Supposing that $m$ is even, the authors of \cite{chen2015exact} show that the following potential function has strong recovery guarantees under usual statistical assumptions:  
		\begin{equation}\label{eqn:cov_est}
		\min_{X\in \R^{d\times r}} ~\frac{1}{m}\sum_{i=1}^m \left|\left\langle XX^T,a_{2i}a_{2i}^T-a_{2i-1}a_{2i-1}^T\right\rangle-(b_{2i}-b_{2i-1})\right|.
		\end{equation}
}
\end{example}

\begin{example}[Blind deconvolution and biconvex compressive sensing]{\hfill \\ }
{\rm The problem of blind deconvolution seeks to recover a pair of vectors in two low-dimensional structured spaces from their pairwise convolution. This problem occurs in a number of fields, such as astronomy and computer vision \cite{661187,5963691}. For simplicity focusing on the real-valued case, one appealing formulation of the problem reads
$$\min_{x,y}~ \frac{1}{m}\sum_{i=1}^m | \langle u_i,x\rangle \langle v_i, y\rangle-b_i|,$$
where $u_i$ and $v_i$ are known vectors, and $b_i$ are the convolution measurements. More broadly, problems of this form fall within the area of biconvex compressive sensing \cite{MR3424852}.
Similarly to the previous two examples, the use of the $\ell_1$-penalty on the residuals allows for strong recovery and identifiability properties of the problem under statistical assumptions. Details will appear in a forthcoming paper. }
\end{example}

\begin{example}[Sparse dictionary learning]
{\rm
The problem of sparse dictionary learning seeks to find a sparse representation of the input data as a linear combination of basic atoms, which comprise the ``dictionary''. This technique is routinely used in image and video processing. More formally, given a set of vectors $\{x_1,\ldots, x_m\}\subset\R^d$, we wish to find a matrix $D\in \R^{d\times n}$ and sparse weights $\{r_1,\ldots,r_m\}\subset\R^n$ such that the error $\|x_i-Dr_i\|_2$ is small for all $i$. The following is a robust variant of the standard relaxation of the problem:
\begin{equation}\label{eqn:sparse_dic_learn}
\min_{D\in \R^{d\times n},~r_1\in \R^n} \frac{1}{m}\sum_{i=1}^m \|x_i-Dr_i\|_2+\lambda\|r_i\|_1\qquad \textrm{subject to}\quad \|D_i\|\leq 1~\forall i.
\end{equation}
More precisely, typical formulations use the squared norm $\|\cdot\|_2^2$ instead of the norm $\|\cdot\|_2$; see e.g. \cite{tosic_dic,sparse_dic_learn_many}. When there are outliers in the data (i.e. not all of the data vectors $x_i$ can be sparsely represented), the formulation \eqref{eqn:sparse_dic_learn} may be more appealing.
}
\end{example}

\begin{example}[Robust PCA]
{\rm
In robust principal component analysis, one seeks to identify sparse corruptions 
of a low-rank matrix \cite{rob_cand,chand}. One typical example is image 
deconvolution,  where the low-rank structure  models the background of an image 
while the sparse corruption models the foreground. 
 Formally, given a $m\times n$ matrix $M$, the goal is to find a decomposition 
$M=L+S$, where $L$ is low rank and $S$ is sparse. A common relaxation of the 
problem is
$$\min_{U\in \R^{m\times r},V\in \R^{n\times r}}~ \|UV^T-M\|_1,$$ 
where $r$ is the target rank. As is common, the entrywise $\ell_1$ norm encourages a sparse residual $UV^T-M$.}
\end{example}

\begin{example}[Conditional Value-at-Risk]\label{exa:cVAr}{\em 
As in the introduction, let $f(x,\xi)$ be a loss of a decision rule parametrized by $x$ on a data point $\xi$, where the population data follows a probability distribution $\xi\sim P$. Rather than minimizing the expectation $f(x)=\EE_{\xi\sim P}f(x,\xi)$, one often wishes to minimize the conditional expectation of the random variable $f(x,\cdot)$ over its $\alpha$-tail, for some fixed $\alpha\in (0,1)$. This quantity is called the Conditional Value-at-Risk (cVaR) and it has a distinguished history. In particular, it is well known from the seminal work \cite{Rockafellar00optimizationof}
that minimizing cVaR of the loss function can be formalized as\footnote{We refer the reader to \cite[pp. 44]{ROCKAFELLAR201333} and \cite{MR2332265} for a historical account of the cVaR minimization formula,  and in particular its interpretation as the ``optimized certainty equivalent'' introduced in \cite{bental_teb_86}.} 
$$\min_{\gamma\in \R, x\in \R^d}~ (1-\alpha)\gamma+\EE_{\xi\sim P}[(f(x,\xi)-\gamma)^+],$$
where  we use the notation $r^+=\max\{0,r\}$. If the loss function $f(\cdot,\xi)$ is $\rho$-weakly convex for a.e. $\xi$, then the entire objective function is  $\rho$-weakly convex jointly in $(\gamma,x)$. In particular, this is the case when $f(\cdot,\xi)$ is $C^1$-smooth with Lipschitz gradient, or when the loss is $f(\cdot,\xi)$ is convex for a.e. $\xi$. Notice that the terms inside the expectation $(f(\cdot,\xi)-\gamma)^+$ are always nonsmooth, even if the loss function $f(\cdot,\xi)$ is smooth.
}
\end{example}

\subsection{Subdifferential and the Moreau envelope}\label{sec:subdiff_mor_env}
A key property of convex functions is that any subgradient yields a global affine under-estimator of the function. It is this availability of global under-estimators that enables convergence guarantees for nonsmooth convex optimization. An analogous property is true for weakly convex functions, with the caveat that the subdifferential is meant in a broader variational analytic sense and the affine under-estimators are replaced by concave quadratic under-estimators. We now formalize this observation.

Consider a function $\varphi\colon\R^d\to\R\cup\{\infty\}$ and a point $x\in \R^d$, with $\varphi(x)$ finite. The {\em subdifferential} of $\varphi$ at $x$, denoted $\partial \varphi(x)$, consists of all vectors $v$ satisfying 
\begin{equation*} 
\varphi(y)\geq \varphi( x)+\langle v,y- x\rangle +o(\|y- x\|)\qquad \textrm{ as }y\to  x.	
\end{equation*}
We set $\partial \varphi(x)=\emptyset$ for all $x\notin\dom \varphi$.
When $\varphi$ is $C^1$-smooth, the subdifferential $\partial \varphi(x)$ consists only of the gradient $\{\nabla \varphi(x)\}$, while for convex functions it reduces to the subdifferential in the sense of convex analysis. The following characterization of weak convexity is standard; we provide a short proof for completeness.

\begin{lemma}[Subdifferential characterization]\label{lem:char_subdiff} {\hfill \\ } The following are equivalent for any lower-semicontinuous function $\varphi\colon \R^d \to\R\cup\{\infty\}$.
	\begin{enumerate}
		\item\label{it:1} The function $f$ is $\rho$-weakly convex.
		\item\label{it:approx_secant} The approximate secant inequality holds:
 \begin{equation}\label{eqn:sec_ineq} 
 \varphi(\lambda x + (1-\lambda)y) \leq \lambda \varphi(x) + (1-\lambda)\varphi(y) + \tfrac{\rho\lambda(1-\lambda)}{2}\|x - y\|^2,
\end{equation}
for all $x,y\in\R^d$ and $\lambda\in [0,1]$.
		\item\label{it:2} The subgradient inequality holds: 
		\begin{equation}\label{eqn:subgrad_ineq}
		\varphi(y)\geq \varphi(x)+\langle v,y-x\rangle-\frac{\rho}{2} \|y-x\|^2,\qquad\forall x,y\in\R^d,~ v\in \partial \varphi(x).
		\end{equation}
		\item\label{it:hyp} The subdifferential map is hypomontone:
		\begin{equation*}\label{eqn:stronger_ineq_hypo}
\langle v - w,x - y\rangle \geq -\rho \|x-y\|^2,
\end{equation*}
for all $x,y\in \R^d$, $v\in \partial \varphi(x)$, and $w \in \partial \varphi(y)$.

	\end{enumerate}
			If $\varphi$ is $C^2$-smooth, then the four properties above are all equivalent to 
			\begin{equation*}
			\nabla^2 \varphi(x)\succeq -\rho I\qquad\qquad \forall x\in \R^d.	
			\end{equation*}
			\end{lemma}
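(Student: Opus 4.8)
The plan is to reduce the entire lemma to standard facts about the auxiliary function $g(x) := \varphi(x) + \frac{\rho}{2}\|x\|^2$. By definition, item~\ref{it:1} asserts exactly that $g$ is convex, so it suffices to show that items~\ref{it:approx_secant}, \ref{it:2}, and \ref{it:hyp} are, after elementary algebra, the three classical characterizations of convexity of a lower-semicontinuous function $g$: the secant (Jensen) inequality along segments, the global subgradient inequality, and monotonicity of the subdifferential. Their equivalence for convex functions is standard (see, e.g., \cite{RW98} and \cite{con_ter}), so the real work is purely a change of variables between $\varphi$ and $g$.

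The one structural input I would record first is the sum rule $\partial g(x) = \partial \varphi(x) + \rho x$, which holds because $g$ differs from $\varphi$ by the smooth function $\frac{\rho}{2}\|\cdot\|^2$; this lets me identify each $v \in \partial \varphi(x)$ with the subgradient $v + \rho x \in \partial g(x)$. Everything then hinges on two identities: $\lambda\|x\|^2 + (1-\lambda)\|y\|^2 - \|\lambda x + (1-\lambda)y\|^2 = \lambda(1-\lambda)\|x-y\|^2$ and $\rho\langle x, y-x\rangle + \frac{\rho}{2}\|x\|^2 - \frac{\rho}{2}\|y\|^2 = -\frac{\rho}{2}\|y-x\|^2$. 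For \ref{it:1}$\Leftrightarrow$\ref{it:approx_secant}, I expand the convexity inequality $g(\lambda x + (1-\lambda)y) \leq \lambda g(x) + (1-\lambda)g(y)$ and apply the first identity; the quadratic terms collapse precisely into the slack $\tfrac{\rho\lambda(1-\lambda)}{2}\|x-y\|^2$ appearing in \eqref{eqn:sec_ineq}. For \ref{it:1}$\Leftrightarrow$\ref{it:2}, I substitute $g$ together with the sum-rule representation $w = v + \rho x$ into the convex subgradient inequality $g(y) \geq g(x) + \langle w, y-x\rangle$, and the second identity converts it into \eqref{eqn:subgrad_ineq}. For \ref{it:1}$\Leftrightarrow$\ref{it:hyp}, I plug the sum-rule representations into the monotonicity inequality $\langle v' - w', x-y\rangle \geq 0$ for $\partial g$; the extra term $\rho\langle x-y, x-y\rangle$ produces exactly the $-\rho\|x-y\|^2$ on the right-hand side of item~\ref{it:hyp}.

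Finally, the $C^2$ addendum follows from $\nabla^2 g(x) = \nabla^2 \varphi(x) + \rho I$ together with the fact that a $C^2$ function is convex if and only if its Hessian is everywhere positive semidefinite. The only genuinely nonelementary step in the whole argument is the implication \ref{it:hyp}$\Rightarrow$\ref{it:1}, i.e.\ that monotonicity of the (limiting) subdifferential of $g$ forces $g$ to be convex; this is the classical variational-analytic characterization of convexity via monotonicity of the subdifferential, and I would simply cite it. Every other direction is bookkeeping with the two quadratic identities above.
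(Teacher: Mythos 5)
Your proposal is correct and follows essentially the same route as the paper: both reduce everything to convexity of $g=\varphi+\frac{\rho}{2}\|\cdot\|^2$ via the sum rule $\partial g(x)=\partial\varphi(x)+\rho x$, and both lean on the standard monotonicity-implies-convexity theorem (the paper cites \cite[Theorem 12.17]{RW98}) for the one non-elementary implication \ref{it:hyp}$\Rightarrow$\ref{it:1}. The only cosmetic difference is that the paper closes a cycle $\ref{it:2}\Rightarrow\ref{it:hyp}$ by adding the subgradient inequality to itself with $x$ and $y$ swapped, whereas you translate each item separately into the corresponding classical characterization of convexity of $g$.
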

	\begin{proof}
	Algebraic manipulation shows that the usual secant inequality on the function $\varphi+\frac{\rho}{2}\|\cdot\|^2$ is precisely the approximate secant inequality \eqref{eqn:sec_ineq} on $\varphi$. Therefore we deduce the equivalence $\ref{it:1}\Leftrightarrow\ref{it:approx_secant}$. Suppose now \ref{it:1} holds and define the function $g(x)=\varphi(x)+\frac{\rho}{2}\|x\|^2$. Note the equality $\partial g(x)=\partial \varphi(x)+\rho x$; see e.g. \cite[Exercise 8.8]{RW98}. Since $g$ is convex, the inequality, $g(y)\geq g(x)+\langle v+\rho x,y-x\rangle$, holds for all $x,y\in \R^d$ and $v\in \partial \varphi(x)$. Algebraic manipulations then immediately imply \eqref{eqn:subgrad_ineq}, and therefore \ref{it:2} holds. The implication $\ref{it:2}\Rightarrow \ref{it:hyp}$ follows by adding to \eqref{eqn:subgrad_ineq} the analogous inequality with $x$ and $y$ interchanged. Finally suppose that \ref{it:hyp} holds. Algebraic manipulations than imply that the subdifferential of $\varphi+\frac{\rho}{2}\|\cdot\|^2$ is a globally monotone map. Applying \cite[Theorem 12.17]{RW98}, we conclude that $\varphi+\frac{\rho}{2}\|\cdot\|^2$ is convex and therefore 
\ref{it:1} holds. Finally the characterization of weak convexity when $\varphi$	is $C^2$-smooth is immediate from the second-order characterization of convexity of the function $\varphi+\frac{\rho}{2}\|\cdot\|^2$.
	\end{proof}

For any function $\varphi\colon\R^d\to\R\cup\{\infty\}$ and $\lambda>0$, the {\em Moreau envelope} and the {\em proximal map} are defined by 
\begin{align}
\varphi_{\lambda}(x)&:=\min_{y}~ \left\{\varphi(y)+\tfrac{1}{2\lambda}\|y-x\|^2\right\},\label{eqn:moreau}\\
\prox_{\lambda \varphi}(x)&:=\argmin_{y}\, \left\{\varphi(y)+\tfrac{1}{2\lambda}\|y-x\|^2\right\},\notag
\end{align}
respectively. Classically, the Moreau envelope of a convex function is $C^1$-smooth for any $\lambda>0$; see \cite{MR0201952}. The same is true for weakly convex functions, provided $\lambda$ is sufficiently small.

\begin{lemma}\label{lem:moreau_grad} Consider a $\rho$-weakly convex function $\varphi\colon\R^d\to \R\cup\{\infty\}$. Then for any  $\lambda\in (0,\rho^{-1})$, the Moreau envelope $\varphi_{\lambda}$ is $C^1$-smooth with gradient given by
$$\nabla \varphi_{\lambda}(x)=\lambda^{-1}(x-\prox_{\lambda \varphi}(x)).$$
\end{lemma}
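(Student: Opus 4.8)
The plan is to reduce everything to the strongly convex inner minimization defining the envelope, rather than invoking the classical convex result as a black box. Write $F_x(y):=\varphi(y)+\tfrac{1}{2\lambda}\|y-x\|^2$. Since $\varphi$ is $\rho$-weakly convex, the function $y\mapsto\varphi(y)+\tfrac{\rho}{2}\|y\|^2$ is convex, and expanding $\tfrac{1}{2\lambda}\|y-x\|^2$ shows that $F_x(y)-\tfrac{1}{2}(\lambda^{-1}-\rho)\|y\|^2$ differs from this convex function by a term affine in $y$; hence $F_x$ is $(\lambda^{-1}-\rho)$-strongly convex precisely when $\lambda<\rho^{-1}$. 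Strong convexity forces coercivity, so together with lower semicontinuity of $\varphi$ it guarantees that $F_x$ has a unique minimizer, making $\prox_{\lambda\varphi}(x)$ a well-defined singleton for every $x$. Writing $\hat x:=\prox_{\lambda\varphi}(x)$, the optimality condition $0\in\partial F_x(\hat x)$ rearranges to $\lambda^{-1}(x-\hat x)\in\partial\varphi(\hat x)$.

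Next I would extract two-sided first-order estimates for $\varphi_\lambda$ by inserting feasible test points into the defining minimization. Using $\hat x$ as a candidate at the shifted point $x+h$ gives the upper bound $\varphi_\lambda(x+h)-\varphi_\lambda(x)\leq\langle\lambda^{-1}(x-\hat x),h\rangle+\tfrac{1}{2\lambda}\|h\|^2$, while using $\hat x_h:=\prox_{\lambda\varphi}(x+h)$ as a candidate at $x$ gives $\varphi_\lambda(x+h)-\varphi_\lambda(x)\geq\langle\lambda^{-1}(x-\hat x_h),h\rangle+\tfrac{1}{2\lambda}\|h\|^2$. Setting $v:=\lambda^{-1}(x-\hat x)$, these two inequalities sandwich the increment $\varphi_\lambda(x+h)-\varphi_\lambda(x)-\langle v,h\rangle$ between $\lambda^{-1}\langle\hat x-\hat x_h,h\rangle+\tfrac{1}{2\lambda}\|h\|^2$ and $\tfrac{1}{2\lambda}\|h\|^2$. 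The upper side is already $O(\|h\|^2)=o(\|h\|)$.

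The crux is to control the displacement $\|\hat x-\hat x_h\|$ as $h\to 0$, and for this I would invoke the hypomonotonicity of $\partial\varphi$ from Lemma~\ref{lem:char_subdiff}. Applying it to the subgradients $\lambda^{-1}(x-\hat x)\in\partial\varphi(\hat x)$ and $\lambda^{-1}(x+h-\hat x_h)\in\partial\varphi(\hat x_h)$ and simplifying the resulting inner product yields $(\lambda^{-1}-\rho)\|\hat x-\hat x_h\|^2\leq\lambda^{-1}\|h\|\,\|\hat x-\hat x_h\|$, hence the Lipschitz estimate $\|\hat x-\hat x_h\|\leq(1-\lambda\rho)^{-1}\|h\|$, where $1-\lambda\rho>0$ exactly because $\lambda<\rho^{-1}$. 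Consequently $\lambda^{-1}\langle\hat x-\hat x_h,h\rangle=O(\|h\|^2)$, so the lower side of the sandwich is also $o(\|h\|)$. This proves $\varphi_\lambda$ is differentiable at $x$ with $\nabla\varphi_\lambda(x)=v=\lambda^{-1}(x-\hat x)$, and continuity of $\nabla\varphi_\lambda$ follows at once from the same Lipschitz bound on $x\mapsto\prox_{\lambda\varphi}(x)$, giving $C^1$-smoothness.

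I expect the main obstacle to be this displacement bound $\|\hat x-\hat x_h\|\leq(1-\lambda\rho)^{-1}\|h\|$: without it, the lower estimate only traps the increment loosely and differentiability fails to follow. This is precisely where weak convexity is essential. In the purely convex case one would use nonexpansiveness of the prox, but here the hypomonotone inequality must absorb the $-\rho\|\cdot\|^2$ defect, and the absorption only closes — yielding a genuine contraction factor — when $\lambda^{-1}>\rho$, which is exactly the hypothesis $\lambda\in(0,\rho^{-1})$.
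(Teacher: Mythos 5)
Your proof is correct. Note first that the paper does not actually prove this lemma: it is stated as a standard fact with citations to Moreau and to Rockafellar, the implicit classical route being to transfer the convex case to the tilted function $\varphi+\tfrac{\rho}{2}\|\cdot\|^2$ and untangle how its envelope and proximal map relate to those of $\varphi$. Your argument is instead a direct, self-contained derivation: the strong-convexity computation for $F_x$, the two sandwich inequalities obtained by cross-substituting $\prox_{\lambda\varphi}(x)$ and $\prox_{\lambda\varphi}(x+h)$ as test points, and the Lipschitz bound $\|\hat x-\hat x_h\|\leq(1-\lambda\rho)^{-1}\|h\|$ extracted from the hypomonotonicity characterization in Lemma~\ref{lem:char_subdiff} all check out, and together they do yield differentiability with the stated gradient plus continuity (indeed Lipschitz continuity) of $\nabla\varphi_\lambda$. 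What your route buys over the citation is an explicit contraction constant for the proximal map and a transparent view of exactly where the hypothesis $\lambda<\rho^{-1}$ enters — namely in absorbing the $-\rho\|\cdot\|^2$ defect in the hypomonotone inequality. The only point to make explicit is the standing assumption that $\varphi$ is closed and proper (which the paper imposes globally but omits from the lemma statement); you correctly flag lower semicontinuity as the ingredient that, combined with strong convexity and hence coercivity of $F_x$, makes $\prox_{\lambda\varphi}(x)$ a well-defined singleton.
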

See Figure~\ref{fig:morea_env} for an illustration.

As mentioned in the introduction, the norm of the gradient $\|\nabla \varphi_{\lambda}(x)\|$ has an intuitive interpretation in terms of near-stationarity. Namely, the optimality conditions for the minimization problem in \eqref{eqn:moreau} directly imply that for any point $x\in\R^d$, the proximal point $\hat x:=\prox_{\lambda \varphi}(x)$ satisfies
\begin{equation*}
\left\{\begin{array}{cl}
\|\hat{x}-x\|&=  \lambda\|\nabla \varphi_{\lambda}(x)\|,\\ 
\varphi(\hat x) &\leq \varphi(x),\\
\dist(0;\partial \varphi(\hat{x}))&\leq \|\nabla \varphi_{\lambda}(x)\|.
\end{array}\right. 
\end{equation*}
Thus a small gradient $\|\nabla \varphi_{\lambda}(x)\|$ implies that $x$ is {\em near} some point $\hat x$ that is {\em nearly stationary} for $\varphi$; see Figure~\ref{fig:approx_stat}. All of the convergence guarantees that we present will be in terms of the quantity $\|\nabla \varphi_{\lambda}(x)\|$.

 It is important to keep in mind that in more classical circumstances, the size of the gradient of the Moreau envelope is proportional to more familiar quantities. To illustrate, consider the optimization problem 
\begin{equation}\label{eqn:add_comp}
\min_{x\in \R^d}~ \varphi(x):=f(x)+r(x)
\end{equation}
where $f\colon\R^d\to\R$ is $C^1$-smooth with $\rho$-Lipschitz gradient and $r\colon\R^d\to\R\cup\{\infty\}$ is closed and convex. Much of the literature \cite{nest_conv_comp,Ghadimi2016mini} focusing on this problem class highlights the role of the {\em prox-gradient mapping}: 
\begin{equation}\label{eqn:prox_grad_map}
\mathcal{G}_{\lambda}(x)=\lambda^{-1}\left(x-\prox_{\lambda r}(x-\lambda\nabla f(x))\right).
\end{equation}
Indeed, complexity estimates are typically stated in terms of the norm of the  prox-gradient mapping $\|\mathcal{G}_{1/\rho}(x)\|$. On the other hand, one can show that the two stationarity measures, $\|\nabla\varphi_{1/2\rho}(x)\|$ and $\|\mathcal{G}_{1/\rho}(x)\|$, are proportional  \cite[Theorem 4.5]{prox_error}:
$$\tfrac{1}{4}\|\nabla \varphi_{1/2\rho}(x)\|\leq\|\mathcal{G}_{1/\rho}(x)\|\leq \tfrac{3}{2}\left(1+\tfrac{1}{\sqrt{2}}\right)\|\nabla \varphi_{1/2\rho}(x)\|\qquad \forall x\in\R^d.$$
Thus when specializing our results to the  setting \eqref{eqn:add_comp}, 
 all of the convergence guarantees can be immediately translated in terms of the prox-gradient mapping.

\section{Proximal stochastic subgradient method}\label{sec:prox_stoc_subgrad}
In this section, we analyze the proximal stochastic subgradient method for weakly convex minimization. Throughout, we consider the optimization problem
\begin{equation}\label{eqn:gen_err}
\min_{x\in \R^d}~\varphi(x)=f(x)+r(x),
\end{equation}
where  $r\colon\R^d\to\R\cup\{+\infty\}$ is a closed convex function and $f\colon\R^d\to\R$ is a $\rho$-weakly convex function. 
We assume that the only access to $f$ is through a stochastic subgradient oracle.  
\begin{assumption}[Stochastic subgradient oracle]{\rm
Fix a probability space $(\Omega, \cF, P)$ and equip $\RR^d$ with the Borel $\sigma$-algebra.	
We make the following three  assumptions: 
\begin{enumerate}
	\item[(A1)]\label{it1} It is possible to generate i.i.d.\ realizations $\xi_1,\xi_2, \ldots \sim P$. 
	\item[(A2)]\label{it2} There is an open set $U$ containing $\dom r$ and a measurable mapping $G \colon U \times \Omega \rightarrow \RR^d$ satisfying  $\EE_{\xi}[G(x,\xi)]\in \partial f(x)$ for all $x\in U$.
	\item[(A3)]\label{it3} There is a real $L \geq 0$ such that the inequality, $\EE_\xi\left[ \|G(x, \xi)\|^2\right] \leq L^2$, holds for all $x \in \dom r$. 
\end{enumerate}}
\end{assumption}

The three assumption (A1), (A2), (A3) are standard in the literature on stochastic subgradient methods:  assumptions (A1) and (A2) are identical to assumptions (A1) and (A2) in~\cite{doi:10.1137/070704277}, while Assumption (A3) is the same as the assumption listed in~\cite[Equation~(2.5)]{doi:10.1137/070704277}. We will investigate the efficiency of the  proximal stochastic subgradient method, described in	Algorithm~\ref{alg:subgradient}.
\smallskip

\begin{algorithm}[H]
	{\bf Input:} $x_0 \in \dom r$, a sequence $\{\alpha_t\}_{t\geq 0}\subset\R_+$, and iteration count $T$
	
	{\bf Step } $t=0,\ldots,T$:\\		
	\begin{equation*}\left\{
	\begin{aligned}
	&\textrm{Sample } \xi_t \sim P\\
	& \textrm{Set } x_{t+1}=\prox_{\alpha_t r}\left(x_{t} - \alpha_t G(x_t, \xi_t)\right)
	\end{aligned}\right\},
	\end{equation*}
	Sample $t^*\in \{0,\ldots,T\}$ according to  
	$\mathbb{P}(t^*=t)=\frac{\alpha_t}{\textstyle \sum_{i=0}^T \alpha_i}.$
	
	{\bf Return} $x_{t^*}$		
	\caption{Proximal stochastic subgradient method
	}
	\label{alg:subgradient}
\end{algorithm}
\smallskip

Henceforth, the symbol $\mathbb{E}_{t}[\cdot]$ will denote the expectation conditioned on all the realizations $\xi_0,\xi_1,\ldots, \xi_{t-1}$.

\subsection{Projected stochastic subgradient method}\label{sec:project_subgrad}\hfill \\
Our analysis of Algorithm~\ref{alg:subgradient} is shorter and more transparent when $r$ is the indicator function of a closed, convex set $\cX$. This is not surprising, since projected subgradient methods are typically much easier to analyze than their proximal extensions (e.g. \cite{prox_subgrad_duchi,cruz_subgrad}). Note that (\ref{eqn:gen_err}) then reduces to the constrained problem
\begin{equation}\label{eqn:target_constr}
\min_{x\in\cX}~ f(x),
\end{equation}
and the proximal map $\prox_{\alpha  r}(\cdot)$ becomes the nearest point projection $\proj_{\cX}(\cdot)$. Thus throughout Section~\ref{sec:project_subgrad}, we suppose that Assumptions (A1), (A2), and (A3) hold and that $r(\cdot)$ is the indicator function of a closed convex set $\cX$. The following is the main result of this section.

\begin{thm}[Stochastic projected subgradient method]\label{thm:stochastic_sub}
	Let $x_{t^*}$ be the point returned by Algorithm~\ref{alg:subgradient}. 
	Then in terms of any constant  $\bar \rho>\rho$, the estimate holds:
	\begin{equation}\label{eqn:desc_1}
	\EE \left[\varphi_{1/\bar\rho}(x_{t+1})\right]\leq \EE[\varphi_{1/\bar\rho}(x_{t})]-\frac{\alpha_t(\bar \rho-\rho)}{\bar \rho} \EE\left[\|\nabla \varphi_{1/\bar \rho}(x_{t})\|^2\right]+ \frac{\alpha_t^2\bar\rho L^2}{2},
	\end{equation}
	and therefore we have
	\begin{equation}\label{eqn:recurse_tower_1}
	\EE \left[\|\nabla \varphi_{1/\bar \rho}(x_{t^*})\|^2\right]\leq \frac{\bar{\rho}}{\bar \rho-\rho}\cdot\frac{(\varphi_{1/\bar \rho}(x_0) - \min \varphi )+ \frac{\bar\rho L^2}{2}\sum_{t=0}^T \alpha_t^2}{\sum_{t=0}^T \alpha_t}.
		\end{equation} 
	In particular, if Algorithm~\ref{alg:subgradient}  uses the constant parameter $\alpha_t=\frac{\gamma}{\sqrt{T+1}}$, for some real $\gamma>0$, then the point $x_{t^*}$ satisfies:
	\begin{equation}\label{eqn:compl_proj}
	\EE\left[\|\nabla \varphi_{1/2\rho }(x_{t^*})\|^2\right]
	\leq 2\cdot\frac{\left(\varphi_{1/2\rho}(x_0) - \min \varphi\right)+ \rho L^2\gamma^2}{\gamma\sqrt{T+1}}.
	\end{equation} 

\end{thm}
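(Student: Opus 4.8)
The plan is to use the Moreau envelope $\varphi_{1/\bar\rho}$ as a potential function and to show that a single projected-subgradient step decreases it in expectation. Throughout, abbreviate $\lambda=1/\bar\rho$ and set $\hat x_t:=\prox_{\lambda\varphi}(x_t)$. Since $\bar\rho>\rho$ we have $\lambda\in(0,\rho^{-1})$, so Lemma~\ref{lem:moreau_grad} applies and yields $\nabla\varphi_\lambda(x_t)=\lambda^{-1}(x_t-\hat x_t)$, equivalently $\|x_t-\hat x_t\|^2=\lambda^2\|\nabla\varphi_\lambda(x_t)\|^2$. Because $\varphi=f+r$ takes the value $+\infty$ off $\cX$, the proximal point satisfies $\hat x_t\in\cX$, a fact I will use to invoke nonexpansiveness of the projection.

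The core of the argument is the one-step bound \eqref{eqn:desc_1}. First, plugging the feasible point $y=\hat x_t$ into the minimization defining $\varphi_\lambda(x_{t+1})$ gives $\varphi_\lambda(x_{t+1})\leq f(\hat x_t)+\frac{1}{2\lambda}\|\hat x_t-x_{t+1}\|^2$. Next, writing $x_{t+1}=\proj_\cX(x_t-\alpha_t G(x_t,\xi_t))$ and using $\hat x_t=\proj_\cX(\hat x_t)$ together with nonexpansiveness of $\proj_\cX$, I expand
\[
\|x_{t+1}-\hat x_t\|^2\leq \|x_t-\hat x_t\|^2-2\alpha_t\langle G(x_t,\xi_t),x_t-\hat x_t\rangle+\alpha_t^2\|G(x_t,\xi_t)\|^2.
\]
Taking the conditional expectation $\EE_t$, Assumption (A2) replaces $\EE_t[G(x_t,\xi_t)]$ by some $g_t\in\partial f(x_t)$ and (A3) bounds the last term by $\alpha_t^2 L^2$. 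I then call on weak convexity in two distinct ways: the subgradient inequality \eqref{eqn:subgrad_ineq} for $g_t\in\partial f(x_t)$ controls the inner-product term, namely $\langle g_t,x_t-\hat x_t\rangle\geq f(x_t)-f(\hat x_t)-\frac{\rho}{2}\|x_t-\hat x_t\|^2$, while the $(\lambda^{-1}-\rho)$-strong convexity of the proximal subproblem (i.e. optimality of $\hat x_t$) yields the sharp lower bound $f(x_t)-f(\hat x_t)\geq(\lambda^{-1}-\tfrac{\rho}{2})\|x_t-\hat x_t\|^2$.

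Combining these, recognizing that $f(\hat x_t)+\frac{1}{2\lambda}\|x_t-\hat x_t\|^2=\varphi_\lambda(x_t)$, and substituting $\|x_t-\hat x_t\|^2=\lambda^2\|\nabla\varphi_\lambda(x_t)\|^2$, all coefficients collapse to give
\[
\EE_t[\varphi_\lambda(x_{t+1})]\leq \varphi_\lambda(x_t)-\alpha_t(1-\rho\lambda)\|\nabla\varphi_\lambda(x_t)\|^2+\frac{\alpha_t^2 L^2}{2\lambda},
\]
which, with $\lambda=1/\bar\rho$, is exactly \eqref{eqn:desc_1} after the tower rule. The remaining estimates are then routine bookkeeping: rearranging \eqref{eqn:desc_1}, summing over $t=0,\dots,T$, telescoping the envelope values, and using $\EE[\varphi_\lambda(x_{T+1})]\geq\min\varphi_\lambda\geq\min\varphi$ bounds $\frac{\bar\rho-\rho}{\bar\rho}\sum_t\alpha_t\EE\|\nabla\varphi_\lambda(x_t)\|^2$. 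Dividing by $\sum_t\alpha_t$ and noting that the randomized index $t^*$ makes $\EE\|\nabla\varphi_\lambda(x_{t^*})\|^2$ equal to the $\alpha_t$-weighted average of the terms yields \eqref{eqn:recurse_tower_1}; substituting $\bar\rho=2\rho$ and $\alpha_t=\gamma/\sqrt{T+1}$ (so that $\sum_t\alpha_t=\gamma\sqrt{T+1}$ and $\sum_t\alpha_t^2=\gamma^2$) then gives \eqref{eqn:compl_proj}.

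I expect the only delicate point to be obtaining the sharp constant $(1-\rho\lambda)$ in the descent inequality, which is precisely what makes the final rate come out cleanly. A naive estimate of $f(x_t)-f(\hat x_t)$ from the defining inequality of the prox (using only $\varphi(\hat x_t)+\frac{1}{2\lambda}\|\hat x_t-x_t\|^2\leq\varphi(x_t)$) loses a factor of two in the gradient term; it is the strong-convexity refinement above that recovers the stated coefficient $\frac{\alpha_t(\bar\rho-\rho)}{\bar\rho}$. Everything else reduces to manipulations with conditional expectations and the definition of $t^*$.
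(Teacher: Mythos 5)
Your proposal is correct and follows essentially the same route as the paper's proof: bound $\varphi_{1/\bar\rho}(x_{t+1})$ by evaluating the prox objective at $\hat x_t$, use nonexpansiveness of $\proj_{\cX}$ (noting $\hat x_t\in\cX$), control the cross term via the weak-convexity subgradient inequality, and recover the sharp coefficient $(\bar\rho-\rho)/\bar\rho$ from the $(\bar\rho-\rho)$-strong convexity of the proximal subproblem, exactly as in the paper. The remaining telescoping, the identification of the $\alpha_t$-weighted average with $\EE\|\nabla\varphi_{1/\bar\rho}(x_{t^*})\|^2$, and the specialization $\bar\rho=2\rho$, $\alpha_t=\gamma/\sqrt{T+1}$ also match.
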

\begin{proof}
	Let $x_{t}$ denote the points generates by Algorithm~\ref{alg:subgradient}. For each index $t$, define $v_t := \EE_t[G(x_t, \xi)] \in \partial f(x_t)$ and set $\hat x_t:= \prox_{\varphi/\bar\rho}(x_t)$. We successively deduce
	\begin{align}
	\EE_t \left[\varphi_{1/\bar\rho}(x_{t+1})\right] &\leq \EE_t \left[f(\hat x_t) + \frac{\bar\rho}{2} \| \hat x_t-x_{t+1}\|^2\right] \label{eqn:prox_def} \\
	&= f(\hat x_t) + \frac{\bar\rho}{2}\EE_t \left[\|\proj_{\cX}(x_{t} - \alpha_t G(x_t, \xi_t)) -  \proj_{\cX}(\hat x_t)\|^2\right] \notag\\
	&\leq f(\hat x_t) + \frac{\bar \rho}{2}\EE_t \left[\|(x_{t}  - \hat x_t)- \alpha_t G(x_t, \xi_t)\|^2 \right]\label{eqn:nonexp_proof1}\\
	&\leq f(\hat x_t) + \frac{\bar\rho}{2} \|x_{t} - \hat x_t\|^2 + \bar\rho\alpha_t\EE_t \left[\dotp{\hat x_t-x_t , G(x_t, \xi_t)}\right] + \frac{\alpha_t^2\bar\rho L^2}{2}\notag \\
	&\leq \varphi_{1/\bar\rho}(x_{t})+ \bar\rho \alpha_t \dotp{\hat x_t - x_t , v_t } + \frac{\alpha_t^2\bar\rho L^2}{2} \notag\\
	&\leq \varphi_{1/\bar\rho}(x_{t}) + \bar\rho \alpha_t \left(f(\hat x_t)-f(x_t)+\frac{\rho}{2}\|x_t-\hat x_t\|^2\right) + \frac{\alpha_t^2\bar\rho L^2}{2}, \label{eqn:weak_conv_proof}
	\end{align}
	where \eqref{eqn:prox_def} follows directly from the definition of the proximal map,  \eqref{eqn:nonexp_proof1} uses that the projection $\proj_{\cX}(\cdot)$ is $1$-Lipschitz, and \eqref{eqn:weak_conv_proof} follows from \eqref{eqn:subgrad_ineq}.
	
		Next, observe that the function $x\mapsto f(x)+\frac{\bar \rho}{2}\|x-x_{t}\|^2$ is  strongly convex with parameter $\bar\rho-\rho$, and therefore
	\begin{align*}
	f(x_{t}) - f(\hat x_{t}) - \frac{\rho}{2}\|x_{t} - \hat x_{t}\|^2
	&=
	\left(f(x_{t})+\frac{\bar\rho}{2}\|x_{t} - x_{t}\|^2\right) - \left( f(\hat x_{t}) + \frac{\bar\rho}{2}\|x_{t} - \hat x_{t}\|^2\right) \\
	&\hspace{20pt}+ \frac{\bar \rho - \rho}{2}\|x_{t} - \hat x_{t}\|^2\\
	&\geq (\bar \rho-\rho)\|x_{t} - \hat x_{t}\|^2 = \frac{\bar \rho-\rho}{\bar\rho^2}\|\nabla \varphi_{1/\bar \rho}(x_{t})\|^2,
	\end{align*}
	where the last equality follows from Lemma~\ref{lem:moreau_grad}.
	Thus we deduce
	$$\EE_t \left[\varphi_{1/\bar\rho}(x_{t+1})\right]\leq \varphi_{1/\bar\rho}(x_{t}) -  \frac{\alpha_t (\bar \rho-\rho)}{\bar\rho}\|\nabla \varphi_{1/\bar \rho}(x_{t})\|^2 + \frac{\alpha_t^2\bar\rho L^2}{2}.$$
	Taking expectations of both sides with respect to $\xi_0,\xi_1,\ldots, \xi_{t-1}$, and using the law of total expectation yields the claimed inequality \eqref{eqn:desc_1}

	Unfolding the recursion \eqref{eqn:desc_1} yields: 
	\begin{align*}
	\EE \left[\varphi_{1/\bar\rho}(x_{T+1})\right] &\leq \varphi_{1/\bar\rho}(x_{0}) + \frac{\bar\rho L^2}{2}\sum_{t = 0}^T \alpha_t^2- \frac{\bar\rho-\rho}{\bar \rho} \cdot\sum_{t = 0}^T\alpha_t  \EE\left[\|\nabla \varphi_{1/\bar \rho}(x_{t})\|^2\right].
	\end{align*} 
	Lower-bounding the left-hand side by $\min \varphi$ and rearranging, we obtain the bound: 
	\begin{equation*}
	\begin{aligned}
	\frac{1}{\sum_{t=0}^T \alpha_t}\sum_{t=0}^T \alpha_t &\EE\left[\|\nabla \varphi_{1/\bar \rho}(x_{t})\|^2\right]\leq \frac{\bar{\rho}}{\bar \rho-\rho}\cdot\frac{\varphi_{1/\bar\rho}(x_{0}) - \min \varphi +  \frac{\bar\rho L^2}{2}\sum_{t=0}^T \alpha_t^2}{\bar\rho\sum_{t=0}^T \alpha_t}.
	\end{aligned}
	\end{equation*}
	Notice that the left-hand-side  is precisely  the expectation $\EE \left[\|\nabla \varphi_{1/\bar \rho}(x_{t^*})\|^2\right]$. Thus \eqref{eqn:recurse_tower_1} holds, as claimed. Finally, \eqref{eqn:compl_proj} follows from \eqref{eqn:recurse_tower_1} by setting $\bar \rho=2\rho$ and $\alpha_t=\frac{\gamma}{\sqrt{T+1}}$ for all indices $t=0,1,\ldots,T$.
\end{proof}

Let us translate the estimate~\eqref{eqn:compl_proj} into a complexity bound by minimizing out in $\gamma$. Namely, suppose we have available some real $ \Delta>0$
satisfying $\Delta\geq \varphi_{1/(2\rho)}(x_0) - \min \varphi$. We deduce from \eqref{eqn:compl_proj} the estimate,
$\EE\left[\|\nabla \varphi_{1/(2\rho) }(x_{t^*})\|^2\right]
\leq 2\cdot\frac{\Delta+ \rho L^2\gamma^2}{\gamma\sqrt{T+1}}.$
Minimizing the right-hand side in $\gamma$ yields the choice
$\gamma=\sqrt{\frac{\Delta}{\rho L^2}}$ 
and therefore the guarantee
\begin{equation}\label{eqn:comp_bound_optim_proj}
\EE\left[\|\nabla \varphi_{1/(2\rho) }(x_{t^*})\|^2\right]\leq 4\cdot\sqrt{\frac{\rho \Delta L^2}{T+1}}.
\end{equation}
In particular, suppose that $f$ is $L$-Lipschitz and the diameter of $\cX$ is bounded by some $D>0$. Then we may set $\Delta:=  \min \left\{ \rho D^2, DL\right\}$, where the first term follows from the definition of the Moreau envelope and the second follows from Lipschitz continuity. 
Then the  number of subgradient evaluations required to find a point $x$ satisfying 
$\EE\|\nabla \varphi_{1/(2\rho) }(x)\|\leq \varepsilon$ is at most 
\begin{equation}\label{eqn:comple1}
\left\lceil 16\cdot\frac{(\rho L D)^2\cdot\min\left\{1,\tfrac{L}{\rho D}\right\}}{\varepsilon^4}\right\rceil.
\end{equation}
This complexity in $\varepsilon$ matches the guarantees of the stochastic gradient method for finding an $\varepsilon$-stationary point of a smooth function \cite[Corollary 2.2]{ghad}.

\subsection*{Improved complexity under convexity} It is intriguing to ask if the complexity \eqref{eqn:comple1} can be improved when $f$ is a convex function. The answer, unsurprisingly, is yes. Since $f$ is convex, here and for the rest of the section, we will let the constant $\rho>0$ be arbitrary. As a first attempt, one may follow the observation of Nesterov \cite{nest_optima} for smooth minimization. The idea is that the right-hand-side of the complexity bound \eqref{eqn:comp_bound_optim_proj} depends on the initial gap $\varphi(x_0)-\min \varphi$. We can make this quantity as small as we wish by a separate subgradient method.
Namely, we may simply run a subgradient method for $T$ iterations to decrease the gap $\varphi(x_0)-\min \varphi$ to $\Delta:=LD/\sqrt{T+1}$; see for example \cite[Proposition 5.5]{nem_jud} for  this basic guarantee. Then we run another round of a subgradient method for $T$ iterations using the optimal choice $\gamma:=\sqrt{\frac{\Delta}{\rho L^2}}$. A quick computation shows that the resulting two-round scheme will find a point $x$ satisfying $\EE\|\nabla\varphi_{1/(2\rho)}(x)\|\leq \varepsilon$ after at most $O(1)\cdot\frac{L^2(\rho D)^{2/3}}{\varepsilon^{8/3}}$ iterations.

By following a completely different technique, introduced by Allen-Zhu \cite{makegradsmall_zhu} for smooth stochastic minimization, this complexity can be even further improved to $\widetilde{O}\left(\frac{(L^2+\rho^2D^2)\log^3(\frac{\rho D}{\varepsilon})}{\varepsilon^2}\right)$ by running logarithmically many rounds of the subgradient method on quadratically regularized problems.  Since this procedure and its analysis is somewhat long and is independent of the rest of the material, we have placed it in an independent arXiv technical report \cite{conv_impr_rate_subgrad}.

\subsection{Proximal stochastic subgradient method}\label{sec:proxmeth}
We next move on to convergence guarantees of Algorithm~\ref{alg:subgradient} in full generality. An important consequence we discuss at the end of the section is a convergence guarantee for the stochastic proximal gradient method 
for minimizing a sum of a smooth function and a convex function, where the gradient oracle has bounded variance (instead of bounded second moment). Those not interested in this guarantee can in principle skip to Section~\ref{sec:set_up_env}, which details our most general convergence result for nonsmooth minimization.

Before we proceed, let us note that for any $x\in U$ and $v\in \partial f(x)$, we have  $\|v\|\leq L$.
To see this, observe that  (A2) and (A3) directly imply that whenever $f$ is differentiable at $x\in U$, we have $$\|\nabla f(x)\|^2=\left\|\EE_{\xi} [G(x,\xi)]\right\|^2\leq  \EE_{\xi} [\left\|G(x,\xi)\right\|^2]\leq L^2.$$
Since at any point $x$, the subdifferential $\partial f(x)$ is the convex hull of limits of gradients at nearby points \cite[Theorem 25.6]{rock}, the claim follows. We will use this estimate in the proof of Lemma~\ref{lem:descent}.

We break up the analysis of Algorithm~\ref{alg:subgradient} into two lemmas. Henceforth, fix a real  $\bar \rho>\rho$. Let $x_t$  be the iterates produced by Algorithm~\ref{alg:subgradient} and let $\xi_t\sim P$ be the i.i.d. realizations used. For each index $t$, define $v_t := \EE_t[G(x_t, \xi)] \in \partial f(x_t)$ and set $\hat x_t:= \prox_{\varphi/\bar\rho}(x_t)$. Observe that by the optimality conditions of the proximal map and the subdifferential sum rule \cite[Exercise 10.10]{RW98}, there exists a vector $\hat v_t \in \partial f(\hat x_t)$ satisfying $\bar \rho(x_t - \hat x_t)  \in \partial r(\hat x_t) +\hat  v_t$. The following lemma realizes $\hat x_t$ as a proximal point of $r$.

\begin{lem}\label{lem:prox_ident}
	For each index $t\geq 0$, equality holds:
	$$
	\hat x_t = \prox_{\alpha_t r}\left(\alpha_t \bar \rho x_t - \alpha_t\hat v_t + (1-\alpha_t \bar \rho)\hat x_t\right).$$
\end{lem}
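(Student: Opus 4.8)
The plan is to verify the defining first-order inclusion for the proximal map directly, rather than re-solving any minimization problem. Since $r$ is closed and convex, the subproblem $\min_y\{r(y)+\tfrac{1}{2\alpha_t}\|y-z\|^2\}$ is strongly convex, so $\prox_{\alpha_t r}$ is single-valued and a point $w$ equals $\prox_{\alpha_t r}(z)$ if and only if it satisfies the optimality inclusion $z - w \in \alpha_t\, \partial r(w)$. Consequently, it suffices to set $w = \hat x_t$ and $z = \alpha_t \bar\rho x_t - \alpha_t \hat v_t + (1 - \alpha_t \bar\rho)\hat x_t$ and check that $z - \hat x_t \in \alpha_t\, \partial r(\hat x_t)$; no separate existence or uniqueness argument is then required.

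First I would simplify $z - \hat x_t$. A direct computation gives
$$z - \hat x_t = \alpha_t \bar\rho x_t - \alpha_t \hat v_t + (1 - \alpha_t \bar\rho)\hat x_t - \hat x_t = \alpha_t\big(\bar\rho(x_t - \hat x_t) - \hat v_t\big),$$
so that the required inclusion collapses to $\bar\rho(x_t - \hat x_t) - \hat v_t \in \partial r(\hat x_t)$.

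This inclusion is precisely what has already been recorded in the paragraph preceding the lemma: the optimality conditions for $\hat x_t = \prox_{\varphi/\bar\rho}(x_t)$, combined with the subdifferential sum rule, produce a selection $\hat v_t \in \partial f(\hat x_t)$ satisfying $\bar\rho(x_t - \hat x_t) \in \partial r(\hat x_t) + \hat v_t$, which is the same as $\bar\rho(x_t - \hat x_t) - \hat v_t \in \partial r(\hat x_t)$. Substituting this into the computation above closes the argument.

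There is essentially no hard step here; the content is bookkeeping. The only point deserving care is the appeal to single-valuedness and the inclusion characterization of the convex prox, which is what licenses identifying $\hat x_t$ as the \emph{unique} proximal point merely from verifying the inclusion at the candidate. One should also note that the scalar $(1 - \alpha_t\bar\rho)$ may be of either sign depending on $\alpha_t\bar\rho$, but this is irrelevant since the argument rests only on the algebraic identity for $z - \hat x_t$ displayed above.
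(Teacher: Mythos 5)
Your proof is correct and is essentially identical to the paper's: both arguments rearrange the inclusion $\bar\rho(x_t-\hat x_t)\in\partial r(\hat x_t)+\hat v_t$ from the preceding paragraph into the first-order optimality condition characterizing $\prox_{\alpha_t r}$ at the stated argument. The only difference is cosmetic (you verify the inclusion at the candidate point rather than chaining equivalences forward), so nothing further is needed.
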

\begin{proof}
	By the definition of $\hat v_t$, we have 
	\begin{align*}
	\alpha_t \bar \rho(x_t - \hat x_t)  \in \alpha_t\partial r(\hat x_t) + \alpha_t\hat v_t
	\iff &\alpha_t \bar \rho x_t - \alpha_t\hat v_t + (1-\alpha_t \bar \rho)\hat x_t  \in \hat x_t+\alpha_t\partial r(\hat x_t) \\
	\iff &\hat x_t = \prox_{\alpha_t r}(\alpha_t \bar \rho x_t - \alpha_t\hat v_t + (1-\alpha_t \bar \rho)\hat x_t),
	\end{align*}
	where the last equivalence follows from the optimality conditions for the proximal subproblem. This completes the proof. 
\end{proof}

The next lemma establishes a crucial descent property for the iterates. 
\begin{lem}\label{lem:descent}
	Suppose $\bar\rho \in (\rho,2\rho]$ and we have $\alpha_t\in (0,1/\bar\rho]$  for all indices $t\geq 0$. Then the inequality holds:
	\begin{align*}
	\EE_t\|x_{t+1} - \hat x_t\|^2 
		&\leq  \|x_t - \hat x_t \|^2  + 4\alpha_t^2 L^2 - 2\alpha_t(\bar  \rho - \rho)\|x_t - \hat x_t\|^2.
	\end{align*}
\end{lem}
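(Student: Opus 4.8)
The plan is to exploit the key structural fact, supplied by Lemma~\ref{lem:prox_ident}, that both $x_{t+1}$ and $\hat x_t$ are images of the \emph{single} nonexpansive map $\prox_{\alpha_t r}$. First I would invoke the $1$-Lipschitz continuity (nonexpansiveness) of the proximal operator of the convex function $r$ to write
\begin{equation*}
\|x_{t+1}-\hat x_t\| \le \bigl\|\bigl(x_t-\alpha_t G(x_t,\xi_t)\bigr) - \bigl(\alpha_t\bar\rho x_t - \alpha_t\hat v_t + (1-\alpha_t\bar\rho)\hat x_t\bigr)\bigr\|.
\end{equation*}
The bracketed difference simplifies neatly: collecting the $x_t$ and $\hat x_t$ contributions yields exactly $(1-\alpha_t\bar\rho)(x_t-\hat x_t) - \alpha_t\bigl(G(x_t,\xi_t)-\hat v_t\bigr)$.

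Next I would square both sides, expand the inner product, and take the conditional expectation $\EE_t$. Since $x_t$, and hence $\hat x_t$ and the selection $\hat v_t\in\partial f(\hat x_t)$, are all deterministic given the past, the cross term becomes $-2\alpha_t(1-\alpha_t\bar\rho)\langle x_t-\hat x_t,\, v_t-\hat v_t\rangle$, using $\EE_t[G(x_t,\xi_t)]=v_t$. For the quadratic term I would use the crude split $\|G-\hat v_t\|^2 \le 2\|G\|^2+2\|\hat v_t\|^2$ together with Assumption (A3) and the observation recorded just before Lemma~\ref{lem:prox_ident}, that every subgradient of $f$ on $U$ has norm at most $L$ (applied at $\hat x_t\in\dom r\subseteq U$); this gives $\EE_t\|G-\hat v_t\|^2\le 4L^2$, producing the $4\alpha_t^2L^2$ term.

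The crucial step is to control the cross term via weak convexity. Because $\alpha_t\le 1/\bar\rho$, the factor $1-\alpha_t\bar\rho$ is nonnegative, so what I need is a \emph{lower} bound on $\langle x_t-\hat x_t,\, v_t-\hat v_t\rangle$ — and this is precisely the hypomonotonicity inequality of Lemma~\ref{lem:char_subdiff}, item~\ref{it:hyp}, namely $\langle v_t-\hat v_t,\, x_t-\hat x_t\rangle \ge -\rho\|x_t-\hat x_t\|^2$. Substituting it, the coefficient multiplying $\|x_t-\hat x_t\|^2$ collapses to $(1-\alpha_t\bar\rho)^2 + 2\alpha_t\rho(1-\alpha_t\bar\rho) = 1 - 2\alpha_t(\bar\rho-\rho) + \alpha_t^2\bar\rho(\bar\rho-2\rho)$. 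Here is where the standing hypothesis $\bar\rho\le 2\rho$ earns its keep: the last term is nonpositive and may simply be discarded, leaving $1-2\alpha_t(\bar\rho-\rho)$ and hence the claimed inequality. The only genuine subtlety is bookkeeping the signs so that nonexpansiveness, the nonnegativity of $1-\alpha_t\bar\rho$, and the hypomonotone lower bound all point in the same (upper-bounding) direction; once they are aligned, the constraint $\bar\rho\le 2\rho$ does the rest.
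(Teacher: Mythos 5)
Your proposal is correct and follows essentially the same route as the paper's proof: the same reduction via Lemma~\ref{lem:prox_ident} and nonexpansiveness of $\prox_{\alpha_t r}$, the same $4\alpha_t^2L^2$ bound on the quadratic term using $\|\hat v_t\|\le L$, the same use of hypomonotonicity on the cross term, and the same discarding of the nonpositive $\alpha_t^2\bar\rho(\bar\rho-2\rho)$ term. No gaps.
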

\begin{proof}
	Set $\delta := 1-\alpha_t\bar\rho$. We successively deduce
	\begin{align}
	\EE_t\|x_{t+1} - \hat x_t\|^2&=\EE_t\|\prox_{\alpha_t r} (x_t- \alpha_t G(x_t,\xi_t)) -  \prox_{\alpha_t r}(\alpha_t \bar \rho x_t - \alpha_t\hat v_t + \delta\hat x_t)\|^2\notag\\
	&\leq \EE_t\|x_t - \alpha_t G(x_t, \xi_t) - (\alpha_t \bar \rho x_t - \alpha_t\hat v_t + \delta\hat x_t)\|^2\label{eqn:non_expproof_2} \\
	&= \EE_t\|\delta(x_t - \hat x_t) - \alpha_t( G(x_t, \xi_t) - \hat v_t)\|^2\label{eqn:est_need_later}\\
	&= \delta^2\|x_t - \hat x_t \|^2 - 2\delta\alpha_t\EE_t\left[\dotp{x_t - \hat x_t, G(x_t, \xi_t) - \hat v_t}\right] \notag\\
	&\hspace{20pt}+ \alpha_t^2 \EE_t\| G(x_t, \xi_t) - \hat v_t\|^2\notag \\
	&= \delta^2\|x_t - \hat x_t \|^2 - 2\delta\alpha_t\dotp{x_t - \hat x_t, v_t - \hat v_t} +4\alpha_t^2 L^2 \cut{\alpha_t^2 \EE_t\| G(x_t, \xi_t) - \hat v_t\|^2} \notag\\
	&\leq \delta^2\|x_t - \hat x_t \|^2 + 2\delta\alpha_t\rho \|x_t - \hat x_t\|^2 + 4\alpha_t^2 L^2\label{eqn:hypomon}  \\
	&= (1 - (2\alpha_t(\bar  \rho - \rho)  + \alpha_t^2\bar \rho( 2 \rho-\bar \rho)))\|x_t - \hat x_t \|^2  + 4\alpha_t^2 L^2\cut{ - (2\alpha_t(\bar  \rho - \rho)  + \alpha_t^2\bar \rho( 2 \rho-\bar \rho))\|x_t - \hat x_t\|^2},\notag
	\end{align}
	where the first equation follows from Lemma~\ref{lem:prox_ident},  \eqref{eqn:non_expproof_2} uses that $\prox_{\alpha_t r}(\cdot)$ is $1$-Lipschitz~\cite[Proposition 12.19]{RW98}, and 
	\eqref{eqn:hypomon} follows from  \eqref{eqn:stronger_ineq_hypo}. The result now follows from the assumed inequality $\bar \rho \leq 2\rho$. 
\end{proof}

With Lemma~\ref{lem:descent} proved, we can now establish convergence guarantees of Algorithm~\ref{alg:subgradient} in full generality. 

\begin{thm}[Stochastic proximal subgradient method]\label{thm:stochastic_sub2}
	Fix a real $\bar \rho\in (\rho,2\rho]$ and a
	stepsize sequence $\alpha_t\in (0,1/\bar\rho]$. Then the iterates $x_t$ generated by Algorithm~\ref{alg:subgradient} satisfy
	\begin{equation}\label{eqn:desc_2}
	\EE \left[\varphi_{1/\bar\rho}(x_{t+1})\right]\leq \EE[\varphi_{1/\bar\rho}(x_{t})]-\frac{\alpha_t(\bar \rho-\rho)}{\bar \rho} \EE\left[\|\nabla \varphi_{1/\bar \rho}(x_{t})\|^2\right]+ \alpha_t^2\bar\rho L^2,
	\end{equation}
	and the point $x_{t^*}$ returned by Algorithm~\ref{alg:subgradient} satisfies:
	\begin{equation}\label{eqn:recurse_tower_2}
	\EE \left[\|\nabla \varphi_{1/\bar \rho}(x_{t^*})\|^2\right]\leq \frac{\bar{\rho}}{\bar \rho-\rho}\cdot\frac{(\varphi_{1/\bar \rho}(x_0) - \min \varphi) + 2\bar\rho L^2\sum_{t=0}^T \alpha_t^2}{\sum_{t=0}^T \alpha_t}.
	\end{equation} 
	In particular, if Algorithm~\ref{alg:subgradient}  uses the constant parameter $\alpha_t=\frac{\gamma}{\sqrt{T+1}}$, for some real $\gamma\in (0,\frac{1}{2\rho}]$, then the point $x_{t^*}$ satisfies:
	\begin{equation}\label{eqn:compl_proj2}
	\EE\left[\|\nabla \varphi_{1/2\rho }(x_{t^*})\|^2\right]\leq 2\cdot\frac{\left(\varphi_{1/2 \rho}(x_0) - \min \varphi\right)+ 4\rho L^2\gamma^2}{\gamma\sqrt{T+1}}.
	\end{equation} 
\end{thm}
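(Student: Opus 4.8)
The plan is to establish the per-iteration descent inequality \eqref{eqn:desc_2} by relating the change in the Moreau envelope value to the change in the squared distance $\|x_t - \hat x_t\|^2$, and then to unfold the recursion exactly as in the proof of Theorem~\ref{thm:stochastic_sub}. The central tool is Lemma~\ref{lem:descent}, which is stated precisely so that it applies under the hypotheses $\bar\rho\in(\rho,2\rho]$ and $\alpha_t\in(0,1/\bar\rho]$ of the theorem.

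\medskip

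First I would observe that, by the definition of the Moreau envelope \eqref{eqn:moreau} and the choice $\hat x_t = \prox_{\varphi/\bar\rho}(x_t)$, we have for each index $t$ the bound
\begin{equation*}
\EE_t\left[\varphi_{1/\bar\rho}(x_{t+1})\right] \leq \EE_t\left[\varphi(\hat x_t) + \tfrac{\bar\rho}{2}\|\hat x_t - x_{t+1}\|^2\right]
= \varphi_{1/\bar\rho}(x_t) + \tfrac{\bar\rho}{2}\left(\EE_t\|x_{t+1} - \hat x_t\|^2 - \|x_t - \hat x_t\|^2\right),
\end{equation*}
where the inequality uses that $\varphi_{1/\bar\rho}(x_{t+1})$ is the infimal value while $\hat x_t$ is merely feasible, and the equality uses that $\varphi_{1/\bar\rho}(x_t) = \varphi(\hat x_t) + \tfrac{\bar\rho}{2}\|x_t - \hat x_t\|^2$ by definition of $\hat x_t$. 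Plugging the estimate of Lemma~\ref{lem:descent} into the parenthesized term yields
\begin{equation*}
\EE_t\left[\varphi_{1/\bar\rho}(x_{t+1})\right] \leq \varphi_{1/\bar\rho}(x_t) + \tfrac{\bar\rho}{2}\left(4\alpha_t^2 L^2 - 2\alpha_t(\bar\rho-\rho)\|x_t-\hat x_t\|^2\right).
\end{equation*}
Then I would rewrite $\|x_t - \hat x_t\|^2 = \bar\rho^{-2}\|\nabla\varphi_{1/\bar\rho}(x_t)\|^2$ using Lemma~\ref{lem:moreau_grad}, which collapses the factor $\tfrac{\bar\rho}{2}\cdot 2\alpha_t(\bar\rho-\rho)\bar\rho^{-2}$ to $\tfrac{\alpha_t(\bar\rho-\rho)}{\bar\rho}$, and observe $\tfrac{\bar\rho}{2}\cdot 4\alpha_t^2 L^2 = 2\alpha_t^2\bar\rho L^2$. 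Taking total expectations over $\xi_0,\ldots,\xi_{t-1}$ and invoking the tower property delivers \eqref{eqn:desc_2}.

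\medskip

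The remaining steps are essentially bookkeeping, mirroring the end of the proof of Theorem~\ref{thm:stochastic_sub}: telescoping \eqref{eqn:desc_2} over $t=0,\ldots,T$, lower-bounding $\EE[\varphi_{1/\bar\rho}(x_{T+1})]$ by $\min\varphi$, rearranging, and recognizing that $\tfrac{1}{\sum_t\alpha_t}\sum_t\alpha_t\EE\|\nabla\varphi_{1/\bar\rho}(x_t)\|^2$ is exactly $\EE\|\nabla\varphi_{1/\bar\rho}(x_{t^*})\|^2$ by the sampling rule for $t^*$; this gives \eqref{eqn:recurse_tower_2}. Finally, specializing $\bar\rho=2\rho$ (permissible since $2\rho\in(\rho,2\rho]$) and $\alpha_t=\gamma/\sqrt{T+1}$ with $\gamma\in(0,1/(2\rho)]$, so that $\alpha_t\le 1/(2\rho)=1/\bar\rho$ as required, and noting $\sum_t\alpha_t=\gamma\sqrt{T+1}$ and $\sum_t\alpha_t^2=\gamma^2$, yields \eqref{eqn:compl_proj2}.

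\medskip

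The only genuine obstacle is the descent estimate itself, but that work has already been isolated into Lemma~\ref{lem:descent}; once it is in hand, the proof is a direct adaptation of the projected case. The one point demanding care is verifying that the theorem's hypotheses precisely match the hypotheses of Lemma~\ref{lem:descent} (namely $\bar\rho\le 2\rho$ and $\alpha_t\le 1/\bar\rho$), so that the lemma may be applied at every iteration, and that the constant-stepsize constraint $\gamma\le 1/(2\rho)$ guarantees $\alpha_t\le 1/\bar\rho$ when $\bar\rho=2\rho$.
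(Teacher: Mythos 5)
Your proposal is correct and follows essentially the same route as the paper: both invoke Lemma~\ref{lem:descent} inside the defining inequality of the Moreau envelope at $\hat x_t$, convert $\|x_t-\hat x_t\|^2$ to $\bar\rho^{-2}\|\nabla\varphi_{1/\bar\rho}(x_t)\|^2$, and then telescope exactly as in Theorem~\ref{thm:stochastic_sub}. The error term you obtain, $2\alpha_t^2\bar\rho L^2$, is what the paper's own derivation produces and is the constant consistent with \eqref{eqn:recurse_tower_2}; the $\alpha_t^2\bar\rho L^2$ printed in \eqref{eqn:desc_2} is a factor-of-two slip in the displayed statement, not a flaw in your argument.
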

\begin{proof}
	
	We successively observe
	\begin{align*}
	\EE_t \left[\varphi_{1/\bar\rho}(x_{t+1})\right] &\leq \EE_t \left[\varphi(\hat x_t) + \frac{\bar\rho}{2} \|\hat x_t-x_{t+1}\|^2\right]  \\
	&\leq \varphi(\hat x_t) + \frac{\bar \rho}{2}\left[\|x_t - \hat x_t \|^2+4\alpha_t^2L ^2 - 2\alpha_t(\bar  \rho - \rho)\|x_t - \hat x_t\|^2\right]\\
	&= \varphi_{1/\bar{\rho}}( x_t)+\bar \rho\left[2\alpha_t^2L ^2 - \alpha_t(\bar  \rho - \rho)\|x_t - \hat x_t\|^2\right],
	\end{align*}
	where the first inequality follows directly from the definition of the proximal map and the second follows from Lemma~\ref{lem:descent}. Taking expectations  with respect to $\xi_0,\ldots, \xi_{t-1}$ yields the claimed inequality \eqref{eqn:desc_2}. The rest of the proof proceeds as in Theorem~\ref{thm:stochastic_sub}. Namely, unfolding the recursion \eqref{eqn:desc_2} yields: 
	\begin{align*}
	\EE &\left[\varphi_{1/\bar\rho}(x_{T+1})\right] \leq \varphi_{1/\bar\rho}(x_{0}) + 2\bar\rho L^2\sum_{t = 0}^T \alpha_t^2 - \frac{\bar \rho - \rho}{\bar \rho}\EE\sum_{t = 0}^T\alpha_t\|x_t - \hat x_t\|^2.
	\end{align*} 
	Lower-bounding the left-hand side by $\min \varphi$ and rearranging, we obtain the bound: 
	\begin{equation}\label{eqn:main_ineq2}
	\begin{aligned}
	\frac{1}{\sum_{t=0}^T \alpha_t}&\sum_{t=0}^T \alpha_t \EE\left[\|\nabla \varphi_{1/\bar \rho}(x_t)\|^2\right]\leq \frac{\bar{\rho}}{\bar \rho-\rho}\cdot\frac{(\varphi_{1/\bar \rho}(x_0) - \min \varphi) + 2\bar\rho L^2\sum_{t=0}^T \alpha_t^2}{\sum_{t=0}^T \alpha_t}.
	\end{aligned}
	\end{equation}
	Recognizing the left-hand-side as $\EE\left[\|x_{t^*} - \hat x_{t^*}\|^2\right]$ establishes~\eqref{eqn:recurse_tower_2}. Setting $\bar{\rho}=2\rho$ and $\alpha_t=\frac{\gamma}{\sqrt{T+1}}$ in~\eqref{eqn:recurse_tower_2} yields the final guarantee \eqref{eqn:compl_proj2}.
\end{proof}

\subsection*{Proximal stochastic gradient for smooth minimization}
Let us now look at the consequences of our results in the setting when $f$ is $C^1$-smooth with $\rho$-Lipschitz gradient. Note, that then $f$ is automatically $\rho$-weakly convex. 
In this smooth setting, it is common to replace assumption (A3) with the finite variance condition:
\begin{itemize}
	\item[$\overline{(A3)}$]\label{it4} There is a real $\sigma \geq 0$ such that the inequality, $\EE_\xi\left[ \|G(x, \xi) - \nabla f(x)\|^2\right] \leq \sigma^2$, holds for all $x \in \dom r$. 
\end{itemize}

Henceforth, let us therefore assume that $f$ is $C^1$-smooth with $\rho$-Lipschitz gradient, and Assumptions (A1), (A2), and $\overline{\rm (A3)}$ hold.

All of the results in Section~\ref{sec:proxmeth} can be easily modified to apply to this setting. In particular, Lemma~\ref{lem:prox_ident} holds verbatim, while Lemma~\ref{lem:descent} extends as follows.

\begin{lem}\label{lem:descent_smooth}
	Fix a real $\bar \rho> \rho$ and a
	sequence $\alpha_t\in (0, 1/{\bar{\rho}}]$. Then the inequality holds:
	\begin{align*}
	\EE_t\|x_{t+1} - \hat x_t\|^2 
		&\leq  \|x_t - \hat x_t \|^2  + \alpha_t^2 \sigma^2- \alpha_t(\bar \rho - \rho)\|x_t - \hat x_t\|^2.
	\end{align*}
\end{lem}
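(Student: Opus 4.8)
The plan is to reproduce the proof of Lemma~\ref{lem:descent} almost verbatim, changing only how the stochastic term is controlled. As before, set $\delta := 1-\alpha_t\bar\rho$, and recall that here $v_t = \EE_t[G(x_t,\xi)] = \nabla f(x_t)$ and $\hat v_t = \nabla f(\hat x_t)$ are genuine gradients, since $f$ is $C^1$-smooth. Invoking Lemma~\ref{lem:prox_ident} together with the $1$-Lipschitz (nonexpansive) property of $\prox_{\alpha_t r}$ \cite[Proposition 12.19]{RW98}, exactly as in \eqref{eqn:non_expproof_2}, I would first reach
$$\EE_t\|x_{t+1} - \hat x_t\|^2 \leq \EE_t\|\delta(x_t - \hat x_t) - \alpha_t(G(x_t,\xi_t) - \hat v_t)\|^2.$$

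The next step is a bias--variance split, which is where the finite variance condition $\overline{\rm (A3)}$ replaces (A3). I would write $G(x_t,\xi_t) - \hat v_t = (G(x_t,\xi_t) - v_t) + (v_t - \hat v_t)$, observe that $\EE_t[G(x_t,\xi_t) - v_t] = 0$ while $\delta(x_t - \hat x_t) - \alpha_t(v_t - \hat v_t)$ is deterministic given the conditioning, so the cross term vanishes. Then $\overline{\rm (A3)}$ bounds $\alpha_t^2\EE_t\|G(x_t,\xi_t) - v_t\|^2 \leq \alpha_t^2\sigma^2$, leaving the purely deterministic quantity
$$\EE_t\|x_{t+1} - \hat x_t\|^2 \leq \|\delta(x_t - \hat x_t) - \alpha_t(v_t - \hat v_t)\|^2 + \alpha_t^2\sigma^2.$$

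It remains to bound the deterministic squared norm. Writing $u := x_t - \hat x_t$ and $g := v_t - \hat v_t = \nabla f(x_t) - \nabla f(\hat x_t)$, I would expand $\|\delta u - \alpha_t g\|^2$ and control the cross term by hypomonotonicity \eqref{eqn:stronger_ineq_hypo}, $\langle u, g\rangle \geq -\rho\|u\|^2$ (admissible since $\alpha_t \leq 1/\bar\rho$ gives $\delta \geq 0$), and the last term by $\rho$-Lipschitz continuity of $\nabla f$, $\|g\|^2 \leq \rho^2\|u\|^2$. These collapse the coefficient into a perfect square:
$$\|\delta u - \alpha_t g\|^2 \leq (\delta + \alpha_t\rho)^2\|u\|^2 = (1 - \alpha_t(\bar\rho - \rho))^2\|u\|^2.$$

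The only place the stepsize restriction is genuinely needed, and the one point requiring care, is converting this squared factor into the linear descent coefficient claimed in the statement. Since $\alpha_t \leq 1/\bar\rho$ forces $1 - \alpha_t(\bar\rho - \rho) \in (0,1]$, one has $(1 - \alpha_t(\bar\rho - \rho))^2 \leq 1 - \alpha_t(\bar\rho - \rho)$, and combining with the $\alpha_t^2\sigma^2$ term yields the stated bound. I expect no serious obstacle; the subtlety worth flagging is simply that, whereas Lemma~\ref{lem:descent} imposed $\bar\rho \leq 2\rho$ in order to discard a nonnegative quadratic term and keep the descent coefficient $2(\bar\rho-\rho)$, here $\bar\rho > \rho$ is arbitrary, so the leftover $+\,\alpha_t^2(\bar\rho-\rho)^2$ must instead be absorbed by this square-to-linear relaxation, which is precisely what degrades the descent coefficient from $2(\bar\rho-\rho)$ to $(\bar\rho-\rho)$.
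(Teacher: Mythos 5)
Your proposal is correct and follows the paper's proof essentially verbatim: the same reduction via Lemma~\ref{lem:prox_ident} and nonexpansiveness, the same bias--variance split with the cross term vanishing by (A2), the same use of hypomonotonicity plus Lipschitz continuity of $\nabla f$ to collapse the coefficient into $(1-\alpha_t(\bar\rho-\rho))^2$, and the same square-to-linear relaxation (the paper writes this as $2-\alpha_t(\bar\rho-\rho)\geq 1$, which is the identical inequality). Your closing remark correctly identifies why the descent coefficient degrades from $2(\bar\rho-\rho)$ in Lemma~\ref{lem:descent} to $(\bar\rho-\rho)$ here, and why the restriction $\bar\rho\leq 2\rho$ is no longer needed.
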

\begin{proof}
	By the same argument as in Lemma~\ref{lem:descent}, we arrive at \eqref{eqn:est_need_later} with ${\hat v}_t=\nabla f({\hat x}_t)$. Set $\delta := 1-\alpha_t\bar\rho$ and $w_t := G(x_t, \xi_t) - \nabla f(x_t) $. Adding and subtracting $\nabla f(x_t)$, we successively deduce
	\begin{align}
	\EE_t\|x_{t+1} - \hat x_t\|^2
	&\leq \EE_t\|\delta (x_t - \hat x_t) - \alpha_t( G(x_t, \xi_t) -  \nabla f({\hat x}_t))\|^2\notag\\
	&= \EE_t\|\delta (x_t - \hat x_t) - \alpha_t( \nabla f(x_t)- \nabla f(\hat x_t)) - \alpha_t w_t\|^2\notag\\	
	&= \|\delta (x_t - \hat x_t) - \alpha_t( \nabla f(x_t)- \nabla f(\hat x_t)) \|^2 + \alpha_t^2\EE_t\| w_t \|^2\label{eqn:cross_termcancel3}\\	
	&\leq \delta^2\|x_t - \hat x_t \|^2 - 2\delta\alpha_t\dotp{x_t - \hat x_t, \nabla f(x_t) - \nabla f(\hat x_t)} \notag\\
	&\hspace{20pt} + \alpha_t^2\| \nabla f(x_t)- \nabla f(\hat x_t) \|^2 + \alpha_t^2\sigma^2\label{eqn:expandsquare} \\
	&\leq(\delta^2 + 2\delta \alpha_t\rho + \rho^2\alpha_t^2 )\|x_t - \hat x_t \|^2+\alpha_t^2 \sigma^2 \label{eqn:lip_hyp_stuff}\\
	&= \|x_t - \hat x_t \|^2  + \alpha_t^2 \sigma^2 - \alpha_t(\bar \rho - \rho)(2- \alpha_t(\bar \rho - \rho))\|x_t - \hat x_t\|^2,\notag
		\end{align}
	where \eqref{eqn:cross_termcancel3} follows from assumption (A2), namely  $\EE_t G(x_t, \xi_t)=\nabla f(x_t)$,  \eqref{eqn:expandsquare} follows by expanding the square and using assumption $\overline{(A3)}$, and  \eqref{eqn:lip_hyp_stuff} follows from \eqref{eqn:stronger_ineq_hypo} and  Lipschitz continuity of $\nabla f$. The assumption $\bar \rho \geq \rho$ guarantees $2- \alpha_t(\bar \rho - \rho)\geq 1$. The result follows. 
\end{proof}

We can now state the convergence guarantees of the proximal stochastic gradient method. The proof is completely analogous to that of Theorem~\ref{thm:stochastic_sub2}, with Lemma~\ref{lem:descent_smooth} playing the role of Lemma~\ref{lem:descent}.

\begin{cor}[Stochastic prox-gradient method for smooth minimization]\label{cor:conv_guarant} \hfill \\
	Fix a real $\bar \rho> \rho$ and a
	stepsize sequence $\alpha_t\in (0, 1/\bar{\rho}]$.
	Then the iterates $x_t$ generated by Algorithm~\ref{alg:subgradient} satisfy
	\begin{equation}\label{eqn:desc_3}
	\EE \left[\varphi_{1/\bar\rho}(x_{t+1})\right]\leq \EE[\varphi_{1/\bar\rho}(x_{t})]-\frac{\alpha_t(\bar \rho-\rho)}{2\bar \rho} \EE\left[\|\nabla \varphi_{1/\bar \rho}(x_{t})\|^2\right]+ \frac{\alpha_t^2\bar\rho \sigma^2}{2},
	\end{equation}
	and the point $x_{t^*}$ returned by Algorithm~\ref{alg:subgradient} satisfies:
	\begin{equation}\label{eqn:recurse_tower_3}
	\EE \left[\|\nabla \varphi_{1/\bar \rho}(x_{t^*})\|^2\right]\leq \frac{2\bar{\rho}}{\bar \rho-\rho}\cdot\frac{(\varphi_{1/\bar \rho}(x_0) - \min \varphi) + \frac{\bar\rho \sigma^2}{2}\sum_{t=0}^T \alpha_t^2}{\sum_{t=0}^T \alpha_t}.
	\end{equation} 
	In particular, if Algorithm~\ref{alg:subgradient}  uses the constant parameter $\alpha_t=\frac{\gamma}{\sqrt{T+1}}$, for some real $\gamma\in (0,\frac{1}{2\rho}]$, then the point $x_{t^*}$ satisfies:
	\begin{equation*}
	\EE \left[\|\nabla \varphi_{1/(2\rho)}(x_{t^*})\|^2\right]
	\leq 4\cdot\frac{(\varphi_{1/\bar \rho}(x_0) - \min \varphi) + \rho \sigma^2\gamma^2}{\gamma\sqrt{T+1}}.
	\end{equation*} 
\end{cor}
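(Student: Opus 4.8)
The plan is to reproduce the argument of Theorem~\ref{thm:stochastic_sub2} line for line, substituting the sharper smooth-case descent estimate of Lemma~\ref{lem:descent_smooth} for Lemma~\ref{lem:descent}. First I would bound the envelope at the new iterate by evaluating the defining minimization \eqref{eqn:moreau} at the feasible point $y=\hat x_t$, namely $\varphi_{1/\bar\rho}(x_{t+1})\le \varphi(\hat x_t)+\tfrac{\bar\rho}{2}\|\hat x_t-x_{t+1}\|^2$. Taking the conditional expectation $\EE_t[\cdot]$ and inserting the bound of Lemma~\ref{lem:descent_smooth} on $\EE_t\|x_{t+1}-\hat x_t\|^2$ turns the right-hand side into $\varphi(\hat x_t)+\tfrac{\bar\rho}{2}\|x_t-\hat x_t\|^2$ plus the error contributions $\tfrac{\bar\rho}{2}\alpha_t^2\sigma^2-\tfrac{\bar\rho}{2}\alpha_t(\bar\rho-\rho)\|x_t-\hat x_t\|^2$.

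Next I would recognize $\varphi(\hat x_t)+\tfrac{\bar\rho}{2}\|x_t-\hat x_t\|^2=\varphi_{1/\bar\rho}(x_t)$ directly from $\hat x_t=\prox_{\varphi/\bar\rho}(x_t)$, and rewrite the remaining negative term using Lemma~\ref{lem:moreau_grad} in the form $\|x_t-\hat x_t\|=\bar\rho^{-1}\|\nabla\varphi_{1/\bar\rho}(x_t)\|$. This produces the one-step inequality $\EE_t[\varphi_{1/\bar\rho}(x_{t+1})]\le \varphi_{1/\bar\rho}(x_t)-\tfrac{\alpha_t(\bar\rho-\rho)}{2\bar\rho}\|\nabla\varphi_{1/\bar\rho}(x_t)\|^2+\tfrac{\alpha_t^2\bar\rho\sigma^2}{2}$; applying the tower rule over $\xi_0,\dots,\xi_{t-1}$ then gives \eqref{eqn:desc_3}. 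The factor $\tfrac12$ relative to \eqref{eqn:desc_2} is simply an accounting of the smaller constants $\alpha_t^2\sigma^2$ and $\alpha_t(\bar\rho-\rho)$ appearing in Lemma~\ref{lem:descent_smooth} as compared with Lemma~\ref{lem:descent}.

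Finally I would telescope \eqref{eqn:desc_3} over $t=0,\dots,T$, lower-bound $\EE[\varphi_{1/\bar\rho}(x_{T+1})]$ by $\min\varphi$, and rearrange to isolate the weighted average $\bigl(\sum_{t}\alpha_t\bigr)^{-1}\sum_t \alpha_t\,\EE\|\nabla\varphi_{1/\bar\rho}(x_t)\|^2$, which by the sampling rule $\PP(t^*=t)=\alpha_t/\sum_i\alpha_i$ equals $\EE\|\nabla\varphi_{1/\bar\rho}(x_{t^*})\|^2$; this yields \eqref{eqn:recurse_tower_3}. Setting $\bar\rho=2\rho$ and $\alpha_t=\gamma/\sqrt{T+1}$, so that $\sum_t\alpha_t=\gamma\sqrt{T+1}$, $\sum_t\alpha_t^2=\gamma^2$, and $\tfrac{2\bar\rho}{\bar\rho-\rho}=4$, gives the stated specialization (here $\varphi_{1/\bar\rho}=\varphi_{1/(2\rho)}$). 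Since the genuinely delicate work---the nonexpansiveness of the prox step and the cancellation of the stochastic cross term under the variance assumption $\overline{(A3)}$---has already been carried out in Lemma~\ref{lem:descent_smooth}, no real obstacle remains; the only care needed is bookkeeping the constants so that the final factors match.
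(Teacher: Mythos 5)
Your proposal is correct and is exactly the argument the paper intends: the paper itself only remarks that the proof is ``completely analogous to that of Theorem~\ref{thm:stochastic_sub2}, with Lemma~\ref{lem:descent_smooth} playing the role of Lemma~\ref{lem:descent},'' and your write-up fills in precisely that substitution with the constants tracked correctly. Nothing further is needed.
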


As mentioned at the end of Section~\ref{sec:subdiff_mor_env}, it is immediate to translate the complexity estimate in Corollary~\ref{cor:conv_guarant} to an analogous estimate in terms of the size of the prox-gradient mapping \eqref{eqn:prox_grad_map}, thereby allowing for a direct comparison with previous results.

\section{Stochastic model-based minimization}\label{sec:set_up_env}
In the previous section, we established the complexity of 
$O(\varepsilon^{-4})$ for the stochastic proximal subgradient methods. In this section, we show that the complexity $O(\varepsilon^{-4})$ persists for a much wider class of algorithms, including the stochastic proximal point and prox-linear algorithms. Henceforth, we consider the optimization problem
\begin{equation}\label{eqn:stoch_approx}
\min_{x\in\R^d}~ \varphi(x) := f(x)+r(x),
\end{equation}
where $r\colon\R^d\to \R\cup\{\infty\}$ is a closed function (not necessarily convex) and $f\colon\R^d\to\R$ is locally Lipschitz. We assume that the only access to $f$ is through a {\em stochastic one-sided model}. 

\begin{assumption}[Stochastic one-sided model]
{\rm
Fix a probability space $(\Omega,\mathcal{F},P)$ and equip $\R^d$ with the Borel $\sigma$-algebra. 
We assume that there exist real $\tau,\eta,\lipsymb\in \R$ such that the following four properties hold: 
\begin{enumerate}
	\item[(B1)] {\bf (Sampling)} It is possible to generate i.i.d.\ realizations $\xi_1,\xi_2, \ldots \sim P$.
	\item[(B2)]\label{it:B2} {\bf (One-sided accuracy)} There is an open convex set $U$ containing $\dom r$ and a measurable function $(x,y,\xi)\mapsto g_x(y,\xi)$, defined on $U\times U\times\Omega$, satisfying  $$\EE_{\xi}\left[f_x(x,\xi)\right]=f(x)
	\qquad \forall x\in U,$$ and 
	$$\EE_{\xi}\left[f_x(y,\xi)-f(y)\right]
	\leq\frac{\tau}{2}\|y-x\|^2\qquad \forall x,y\in U.$$
	\item[(B3)] {\bf (Weak-convexity)} The function $f_x(\cdot,\xi)+r(\cdot)$ is $\eta$-weakly convex $\forall x\in U$, a.e. $\xi\in \Omega$.
	\item[(B4)] {\bf (Lipschitz property)} There exists a measurable function $L \colon \Omega\to \R_+$  satisfying 
	$\sqrt{\EE_{\xi}\left[L(\xi)^2\right]} \leq \lipsymb$
	and such that 
	\begin{equation}\label{eqn:lip_mod}
	f_x(x, \xi) - f_x(y, \xi)  \le  L(\xi)\|x-y\|, 
	\end{equation}
	for all $x,y\in U$ and a.e. $\xi\sim P$.
	
%
\end{enumerate}}
\end{assumption}

It will be useful for the reader to keep in mind the following lemma, which shows that the objective function $\varphi$ is itself weakly convex with parameter $\tau+\eta$ and that $f$ is $\lipsymb$-Lipschitz continuous on $U$.
\begin{lem}\label{lem:weak_conv}
	The function $\varphi$ is $(\tau + \eta)$-weakly convex and the inequality holds:
	\begin{equation}\label{eqn:lip_func}
		|f(x)-f(y)|\leq \lipsymb\|x-y\|,\qquad \textrm{for all }x,y\in U.\end{equation}
\end{lem}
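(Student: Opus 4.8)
I want to prove two things about $\varphi = f + r$: first, that $\varphi$ is $(\tau+\eta)$-weakly convex, and second, that $f$ is $\lipsymb$-Lipschitz on $U$. These are independent, so I will handle them separately. For the weak convexity, the cleanest route is to verify the approximate secant inequality \eqref{eqn:sec_ineq} from Lemma~\ref{lem:char_subdiff}, since that characterization lets me sidestep subgradients entirely and work with function values, which is exactly what the model assumptions (B2)--(B3) give me.

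\textbf{Weak convexity of $\varphi$.} Fix $x, y \in U$ and $\lambda \in [0,1]$, and set $z := \lambda x + (1-\lambda)y$, which lies in $U$ by convexity of $U$. By (B3), for a.e.\ $\xi$ the function $f_z(\cdot,\xi) + r(\cdot)$ is $\eta$-weakly convex, so the approximate secant inequality applies to it at the points $x, y$:
\begin{equation*}
f_z(z,\xi) + r(z) \leq \lambda\bigl(f_z(x,\xi)+r(x)\bigr) + (1-\lambda)\bigl(f_z(y,\xi)+r(y)\bigr) + \tfrac{\eta\lambda(1-\lambda)}{2}\|x-y\|^2.
\end{equation*}
Now I take expectations over $\xi$. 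The key point is that the basepoint of the model is $z$, so the one-sided accuracy in (B2) gives me exactly what I need on both sides: on the left, $\EE_\xi[f_z(z,\xi)] = f(z)$ (agreement at the basepoint), and on the right, $\EE_\xi[f_z(x,\xi)] \leq f(x) + \tfrac{\tau}{2}\|x-z\|^2$ and similarly $\EE_\xi[f_z(y,\xi)] \leq f(y) + \tfrac{\tau}{2}\|y-z\|^2$ (the upper-bound property). Substituting these and using $\|x-z\| = (1-\lambda)\|x-y\|$ and $\|y-z\| = \lambda\|x-y\|$, the extra quadratic terms combine to $\tfrac{\tau}{2}\bigl(\lambda(1-\lambda)^2 + (1-\lambda)\lambda^2\bigr)\|x-y\|^2 = \tfrac{\tau\lambda(1-\lambda)}{2}\|x-y\|^2$. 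Adding this to the $\eta$-term yields precisely the $(\tau+\eta)$-approximate secant inequality for $\varphi$, so Lemma~\ref{lem:char_subdiff} delivers $(\tau+\eta)$-weak convexity.

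\textbf{Lipschitz continuity of $f$.} For the bound \eqref{eqn:lip_func}, I use (B2) and (B4) together. Fix $x, y \in U$. Writing $f(x) - f(y) = \EE_\xi[f_x(x,\xi)] - \EE_\xi[f_y(y,\xi)]$ directly mixes two different models and is awkward, so instead I estimate $f(x) - f(y)$ by inserting the model centered at $y$: by (B2), $f(x) = \EE_\xi[f_x(x,\xi)]$ and $\EE_\xi[f_y(x,\xi)] \leq f(x) + \tfrac{\tau}{2}\|x-y\|^2$, while $f(y) = \EE_\xi[f_y(y,\xi)]$. The Lipschitz model property \eqref{eqn:lip_mod} applied to the model $f_y(\cdot,\xi)$ gives $f_y(y,\xi) - f_y(x,\xi) \leq L(\xi)\|x-y\|$; taking expectations and using $\sqrt{\EE_\xi[L(\xi)^2]} \leq \lipsymb$ together with Jensen's inequality ($\EE_\xi[L(\xi)] \leq \lipsymb$) bounds one direction. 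The symmetric estimate centered at $x$ bounds the other, yielding $|f(x)-f(y)| \leq \lipsymb\|x-y\|$.

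\textbf{The main obstacle.} The delicate bookkeeping is choosing the \emph{right} model basepoint at each step so that (B2) is applied in the direction where it is an equality versus where it is an inequality. For the secant inequality one must center at the convex combination $z$ so that the agreement property lands on the left-hand side; for Lipschitz continuity one must be careful that the one-sided accuracy always appears with the correct sign, since (B2) only bounds $f_x(y,\xi) - f(y)$ from above, not below. Everything else is routine algebra, so I expect no genuine difficulty beyond tracking which inequality points which way.
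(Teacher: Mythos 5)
Your weak convexity argument is correct and is essentially identical to the paper's: center the model at $\bar x = \lambda x + (1-\lambda)y$, apply the approximate secant inequality from (B3) to $f_{\bar x}(\cdot,\xi)+r(\cdot)$, take expectations, and use (B2) once as an equality (at the basepoint) and twice as an upper bound, with the quadratic errors combining to $\tfrac{\tau\lambda(1-\lambda)}{2}\|x-y\|^2$. No issues there.

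The Lipschitz part, however, has a genuine gap. Your two-point estimate gives
\begin{equation*}
f(y)-f(x) \;=\; \bigl(f(y)-\EE_\xi[f_y(x,\xi)]\bigr) + \bigl(\EE_\xi[f_y(x,\xi)]-f(x)\bigr) \;\leq\; \lipsymb\|x-y\| + \tfrac{\tau}{2}\|x-y\|^2,
\end{equation*}
and the quadratic error term $\tfrac{\tau}{2}\|x-y\|^2$ coming from (B2) does \emph{not} cancel; it survives in both directions, so what you have actually proved is $|f(x)-f(y)|\leq \lipsymb\|x-y\|+\tfrac{\tau}{2}\|x-y\|^2$, which is strictly weaker than \eqref{eqn:lip_func} whenever $\tau>0$ and $x,y$ are not close. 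The paper gets around this by observing that the quadratic term is negligible only \emph{locally}: it first deduces $\ls_{y\to x}\frac{f(x)-f(y)}{\|x-y\|}\leq \lipsymb$, hence $\|\nabla f(x)\|\leq\lipsymb$ at every point of differentiability, and then upgrades this pointwise gradient bound to a global Lipschitz bound on the convex set $U$ via the gradient formula for the Clarke subdifferential and the mean value theorem for locally Lipschitz functions. If you want to avoid that machinery, a more elementary repair of your argument is to chain your estimate along the segment $[x,y]\subset U$: subdividing into $n$ equal pieces and summing gives $|f(x)-f(y)|\leq \lipsymb\|x-y\|+\tfrac{\tau}{2n}\|x-y\|^2$, and letting $n\to\infty$ kills the quadratic term. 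Either way, some additional step is needed; as written, the conclusion $|f(x)-f(y)|\leq\lipsymb\|x-y\|$ does not follow from your displayed inequalities.
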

\begin{proof}
	Fix arbitrary points $x, y \in \dom r$ and a real $\lambda \in [0, 1]$, and set $\bar x = \lambda x + (1-\lambda) y$. Define the function $f_x(y):=\EE_{\xi}[f_x(y,\xi)]$.
	Taking into account the equivalence of weak convexity with the approximate secant inequality \eqref{eqn:sec_ineq}, we successively deduce 
	\begin{align}
	\varphi(\bar x)&= \EE_{\xi}\left[r(\bar x) + f_{\bar x}(\bar x, \xi) \right]\label{eqn:1}\\
	&\leq \lambda\EE_{\xi}\left[ r(x) + f_{\bar x}(x, \xi) \right] +  (1-\lambda)\EE_{\xi}\left[ r(y) + f_{\bar x}(y, \xi) \right] + \tfrac{\eta\lambda(1-\lambda)}{2}\|x-y\|^2\label{eqn:2}\\
	&= \lambda(r(x) + f_{\bar x}(x))+  (1-\lambda)(r(y) + f_{\bar x}(y) ) + \tfrac{\eta\lambda(1-\lambda)}{2}\|x-y\|^2\notag\\
	&\leq \lambda\varphi(x)+  (1-\lambda)\varphi(y) + \tfrac{\tau(\lambda^2(1-\lambda) + \lambda(1-\lambda)^2)}{2}\|x-y\|^2 +  \tfrac{\eta\lambda(1-\lambda)}{2}\|x-y\|^2\label{eqn:3}\\
	&= \lambda\varphi(x)+  (1-\lambda)\varphi(y) +  \tfrac{(\tau + \eta)\lambda(1-\lambda)}{2}\|x-y\|^2,\notag
	\end{align}
	where \eqref{eqn:1} uses (B2), inequality \eqref{eqn:2} uses (B3), and \eqref{eqn:3} uses (B2).
	Thus $\varphi$ is $(\tau + \eta)$-weakly convex, as claimed.
	
		Next, taking expectations in (B2) and in \eqref{eqn:lip_mod} yields the estimates:
\begin{align*}
	f(x)-f_x(y)&\leq \lipsymb \|x-y\|\qquad \textrm{and}\qquad f_x(y)-f(y)\leq \frac{\tau}{2}\|x-y\|^2.
\end{align*}
Thus for any point $x\in U$, we deduce 
$$\ls_{y\to x}\frac{f(x)-f(y)}{\|x-y\|}\leq\ls_{y\to x} \frac{\lipsymb \|x-y\|+\frac{\tau}{2}\|y-x\|^2}{\|x-y\|}= \lipsymb.$$
In particular, when $f$ is differentiable at $x$, setting $y=x-s\nabla f(x)$ with $s\searrow 0$, we deduce $\|\nabla f(x)\|\leq \lipsymb$. Since $f$ is locally Lipschitz continuous, its Lipschitz constant on $U$ is no greater than $\sup_{y\in U}\{\|\nabla f(y)\|: f \textrm{ is differentiable at }y\}$.\footnote{This follows by combining gradient formula for the Clarke subdifferential \cite[Theorem 8.1]{CLSW} with the mean value theorem \cite[Theorem 2.4]{CLSW}.} We therefore deduce that $f$ is $\lipsymb$-Lipschitz continuous on $U$, as claimed.
\end{proof}

We can now formalize the algorithm we investigate, as Algorithm~\ref{alg:stoc_prox}.  The reader should note that, in contrast to the  previously discussed algorithms, Algorithm~\ref{alg:stoc_prox} employs a nondecreasing stepsize $\beta_t$, which is inversely proportional to $\alpha_t$. This notational choice will  simplify the analysis and complexity guarantees that follow.

\smallskip
\begin{algorithm}[H]
	{\bf Input:} $x_0\in \R^d$,  real $\bar \rho > \tau + \eta$, a sequence $\{\beta_t\}_{t\geq 0} \subseteq (\bar \rho, \infty)$, and iteration count $T$
	{\bf Step } $t=0,\ldots,T$:\\		
	\begin{equation*}\left\{
	\begin{aligned}
	&\textrm{Sample } \xi_t \sim P\\
	& \textrm{Set } x_{t+1} = \argmin_{x}~ \left\{r(x) + f_{x_t}(x,\xi_t) + \frac{\beta_t}{2} \|x - x_t\|^2\right\}
	\end{aligned}\right\},
	\end{equation*}
	Sample $t^*\in \{0,\ldots,T\}$ according to the discrete probability distribution
	$$\mathbb{P}(t^*=t)\propto\frac{\bar \rho - \tau - \eta}{\beta_t-\eta}.$$
	{\bf Return} $x_{t^*}$		
	\caption{Stochastic Model Based Minimization
	}
	\label{alg:stoc_prox}
\end{algorithm}
\smallskip

\subsection{Analysis of the algorithm}
Henceforth, let $\{x_t\}_{t\geq 0}$ be the iterates generated by Algorithm~\ref{alg:stoc_prox} and let $\{\xi_t\}_{t\geq 0}$ be the corresponding samples used. For each index $t\geq 0$, define the proximal point 
$$\hat x_t=\prox_{\varphi/\bar \rho}(x_t).
$$ 
As in Section~\ref{sec:prox_stoc_subgrad}, we  will use the symbol $\mathbb{E}_{t}[\cdot]$ to denote the expectation conditioned on all the realizations $\xi_0,\xi_1,\ldots, \xi_{t-1}$.
The analysis of Algorithm~\ref{alg:stoc_prox} relies on the following lemma, which establishes two descent type properties. Estimate~\eqref{eqn:recurs_1} is in the same spirit as Lemma~\ref{lem:descent} in Section~\ref{sec:prox_stoc_subgrad}. The estimate~\eqref{eqn:recurs_2}, in contrast, will be used at the end of the section to obtain the convergence rate of Algorithm~\ref{alg:stoc_prox} in function values under convexity assumptions.

\begin{lem}\label{lem:key_lem}
	In general, for every index $t \geq 0$, we have
	\begin{equation}\label{eqn:recurs_1}
	\EE_t \|\hat x_t - x_{t+1}\|^2    \leq \|\hat x_t-x_{t}\|^2-\frac{\bar \rho-\tau-\eta}{\beta_t-\eta}\|\hat x_t-x_{t}\|^2  + \frac{4\lipsymb^2}{(\beta_t-\eta)(\beta_t-\bar \rho)}.
	\end{equation}
	Moreover, for any point $x\in \dom r$, the inequality holds:
	\begin{equation}\label{eqn:recurs_2}
	\EE_t\left[\| x_{t+1}-x\|^2  \right] \leq \frac{\beta_t+\tau}{\beta_t-\eta}\| x_{t}-x\|^2  - \frac{2}{\beta_t - \eta}\EE_t[\varphi(x_{t+1}) - \varphi(x)]+ \frac{2\lipsymb^2}{\beta_t(\beta_t-\eta)}.	
	\end{equation}
\end{lem}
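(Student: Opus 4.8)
The plan is to prove both inequalities by starting from the optimality conditions of the proximal subproblem defining $x_{t+1}$ and exploiting the weak convexity of $f_{x_t}(\cdot,\xi_t)+r(\cdot)$ guaranteed by (B3), together with the one-sided accuracy (B2) and the Lipschitz bound (B4). The key observation is that since $x_{t+1}$ minimizes $r(x)+f_{x_t}(x,\xi_t)+\tfrac{\beta_t}{2}\|x-x_t\|^2$ and this objective is $(\beta_t-\eta)$-strongly convex, I get a sharp quadratic lower bound: for every competitor $x$,
\begin{equation*}
r(x)+f_{x_t}(x,\xi_t)+\tfrac{\beta_t}{2}\|x-x_t\|^2 \geq r(x_{t+1})+f_{x_t}(x_{t+1},\xi_t)+\tfrac{\beta_t}{2}\|x_{t+1}-x_t\|^2 + \tfrac{\beta_t-\eta}{2}\|x-x_{t+1}\|^2.
\end{equation*}
This single inequality is the workhorse for both estimates.

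For \eqref{eqn:recurs_1}, I would take $x=\hat x_t$ in the strong-convexity inequality above. The goal is to bound $\EE_t\|\hat x_t-x_{t+1}\|^2$, so I would rearrange to isolate $\tfrac{\beta_t-\eta}{2}\|\hat x_t-x_{t+1}\|^2$ and then control the remaining terms. The left side involves $f_{x_t}(\hat x_t,\xi_t)$, whose expectation I replace using (B2): $\EE_\xi[f_{x_t}(\hat x_t,\xi)-f(\hat x_t)]\leq \tfrac{\tau}{2}\|\hat x_t-x_t\|^2$ and $\EE_\xi[f_{x_t}(x_t,\xi)]=f(x_t)$. I would use the Lipschitz bound (B4), namely $f_{x_t}(x_t,\xi)-f_{x_t}(x_{t+1},\xi)\leq L(\xi)\|x_t-x_{t+1}\|$, to convert the model difference into a Lipschitz-times-distance term, and then apply Young's inequality $ab\leq \tfrac{\beta_t-\bar\rho}{2}a^2+\tfrac{1}{2(\beta_t-\bar\rho)}b^2$ to split off the $\|x_t-x_{t+1}\|^2$ term so it can be absorbed. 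The definition $\hat x_t=\prox_{\varphi/\bar\rho}(x_t)$ supplies the inequality $\varphi(\hat x_t)+\tfrac{\bar\rho}{2}\|\hat x_t-x_t\|^2\leq \varphi(x_t)$, which lets me replace the gap $\varphi(\hat x_t)-\varphi(x_t)$ by a favorable multiple of $\|\hat x_t-x_t\|^2$. After taking $\EE_t$ and assembling these pieces, dividing through by $\tfrac{\beta_t-\eta}{2}$ should produce exactly the coefficient $\tfrac{\bar\rho-\tau-\eta}{\beta_t-\eta}$ on $\|\hat x_t-x_t\|^2$ and the constant $\tfrac{4\lipsymb^2}{(\beta_t-\eta)(\beta_t-\bar\rho)}$, using $\sqrt{\EE_\xi[L(\xi)^2]}\leq\lipsymb$.

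For \eqref{eqn:recurs_2} the strategy is similar but cleaner: take $x$ to be the arbitrary point in $\dom r$ from the statement, again invoke the strong-convexity inequality, and this time aim to produce $\varphi(x_{t+1})-\varphi(x)$ rather than a Moreau-envelope decrease. Here I would bound $\EE_t[f_{x_t}(x,\xi_t)]\leq f(x)+\tfrac{\tau}{2}\|x-x_t\|^2$ and $\EE_t[f_{x_t}(x_{t+1},\xi_t)]\geq$ a quantity comparable to $f(x_{t+1})$ — the subtlety being that (B2) gives a one-sided bound in the wrong direction at $x_{t+1}$, so I should instead keep $f_{x_t}(x_{t+1},\xi_t)+r(x_{t+1})\geq \varphi(x_{t+1})$ up to the Lipschitz error controlled by (B4). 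Combining the strong-convexity inequality with these substitutions, rearranging to solve for $\EE_t\|x_{t+1}-x\|^2$, and collecting the coefficient of $\|x_t-x\|^2$ should give $\tfrac{\beta_t+\tau}{\beta_t-\eta}$, while the Lipschitz slack yields the constant $\tfrac{2\lipsymb^2}{\beta_t(\beta_t-\eta)}$.

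The main obstacle, I expect, is the careful bookkeeping of which direction each one-sided inequality points and choosing the Young's-inequality split parameter so that the $\|x_t-x_{t+1}\|^2$ terms cancel cleanly rather than leaving residual positive multiples; getting the final constants to match $\tfrac{4\lipsymb^2}{(\beta_t-\eta)(\beta_t-\bar\rho)}$ and $\tfrac{2\lipsymb^2}{\beta_t(\beta_t-\eta)}$ exactly depends on making the sharp choice of that parameter (evidently $\tfrac{1}{2}(\beta_t-\bar\rho)$ for the first estimate and $\tfrac{1}{2}\beta_t$ for the second). A secondary delicacy is the interchange of expectation with the proximal/optimality arguments: since $x_{t+1}$ depends on $\xi_t$, I must apply (B2) and (B4) pointwise in $\xi_t$ before taking $\EE_t$, which is legitimate because $\hat x_t$ and $x_t$ are $\mathcal{F}_t$-measurable but requires stating the model inequalities at the random point $x_{t+1}$ with the sample-dependent Lipschitz constant $L(\xi_t)$ prior to integrating.
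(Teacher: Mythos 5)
Your overall skeleton matches the paper's: the $(\beta_t-\eta)$-strong-convexity three-point inequality for the proximal subproblem is exactly the paper's workhorse, (B2) is applied at deterministic points, and (B4) plus Cauchy--Schwarz/Young with split parameters $\beta_t-\bar\rho$ and $\beta_t$ is how the constants arise. However, there is a genuine gap in your treatment of \eqref{eqn:recurs_1}. After you use (B4) to replace $f_{x_t}(x_{t+1},\xi_t)$ by $f_{x_t}(x_t,\xi_t)-L(\xi_t)\|x_{t+1}-x_t\|$ and take $\EE_t$, the right-hand side contains $\varphi(\hat x_t)-f(x_t)-\EE_t[r(x_{t+1})]$: the $f$-term sits at $x_t$ while the $r$-term sits at $x_{t+1}$. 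Your proposed comparison $\varphi(\hat x_t)+\tfrac{\bar\rho}{2}\|\hat x_t-x_t\|^2\le\varphi(x_t)$ (competitor $x_t$ in the Moreau subproblem) then leaves the difference $r(x_t)-\EE_t[r(x_{t+1})]$ dangling, and this is not controllable for a general closed weakly convex $r$, on which no Lipschitz assumption is made. The paper instead first upgrades $-f(x_t)$ to $-f(x_{t+1})+\lipsymb\|x_{t+1}-x_t\|$ using the $\lipsymb$-Lipschitz continuity of $f$ itself (Lemma~\ref{lem:weak_conv}, an ingredient your plan omits and which must itself be derived from (B2) and (B4)), assembles the full $\varphi(x_{t+1})$, and only then invokes minimality of $\hat x_t$ with competitor $x_{t+1}$: $\varphi(\hat x_t)\le\varphi(x_{t+1})+\tfrac{\bar\rho}{2}\|x_{t+1}-x_t\|^2-\tfrac{\bar\rho}{2}\|\hat x_t-x_t\|^2$. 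The extra $+\tfrac{\bar\rho}{2}\|x_{t+1}-x_t\|^2$ produced by this comparison is precisely why the Young parameter must be $\beta_t-\bar\rho$ rather than $\beta_t$; you assert the parameter $\tfrac12(\beta_t-\bar\rho)$, but the competitor-$x_t$ route you describe would only call for $\beta_t$ — an internal inconsistency indicating the step is not actually worked out.

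A second, smaller issue is your closing remark that you would state "the model inequalities at the random point $x_{t+1}$." Assumption (B2) is an inequality on the expectation over $\xi$ at a fixed point $y$, so it cannot legitimately be invoked at $x_{t+1}$, which depends on $\xi_t$. The admissible moves are: (B4) pointwise in $\xi$ to shift the model's argument from $x_{t+1}$ to $x_t$, followed by Cauchy--Schwarz to obtain $\lipsymb\sqrt{\EE_t\|x_{t+1}-x_t\|^2}$; (B2) at the deterministic points $x$, $x_t$, $\hat x_t$; and the deterministic Lipschitz bound on $f$ from Lemma~\ref{lem:weak_conv} at $x_{t+1}$. With this bookkeeping corrected, your derivation of \eqref{eqn:recurs_2} goes through essentially as in the paper, since there the competitor $x$ is deterministic and the chain $f_{x_t}(x_{t+1},\xi_t)\to f_{x_t}(x_t,\xi_t)\to f(x_t)\to f(x_{t+1})$ delivers exactly the needed $\varphi(x_{t+1})$ with total Lipschitz slack $2\lipsymb\sqrt{\EE_t\|x_{t+1}-x_t\|^2}$.
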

\begin{proof}
	Recall that  the function $x\mapsto r(x) + f_{x_t}(x, \xi_t)+\frac{\beta_t}{2}\|x-x_t\|^2$ is strongly convex with constant $\beta_t-\eta$ and $x_{t+1}$ is its minimizer. Hence for any $x\in \dom r$, the inequality holds:
	\begin{align*}
	\left(r(x) + f_{x_t}(x, \xi_t)+ \tfrac{\beta_t}{2}\|x-x_t\|^2\right) \geq \Big(r(x_{t+1}) + &f_{x_t}( x_{t+1}, \xi_t) +\tfrac{\beta_t}{2} \|x_{t+1}-x_t\|^2\Big)\\
	&+ \tfrac{\beta_t-\eta}{2}\|x - x_{t+1}\|^2.
	\end{align*}
	Rearranging and taking expectations we successively deduce 
	\begin{align}
	&\EE_t\left[\frac{\beta_t - \eta}{2}\|x - x_{t+1}\|^2 +\frac{\beta_t}{2} \|x_{t+1}-x_t\|^2 - \frac{\beta_t}{2}\|x - x_{t}\|^2\right] \notag\\
	&\leq \EE_t[r(x) + f_{x_t}(x, \xi_t) -  r( x_{t+1}) - f_{x_t}( x_{t+1}, \xi_t) ]\notag\\
	&\leq \EE_t[r(x) +  f_{x_t}(x, \xi_t) -  r(x_{t+1}) -  f_{x_t}( x_{t}, \xi_t) +  L(\xi)\|x_{t+1}-x_t\|] \label{eqn:lip_modelcvx}\\
	&\leq r(x)+\EE_{\xi}[f_{x_t}(x,  \xi)]-\EE_{t}[r(x_{t+1})]-\EE_{\xi}[f_{x_t}( x_{t}, \xi)]\label{eqn:CS}\\
	&\quad+\sqrt{\EE_{\xi}[L(\xi)^2]}\cdot\sqrt{\EE_t[\|x_{t+1}-x_t\|^2]}\notag\\
	&\leq r(x)+f(x)-\EE_{t}[r(x_{t+1})]-f(x_t)+\frac{\tau}{2}\|x-x_t\|^2+\lipsymb\sqrt{\EE_t[\|x_{t+1}-x_t\|^2]}\label{eqn:apply_mod_expcvx}\\
	&= \EE_t[r(x)+f(x)-r(x_{t+1})-f(x_t)]+\frac{\tau}{2}\|x-x_t\|^2+\lipsymb\sqrt{\EE_t[\|x_{t+1}-x_t\|^2]}\notag\\
	&\leq \EE_t[r(x)+f(x)-r(x_{t+1})-f(x_{t+1})]+\frac{\tau}{2}\|x-x_t\|^2\label{eqn:apply_mod_exp2cvx}\\
	&\quad+\lipsymb\EE_t[\|x_{t+1}-x_t\|]+\lipsymb\sqrt{\EE_t[\|x_{t+1}-x_t\|^2]},\notag
\end{align}
where \eqref{eqn:lip_modelcvx} follows from Assumption (B4), inequality \eqref{eqn:CS} follows from Cauchy-Schwartz, inequality \eqref{eqn:apply_mod_expcvx} follows from (B2),  \eqref{eqn:apply_mod_exp2cvx} follows from  Lemma~\ref{lem:weak_conv}.

	Define $\delta:=\sqrt{\EE_t[\| x_{t+1}-x_t\|^2]}$ and notice $\delta\geq \EE_t\|x_t - x_{t+1}\|$. Rearranging \eqref{eqn:apply_mod_exp2cvx}, we immediately deduce
	\begin{align*}
	\EE_t\left[\frac{\beta_t - \eta}{2}\| x - x_{t+1}\|^2  \right] &\leq \EE_t\left[\frac{\beta_t+\tau}{2}\|x^\ast- x_{t}\|^2\right]  - \frac{\beta_t\delta^2}{2} +  2\lipsymb\delta - \EE_t[\varphi(x_{t+1}) -  \varphi(x)]\\
	&\leq \EE_t\left[\frac{\beta_t+\tau}{2}\|x- x_{t}\|^2\right]  + \frac{2\lipsymb^2}{\beta_t} - \EE_t[\varphi(x_{t+1}) -  \varphi(x)],	\end{align*}
	where the last inequality follows by maximizing the right-hand-side in $\delta \in\R$. Dividing through by $\frac{\beta-\eta}{2}$, we arrive at the claimed inequality \eqref{eqn:recurs_2}.

Next setting $x = \hat x_t$ in \eqref{eqn:apply_mod_exp2cvx} and using the definition of  the prox-point, we obtain
\begin{align}
&\EE_t\left[\frac{\beta_t - \eta}{2}\|\hat x_t - x_{t+1}\|^2 +\frac{\beta_t}{2} \|x_{t+1}-x_t\|^2 - \frac{\beta_t}{2}\|\hat x_t - x_{t}\|^2\right] \notag\\
	&\leq  \EE_t\left[-\frac{\bar \rho}{2}\|\hat x_t - x_{t}\|^2 + \frac{\bar\rho}{2}\|x_{t+1} - x_t\|^2\right]+\frac{\tau}{2}\|\hat x_t-x_t\|^2+2\lipsymb\delta\notag \\
	&= \frac{\tau-\bar \rho}{2}\|\hat x_t - x_{t}\|^2+\frac{\bar \rho}{2}\cdot\EE_t[\|x_{t+1}-x_t\|^2]+2\lipsymb\delta.\notag
	\end{align}
Rearranging, we deduce
	\begin{align}
	\frac{\beta_t - \eta}{2}\cdot\EE_t\|\hat x_t - x_{t+1}\|^2&\leq \frac{\beta_t-\bar\rho+\tau}{2}\|\hat x_t-x_t\|^2+\frac{\bar\rho-\beta_t}{2}\delta^2+2\lipsymb\delta\label{eqn:max_gamcvx}\\
	&\leq \frac{\beta_t-\bar\rho+\tau}{2}\|\hat x_t-x_t\|^2+\frac{2\lipsymb^2}{\beta_t-\bar\rho},\notag
	\end{align}
	where the last inequality follows by maximizing the right-hand-side of \eqref{eqn:max_gamcvx} in $\delta \in\R$. 
After multiplying through by $\frac{2}{\beta_t-\eta}$, we arrive at the claimed estimate \eqref{eqn:recurs_1}.
\end{proof}

We can now establish the convergence guarantees of Algorithm~\ref{alg:stoc_prox}.
\begin{thm}[Convergence rate]\label{thm:main_theorem}
	Fix a real $\bar \rho> \tau+\eta$ and a
	 sequence $\{\beta_t\}_{t\geq 0}\in (\bar \rho, \infty)$.
	Then the iterates $x_t$ generated by Algorithm~\ref{alg:stoc_prox} satisfy
\begin{equation}\label{eqn:desc_4}
\EE \left[\varphi_{1/\bar\rho}(x_{t+1})\right]\leq \EE[\varphi_{1/\bar\rho}(x_{t})]-\frac{\bar \rho - \tau - \eta}{2\bar \rho(\beta_t-\eta)}\EE\left[\|\nabla \varphi_{1/\bar\rho}(x_t)\|^2\right]  + \frac{2\bar\rho \lipsymb^2}{(\beta_t-\eta)(\beta_t-\bar \rho)},
\end{equation}
and the point $x_{t^*}$ returned by Algorithm~\ref{alg:subgradient} satisfies:
\begin{equation}\label{eqn:recurse_tower_4}
\EE\|\nabla \varphi_{1/\bar \rho}(x_{t^*})\|^2\leq \frac{\bar \rho(\varphi_{1/\bar \rho}(x_{0})-\min_x \varphi)+  2{\bar \rho}^2 \lipsymb^2\cdot\sum_{t=0}^T\frac{1}{(\beta_t-\eta)(\beta_t - \bar \rho)}}{\sum_{t=0}^T \frac{\bar \rho - \tau - \eta}{2(\beta_t-\eta)}}.
\end{equation} 
In particular, if Algorithm~\ref{alg:subgradient}  uses the constant parameter $\beta_t=\bar\rho+\gamma^{-1}\sqrt{T+1}$, for some real $\gamma>0$, then the point $x_{t^*}$ satisfies:
	\begin{equation}\label{eqn:compl_proj4}
	\EE\|\nabla \varphi_{1/\bar \rho}(x_{t^*})\|^2\leq\frac{2\left(\bar \rho(\varphi_{1/\bar \rho}(x_{0})-\min_x \varphi)+  2{\bar \rho}^2 \lipsymb^2\gamma^2\right)}{\bar \rho-\tau-\eta}\cdot\left(\frac{\bar\rho-\eta}{T+1}+\frac{1}{\gamma\sqrt{T+1} }\right).
	\end{equation}
\end{thm}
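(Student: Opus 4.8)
The plan is to convert the one-step contraction for the proximal points, established in Lemma~\ref{lem:key_lem}, into a descent recursion for the Moreau envelope, and then telescope. First I would exploit the two faces of the Moreau envelope at the point $\hat x_t := \prox_{\varphi/\bar\rho}(x_t)$: on the one hand, the very definition of $\varphi_{1/\bar\rho}$ gives the exact identity $\varphi_{1/\bar\rho}(x_t) = \varphi(\hat x_t) + \tfrac{\bar\rho}{2}\|\hat x_t - x_t\|^2$; on the other hand, since $\hat x_t$ is merely a feasible candidate in the minimization defining $\varphi_{1/\bar\rho}(x_{t+1})$, we obtain the upper bound $\varphi_{1/\bar\rho}(x_{t+1}) \le \varphi(\hat x_t) + \tfrac{\bar\rho}{2}\|\hat x_t - x_{t+1}\|^2$. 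Taking $\EE_t$ of the latter and substituting the contraction \eqref{eqn:recurs_1} for $\EE_t\|\hat x_t - x_{t+1}\|^2$ causes the common term $\varphi(\hat x_t) + \tfrac{\bar\rho}{2}\|\hat x_t - x_t\|^2 = \varphi_{1/\bar\rho}(x_t)$ to telescope against the leading $\|\hat x_t - x_t\|^2$ contribution. Finally I would rewrite $\|\hat x_t - x_t\|^2 = \bar\rho^{-2}\|\nabla\varphi_{1/\bar\rho}(x_t)\|^2$ via Lemma~\ref{lem:moreau_grad} and take full expectation; after tracking the coefficient $\tfrac{\bar\rho}{2}\cdot\tfrac{\bar\rho-\tau-\eta}{\beta_t-\eta}\cdot\bar\rho^{-2} = \tfrac{\bar\rho-\tau-\eta}{2\bar\rho(\beta_t-\eta)}$, this yields exactly \eqref{eqn:desc_4}. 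This part is structurally identical to the proof of Theorem~\ref{thm:stochastic_sub2}, with \eqref{eqn:recurs_1} playing the role of Lemma~\ref{lem:descent}.

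Next I would sum \eqref{eqn:desc_4} over $t = 0, \ldots, T$, lower-bound the surviving endpoint $\EE[\varphi_{1/\bar\rho}(x_{T+1})]$ by $\min\varphi$, and rearrange to isolate the weighted sum $\sum_{t=0}^T \tfrac{\bar\rho-\tau-\eta}{2\bar\rho(\beta_t-\eta)}\EE\|\nabla\varphi_{1/\bar\rho}(x_t)\|^2$. The key bookkeeping step is to recognize that, after clearing the factor $\bar\rho$, the weights $\tfrac{\bar\rho-\tau-\eta}{2(\beta_t-\eta)}$ are exactly proportional to the sampling probabilities $\PP(t^*=t)$ prescribed in Algorithm~\ref{alg:stoc_prox}, so that dividing by $\sum_{t=0}^T \tfrac{\bar\rho-\tau-\eta}{2(\beta_t-\eta)}$ turns the weighted average into the genuine expectation $\EE\|\nabla\varphi_{1/\bar\rho}(x_{t^*})\|^2$. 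This produces \eqref{eqn:recurse_tower_4}.

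The last step is the specialization to the constant choice $\beta_t \equiv \bar\rho + \gamma^{-1}\sqrt{T+1}$. Here I would substitute $\beta_t - \bar\rho = \gamma^{-1}\sqrt{T+1}$ and $\beta_t - \eta = (\bar\rho-\eta) + \gamma^{-1}\sqrt{T+1}$ into \eqref{eqn:recurse_tower_4}; the denominator collapses to $(T+1)\cdot\tfrac{\bar\rho-\tau-\eta}{2(\beta_t-\eta)}$, and after cancelling, the factor $\tfrac{\beta_t-\eta}{T+1} = \tfrac{\bar\rho-\eta}{T+1} + \tfrac{1}{\gamma\sqrt{T+1}}$ emerges naturally, while the noise term contributes $\tfrac{2\bar\rho^2\lipsymb^2\gamma}{\sqrt{T+1}}$. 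To reach the stated form \eqref{eqn:compl_proj4} one then simply overestimates this noise term by $2\bar\rho^2\lipsymb^2\gamma^2\bigl(\tfrac{\bar\rho-\eta}{T+1} + \tfrac{1}{\gamma\sqrt{T+1}}\bigr)$, which is legitimate because the difference of the two equals the nonnegative quantity $2\bar\rho^2\lipsymb^2\gamma^2\tfrac{\bar\rho-\eta}{T+1}$.

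I do not anticipate a genuine obstacle, since the heavy lifting is done by Lemma~\ref{lem:key_lem}; the only delicate points are the sign and coefficient tracking that makes the sampling weights align with the descent weights, and the final overestimation that reconciles the exact bound with the more symmetric published form \eqref{eqn:compl_proj4}. I would double-check that $\bar\rho > \eta$ (which follows from $\bar\rho > \tau+\eta$ together with $\tau \ge 0$), so that the discarded term is indeed nonnegative and every denominator $\beta_t - \eta$ and $\beta_t - \bar\rho$ is strictly positive.
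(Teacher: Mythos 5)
Your proposal is correct and follows essentially the same route as the paper: the descent inequality \eqref{eqn:desc_4} is obtained exactly as you describe by feeding the contraction \eqref{eqn:recurs_1} of Lemma~\ref{lem:key_lem} into the Moreau-envelope upper bound at $\hat x_t$ and invoking Lemma~\ref{lem:moreau_grad}, after which telescoping and matching the weights $\tfrac{\bar\rho-\tau-\eta}{2(\beta_t-\eta)}$ to the sampling distribution of $t^*$ yields \eqref{eqn:recurse_tower_4}. Your final substitution, including the deliberate overestimation of the noise term by the nonnegative quantity $2\bar\rho^2\lipsymb^2\gamma^2\tfrac{\bar\rho-\eta}{T+1}$, correctly supplies the arithmetic that the paper leaves implicit in passing to \eqref{eqn:compl_proj4}.
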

\begin{proof}
	Using the definition of the Moreau envelope and appealing to the estimate \eqref{eqn:recurs_1} in Lemma~\ref{lem:key_lem}, we deduce  
	\begin{align*}
	\EE_t[ \varphi_{1/\bar \rho}(x_{t+1})] &\leq \EE_t\left[ \varphi( \hat x_{t}) + \frac{\bar \rho}{2} \| x_{t+1} - \hat x_t \|^2 \right] \\
	&\leq  \varphi( \hat x_{t})  +\frac{\bar \rho}{2}\cdot \EE_t\left[\| x_{t+1} - \hat x_t \|^2\right],\\
	&\leq \varphi( \hat x_{t}) +\frac{\bar \rho}{2}\left[\|\hat x_t-x_{t}\|^2-\frac{\bar \rho-\tau-\eta}{\beta_t-\eta}\|\hat x_t-x_{t}\|^2  + \frac{4\lipsymb^2}{(\beta_t-\eta)(\beta_t-\bar \rho)}\right]\\
	&= \varphi_{1/\bar \rho}(x_{t}) - \frac{\bar \rho - \tau - \eta}{2\bar{\rho}(\beta_t-\eta)}\|\nabla \varphi_{1/\bar \rho}(x_t)\|^2  + \frac{2\bar\rho \lipsymb^2}{(\beta_t-\eta)(\beta_t-\bar \rho)}.
	\end{align*}
	 Taking expectations  with respect to $\xi_0,\ldots, \xi_{t-1}$ and using the tower rule  yields the claimed inequality \eqref{eqn:desc_4}.
		Unfolding the recursion \eqref{eqn:desc_4} yields: 
	$$\EE[ \varphi_{1/\bar \rho}(x_{t+1})]\leq \varphi_{1/\bar \rho}(x_{0})-\sum_{t=0}^T \left[\frac{\bar \rho - \tau - \eta}{2\bar{\rho}(\beta_t-\eta)}\EE[\|\nabla \varphi_{1/\bar \rho}(x_{t})\|^2]\right]  + 2\bar \rho \lipsymb^2\cdot\sum_{t=0}^T\tfrac{1}{(\beta_t-\eta)(\beta_t - \bar \rho)}.$$
	Using the inequality $\varphi_{1/\bar \rho}(x_{t+1})\geq \min \varphi$ and rearranging yields
	$$\sum_{t=0}^T \frac{\bar \rho - \tau - \eta}{\beta_t-\eta}\EE[\|\varphi_{1/\bar{\rho}}(x_t)\|^2]\leq 2\bar{\rho}(\varphi_{1/\bar \rho}(x_{0})-\min \varphi)+ 4 \lipsymb^2\bar \rho^2\sum_{t=0}^T\frac{1}{(\beta_t-\eta)(\beta_t - \bar \rho)}$$
	Dividing through by $\sum_{t=0}^T \tfrac{\bar \rho - \tau - \eta}{\beta_t-\eta}$ and recognizing the left side as
	$\EE[\|\varphi_{1/\bar{\rho}}(x_{t^*})\|^2]$ yields \eqref{eqn:recurse_tower_4}. Setting $\bar{\rho}=2\rho$ and $\beta_t=\bar\rho+\gamma^{-1}\sqrt{T+1}$ in~\eqref{eqn:recurse_tower_4} yields the final guarantee \eqref{eqn:compl_proj4}.
\end{proof}

Next we will look at the ``convex setting'', that is when the models $\EE_{\xi}f(\cdot,\xi)$ globally lower abound $f$, without quadratic error, and the functions $ f_x(\cdot,\xi)+r(\cdot)$ are $\mu$-strongly convex. By analogy with the stochastic subgradient method, one would expect that Algorithm~\ref{alg:stoc_prox} drives the  function gap $\EE[\varphi(x_t)-\varphi(\cdot))]$ to zero at the rates $O(\frac{1}{\sqrt{t}})$ and $O(\frac{1}{\mu t})$, in the settings $\mu =0$ and $\mu>0$, respectively. The following two theorems establish exactly that. Even when specializing to the stochastic proximal subgradient method, Theorems~\ref{thm:convergenceC_nonstrong} and \ref{thm:convergenceC} improve on the state of the art. In contrast to previous work \cite{prox_subgrad_duchi,cruz_subgrad},  the norms of the subgradients of $r$ do not enter the complexity bounds established in Theorem~\ref{thm:convergenceC_nonstrong}, while Theorem~\ref{thm:convergenceC}  extends the nonuniform averaging technique of \cite{simpler_subgrad} for strongly convex minimization to the fully proximal setting.

\begin{theorem}[Convergence rate under convexity] \label{thm:convergenceC_nonstrong}\hfill \\
Suppose that $\tau=0$ and the functions $f_x(\cdot,\xi)+r(\cdot)$ are convex. Let $\{x_t\}$ be the iterates generated by  Algorithm~\ref{alg:stoc_prox} and set $\alpha_t=\beta_t^{-1}$. Then for all $T > 0$, we have
\begin{equation}\label{eqn:conv_1}
\EE\left[\varphi\left(\tfrac{1}{\sum_{t=0}^T \alpha_t }\sum_{t=0}^T\alpha_t x_{t+1}\right) - \varphi(x^\ast)\right] \leq \frac{\tfrac{1}{2}\|x_0-x^\ast\|^2 + \lipsymb^2 \sum_{t=0}^T \alpha_t^2 }{\sum_{t=0}^T \alpha_t},
\end{equation}
where $x^*$ is any minimizer of $\varphi$. In particular,  if Algorithm~\ref{alg:subgradient}  uses the constant parameter $\alpha_t=\frac{\gamma}{\sqrt{T+1}}$, for some real $\gamma>0$, then the estimate holds
\begin{equation}\label{eqn:conv_3}
\EE\left[\varphi\left(\tfrac{1}{T+1 }\sum_{t=1}^{T+1} x_{t}\right) - \varphi(x^\ast)\right] \leq \frac{\tfrac{1}{2}\|x_0-x^\ast\|^2 + \lipsymb^2 \gamma^2 }{\gamma\sqrt{T+1}}.
\end{equation}

\end{theorem}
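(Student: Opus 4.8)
The plan is to establish the fundamental recursion \eqref{eqn:conv_1} by exploiting the second descent estimate \eqref{eqn:recurs_2} from Lemma~\ref{lem:key_lem}, which was explicitly designed for this convex analysis, and then specialize. Under the hypotheses $\tau=0$ and convexity of $f_x(\cdot,\xi)+r(\cdot)$, the weak-convexity parameter $\eta$ may be taken to be $0$, so the key inequality \eqref{eqn:recurs_2} simplifies dramatically: with $\eta=\tau=0$ and $\alpha_t=\beta_t^{-1}$, it reads
\begin{equation*}
\EE_t\left[\|x_{t+1}-x\|^2\right]\leq \|x_t-x\|^2 - 2\alpha_t\,\EE_t[\varphi(x_{t+1})-\varphi(x)] + 2\lipsymb^2\alpha_t^2.
\end{equation*}
This is exactly a one-step contraction-plus-noise recursion on the squared distance to an arbitrary $x\in\dom r$, and the entire argument is a telescoping of it.

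First I would instantiate the displayed recursion at $x=x^\ast$, a minimizer of $\varphi$, and rearrange to isolate the function gap:
\begin{equation*}
2\alpha_t\,\EE_t[\varphi(x_{t+1})-\varphi(x^\ast)]\leq \|x_t-x^\ast\|^2 - \EE_t\|x_{t+1}-x^\ast\|^2 + 2\lipsymb^2\alpha_t^2.
\end{equation*}
Next I would take full expectations, sum over $t=0,\ldots,T$, and telescope the distance terms, so that the intermediate iterates cancel and only $\|x_0-x^\ast\|^2$ and the discarded nonnegative term $\EE\|x_{T+1}-x^\ast\|^2$ survive. This yields
\begin{equation*}
2\sum_{t=0}^T\alpha_t\,\EE[\varphi(x_{t+1})-\varphi(x^\ast)]\leq \|x_0-x^\ast\|^2 + 2\lipsymb^2\sum_{t=0}^T\alpha_t^2.
\end{equation*}
Finally I would divide by $2\sum_{t=0}^T\alpha_t$ and invoke Jensen's inequality, using convexity of $\varphi$ (which follows from Lemma~\ref{lem:weak_conv} since $\tau+\eta=0$): the weighted average of the gaps bounds the gap at the weighted-average point $\tfrac{1}{\sum_t\alpha_t}\sum_t\alpha_t x_{t+1}$, giving \eqref{eqn:conv_1} exactly.

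For the specialization \eqref{eqn:conv_3}, I would substitute the constant stepsize $\alpha_t=\gamma/\sqrt{T+1}$, so that $\sum_{t=0}^T\alpha_t=\gamma\sqrt{T+1}$ and $\sum_{t=0}^T\alpha_t^2=\gamma^2$; plugging these into \eqref{eqn:conv_1} and noting that with equal weights the averaged point is the uniform average $\tfrac{1}{T+1}\sum_{t=1}^{T+1}x_t$ produces the stated bound after routine simplification. I do not anticipate a serious obstacle here, since Lemma~\ref{lem:key_lem} already carries the full analytic burden; the only point requiring care is the reduction of the weak-convexity constant to zero under the convexity hypothesis (so that both the objective $\varphi$ is genuinely convex, justifying Jensen, and the factor $\beta_t+\tau$ in \eqref{eqn:recurs_2} collapses to $\beta_t$). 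The proof is then essentially the classical stochastic subgradient telescoping argument, transported verbatim into the model-based setting via the \emph{a posteriori} gap $\varphi(x_{t+1})-\varphi(x^\ast)$ that \eqref{eqn:recurs_2} conveniently bounds.
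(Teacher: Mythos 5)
Your proposal is correct and follows essentially the same route as the paper: specialize the estimate \eqref{eqn:recurs_2} of Lemma~\ref{lem:key_lem} with $\eta=0$, $\tau=0$, and $x=x^\ast$, take expectations, telescope the squared distances, divide by $\sum_t\alpha_t$, and conclude via convexity of $\varphi$ (Jensen), with \eqref{eqn:conv_3} following by substituting the constant stepsize. The only difference is that you spell out the telescoping and the Jensen step, which the paper leaves implicit.
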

\begin{proof}
Setting $\eta:=0$ and $x:=x^*$ in the estimate \eqref{eqn:recurs_2} in Lemma~\ref{lem:key_lem}, and taking expectations of both sides yields 
$$2\alpha_t\EE[\varphi(x_{t+1}) - \varphi(x^*)]\leq \EE\| x_{t}-x^*\|^2  -\EE\left[\| x_{t+1}-x^*\|^2  \right]+ 2\lipsymb^2\alpha_t^2.$$
The  estimate \eqref{eqn:conv_1} then follows by summing across $t=0,\ldots, T$, dividing through by $\sum_{t=0}^T\alpha_t$, and using convexity of $\varphi$. The estimate \eqref{eqn:conv_3} is immediate from \eqref{eqn:conv_1}.
\end{proof}

The following theorem uses the nonuniform averaging technique from \cite{simpler_subgrad}.

\begin{theorem}[Convergence rate under strong convexity] \label{thm:convergenceC}
Suppose that $\tau=0$ and the functions $f_x(\cdot,\xi)+r(\cdot)$ are $\mu$-strongly convex for some $\mu>0$. Then for all $T > 0$, the iterates generated by  Algorithm~\ref{alg:stoc_prox} with $\beta_t=\frac{\mu(t+1)}{2}$ satisfy
\begin{align*}
\EE\left[\varphi\left(\tfrac{2}{(T+2)(T+3)-2}\sum_{t=1}^{T+1} (t+1)x_{t}\right) - \varphi(x^\ast) \right] \leq \frac{\mu\|x_0-x^*\|^2}{(T+2)^2}+\frac{8\lipsymb^2}{\mu(T+2)}.
\end{align*}
where $x^*$ is any minimizer of $\varphi$.
\end{theorem}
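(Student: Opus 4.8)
The plan is to build everything on the second descent estimate~\eqref{eqn:recurs_2} of Lemma~\ref{lem:key_lem}, which is the one designed for function-value guarantees. The first observation is that under the hypotheses $\tau=0$ and $\mu$-strong convexity of each $f_x(\cdot,\xi)+r(\cdot)$, the weak-convexity parameter is simply $\eta=-\mu$, since strong convexity of modulus $\mu$ is the same as weak convexity of modulus $-\mu$. Consequently the subproblem defining $x_{t+1}$ is strongly convex with modulus $\beta_t-\eta=\beta_t+\mu$, and \eqref{eqn:recurs_2} applies verbatim. Setting $x=x^\ast$ a minimizer and clearing denominators (multiplying through by $\beta_t+\mu$), I would arrive at the recursion
\[
2\,\EE[\varphi(x_{t+1})-\varphi(x^\ast)]\le \beta_t\,\EE\|x_t-x^\ast\|^2-(\beta_t+\mu)\,\EE\|x_{t+1}-x^\ast\|^2+\tfrac{2\lipsymb^2}{\beta_t}.
\]

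Next I would substitute the prescribed schedule $\beta_t=\mu(t+1)/2$, so that $\beta_t+\mu=\mu(t+3)/2$, and multiply the $t$-th inequality by the weight $w_t=t+2$. This weight is the crux of the nonuniform averaging technique of \cite{simpler_subgrad}: it is chosen precisely so that the distance terms telescope, because the coefficient $w_t(\beta_t+\mu)=\tfrac{\mu}{2}(t+2)(t+3)$ multiplying $-\EE\|x_{t+1}-x^\ast\|^2$ in step $t$ exactly equals the coefficient $w_{t+1}\beta_{t+1}=\tfrac{\mu}{2}(t+3)(t+2)$ multiplying $+\EE\|x_{t+1}-x^\ast\|^2$ in step $t+1$. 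Summing over $t=0,\dots,T$ and discarding the nonnegative boundary term $w_T(\beta_T+\mu)\EE\|x_{T+1}-x^\ast\|^2$ therefore collapses all the distance terms to the single initial contribution $w_0\beta_0\|x_0-x^\ast\|^2=\mu\|x_0-x^\ast\|^2$.

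It then remains to control the accumulated noise and to convert iterates into an average. The weighted noise sum is $\sum_{t=0}^T \tfrac{2w_t\lipsymb^2}{\beta_t}=\tfrac{4\lipsymb^2}{\mu}\sum_{t=0}^T\tfrac{t+2}{t+1}$, and since $\tfrac{t+2}{t+1}\le 2$ for every $t\ge 0$ this is at most $\tfrac{8\lipsymb^2(T+1)}{\mu}$. Dividing the summed inequality by $2\sum_{t=0}^T w_t=(T+2)(T+3)-2$ and invoking Jensen's inequality, valid because $\varphi$ is convex (indeed $\mu$-strongly convex by Lemma~\ref{lem:weak_conv}), against the weighted average $\tfrac{1}{\sum_t w_t}\sum_{t=0}^T (t+2)x_{t+1}$, which after the reindexing $j=t+1$ is exactly the average $\tfrac{2}{(T+2)(T+3)-2}\sum_{j=1}^{T+1}(j+1)x_j$ appearing in the statement, yields the bound with denominators $(T+2)(T+3)-2$. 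I would close with the two elementary inequalities $(T+2)(T+3)-2\ge (T+2)^2$ and $(T+1)(T+2)\le (T+2)(T+3)-2$ to upgrade these to the cleaner denominators $(T+2)^2$ and $(T+2)$ in the stated estimate.

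I expect the main obstacle to be bookkeeping rather than anything conceptual: correctly identifying $\eta=-\mu$ so that \eqref{eqn:recurs_2} can be reused without modification, and then pinning down the exact weight $w_t=t+2$ that makes the distance terms telescope against the specific schedule $\beta_t=\mu(t+1)/2$. A single misaligned index shift, or confusing $\beta_t+\mu$ with $\beta_{t+1}$, would destroy the cancellation, so the verification of the telescoping identity $w_t(\beta_t+\mu)=w_{t+1}\beta_{t+1}$ is the step that demands the most care.
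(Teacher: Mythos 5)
Your proposal is correct and follows essentially the same route as the paper's proof: both set $\eta=-\mu$, $x=x^\ast$ in estimate \eqref{eqn:recurs_2} of Lemma~\ref{lem:key_lem}, clear the denominator $\beta_t+\mu$, weight by $t+2$ so that the distance terms telescope under $\beta_t=\mu(t+1)/2$, bound the noise sum via $\tfrac{t+2}{t+1}\le 2$, and finish with Jensen and the inequalities $(T+2)(T+3)-2\ge(T+2)^2$ and $(T+2)(T+3)-2\ge(T+1)(T+2)$. The only cosmetic difference is that the paper works with $\Delta_t=\tfrac12\EE\|x^\ast-x_t\|^2$, a factor-of-two normalization that does not change the argument.
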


\begin{proof}
Define $\Delta_t:=\frac{1}{2}\EE[\|x-x_{t}\|^2]$. Setting $\eta:=-\mu$ and $x:=x^*$ in the estimate \eqref{eqn:recurs_2} of Lemma~\ref{lem:key_lem}, taking expectations of both sides, and multiplying through by $(\beta_t+\mu)/2$ yields 
$$\EE[\varphi(x_{t+1}) - \varphi(x^*)]\leq \beta_t\Delta_t  -(\beta_t+\mu){\Delta_{t+1}}+ \frac{\lipsymb^2}{\beta_t}.$$
Plugging in $\beta_t:=\frac{\mu(t+1)}{2}$, multiplying through by $t+2$, and summing, we get
\begin{align*}
\sum_{t=0}^T(t+2)\EE[\varphi(x_{t+1}) - \varphi(x^*)]&\leq \sum_{t=0}^T\left(\tfrac{\mu(t+1)(t+2)}{2}\Delta_t  -\tfrac{\mu(t+2)(t+3)}{2}{\Delta_{t+1}}\right)+ \sum_{t=0}^T\tfrac{2\lipsymb^2(t+2)}{\mu(t+1)}\\
&\leq \mu\Delta_0+\frac{4\lipsymb^2(T+1)}{\mu}
\end{align*}
Dividing through by the sum $\sum_{t=0}^T(t+2)=\frac{(T+2)(T+3)}{2} -1$ and using convexity of $\varphi$, we deduce
\begin{align}
\EE\left[\varphi\left(\tfrac{2}{(T+2)(T+3)-2}\sum_{t=0}^{T} (t+2)x_{t+1}\right) - \varphi(x^\ast) \right] &\leq \tfrac{\mu\|x_0-x^*\|^2}{(T+2)(T+3)-2}+ \tfrac{8\lipsymb^2(T+1)}{\mu((T+2)(T+3)-2)}\notag\\
&\leq \frac{\mu\|x_0-x^*\|^2}{(T+2)^2}+\frac{8\lipsymb^2}{\mu(T+2)},\label{eqn:last_ineq_bach}
\end{align}
where \eqref{eqn:last_ineq_bach} uses the estimate $(T+2)(T+3)-2\geq (T+2)^2$. The proof is complete.
\end{proof}

\subsection{Algorithmic examples}\label{sec:alg_examples}
Let us now look at the consequences of Theorem~\ref{thm:main_theorem} and Theorem~\ref{thm:convergenceC_nonstrong}. We begin with the algorithms  briefly mentioned in the introduction: stochastic proximal point, prox-linear, and proximal subgradient. In each case, we list the standard assumptions under which the methods are applicable, and then verify properties (B1)-(B4) for some $\tau,\eta,\lipsymb\geq 0$. Complexity guarantees for each method then follow immediately from Theorem~\ref{thm:main_theorem}. We then describe the problem of minimizing the expectation of a pointwise maximum of convex function (e.g. Conditional Value-at-Risk), and describe a natural model based algorithm for the problem. Convergence guarantees in function values then follow from Theorem~\ref{thm:convergenceC_nonstrong}.

\smallskip
\subsection*{Stochastic proximal point}
Consider the optimization problem~\eqref{eqn:stoch_approx} under the following assumptions.
\begin{enumerate}
	\item[(C1)] It is possible to generate i.i.d.\ realizations $\xi_1,\xi_2, \ldots \sim P$.
	\item[(C2)] There is an open convex set $U$ containing $\dom r$ and a measurable function $(x,y,\xi)\mapsto f_x(y,\xi)$ defined on $U\times U\times \Omega$ satisfying  $\EE_{\xi}[f_x(y,\xi)]=f(y)$ for all $x,y\in U$.
	\item[(C3)] Each function $r(\cdot)+f_x(\cdot,\xi)$ is $\rho$-weakly convex $\forall x\in U$, a.e. $\xi\in \Omega$.

	\item[(C4)] There exists a measurable function $L \colon \Omega\to \R_+$  satisfying 
	$\sqrt{\EE_{\xi}\left[L(\xi)^2\right]} \leq \lipsymb$
	and such that 
	$$f_x(x,\xi)-f_x(y,\xi)\leq L(\xi)\|x-y\|,$$ 
	 for all $x,y\in U$ and a.e. $\xi\in \Omega$.
\end{enumerate}
The stochastic proximal point method is 
Algorithm~\ref{alg:stoc_prox} with the models 
$f_{x}(y,\xi)$. It is immediate to see that (B1)-(B4) hold with $\tau=0$ and $\eta=\rho$.

\smallskip

\subsection*{Stochastic proximal subgradient}
We next slightly loosen the assumptions (A1)-(A3) for the proximal stochastic subgradient method, by allowing $r$ to be nonconvex, and show how these assumptions imply (B1)-(B4).
Consider the optimization problem~\eqref{eqn:stoch_approx}, and let us assume that the following properties are true.
\begin{enumerate}
	\item[(D1)] It is possible to generate i.i.d.\ realizations $\xi_1,\xi_2, \ldots \sim P$.
	\item[(D2)] The function $f$ is $\rho_1$-weakly convex and $r$ is $\rho_2$-weakly convex, for some $\rho_1,\rho_2\geq 0$.
	\item[(D3)] There is an open convex set $U$ containing $\dom r$ and a measurable mapping $G \colon U \times \Omega \rightarrow \RR^d$ satisfying  $\EE_{\xi}[G(x,\xi)]\in \partial f(x)$ for all $x\in U$.
	\item[(D4)] There is a real $\lipsymb\geq 0$ such that the inequality, $\EE_\xi\left[ \|G(x, \xi)\|^2\right] \leq \lipsymb^2$, holds for all $x \in U$. 
\end{enumerate}
The stochastic subgradient method is Algorithm~\ref{alg:stoc_prox} with the linear models 
$$f_{x}(y,\xi)=f(x)+\left\langle G(x,\xi),y-x\right\rangle.$$
Observe that (B1) and (B3) with $\eta=\rho_2$ are immediate from the definitions; 
(B2) with $\tau=\rho_1$ follows from the discussion in \cite[Section 2]{davis2018stochastic}. Assumption (B4) is also immediate from (D4).

\subsection*{Stochastic prox-linear}
Consider the optimization problem~\eqref{eqn:stoch_approx} with 
$$f(x)=\EE_{\xi\sim P}\left[h\big(c(x,\xi),\xi\big)\right].$$
We assume that there exists an open convex set $U$ containing $\dom r$ such that the following properties are true.
\begin{enumerate}
	\item[(E1)] It is possible to generate i.i.d.\ realizations $\xi_1,\xi_2, \ldots \sim P$.
	\item[(E2)] The assignments $h\colon \R^m\times\Omega\to \R$ and $c\colon U\times\Omega\to \R^m$ are measurable.	
	\item[(E3)] The function $r$ is $\rho$-weakly convex, and there exist square integrable functions $\ell,\gamma,M\colon\Omega\to\R$ such that  for a.e. $\xi\in \Omega$, the function  $z\mapsto h(z,\xi)$ is convex and $\ell(\xi)$-Lipschitz, the map $x\mapsto c(x,\xi)$ is $C^1$-smooth with $\gamma(\xi)$-Lipschitz Jacobian, and the inequality, 
	$\|\nabla c(x,\xi)\|_{{\rm op}}\leq M(\xi)$, holds for all $x\in U$ and a.e. $\xi\in \Omega$.
	\end{enumerate}

The stochastic prox-linear method \cite{duchi_ruan} is Algorithm~\ref{alg:stoc_prox} with the convex models 
$$f_{x}(y,\xi)=h\big(c(x,\xi)+\nabla c(x,\xi)(y-x),\xi\big).$$
Observe that (B1) and (B3) hold trivially with $\eta=\rho$. Assumption (B2) holds with $\tau=\sqrt{\EE_{\xi}[\ell(\xi)]^2} \sqrt{\EE_{\xi}[\gamma(\xi)^2]}$ by \cite[Lemma 3.12]{duchi_ruan}. Assumption (E3) also directly implies (B4) with $\lipsymb=\sqrt{\EE_{\xi}[\ell(\xi)]^2} \sqrt{\EE_{\xi}[M(\xi)^2]}$.

\subsection*{Expectation of convex monotone compositions}
As an application of Theorem~\ref{thm:convergenceC_nonstrong}, suppose  we wish to optimize the problem \eqref{eqn:stoch_approx}, where $r$ is convex and $f$ is given by 
$$f(x)=\EE_{\xi}[h(c(x,\xi),\xi)],$$
Suppose that $h(\cdot,\xi)\colon\R\to\R$ and $c(\cdot,\xi)\colon\R\to\R$ are convex, and $h(\cdot,\xi)$ is also nondecreasing. Note that we do not assume smoothness of $c(\cdot,x)$ and therefore this problem class does not fall with the composite framework discussed above. 

We assume that there exists an open convex set $U$ containing $\dom r$ such that the following properties are true.
\begin{enumerate}
	\item[(F1)] It is possible to generate i.i.d.\ realizations $\xi_1,\xi_2, \ldots \sim P$.
	\item[(F2)] The assignments $h\colon \R\times\Omega\to \R$ and $c\colon U\times\Omega\to \R^m$ are measurable, the functions $h(\cdot,\xi)$, $c(\cdot,\xi)$, and $r(\cdot)$ are convex, and $h(\cdot,\xi)$ is also nondecreasing.
	\item [(F3)] There is a measurable mapping $G \colon U \times \Omega \rightarrow \RR^d$ satisfying  $G(x,\xi)\in \partial_x c(x,\xi)$ for all $x\in U$.

	\item[(F4)] There exist square integrable functions $\ell, M\colon\Omega\to\R$ such that  for a.e. $\xi\in \Omega$, the function  $z\mapsto h(z,\xi)$ is $\ell(\xi)$-Lipschitz and the map $x\mapsto c(x,\xi)$ is $M(\xi)$-Lipschitz for a.e. $\xi\in \Omega$.
	\end{enumerate}
	
One reasonable class of models then reads:
$$f_{x}(y,\xi)=h(c(x,\xi)+\langle G(x,\xi),y-x\rangle,\xi).$$ 
Assumption (B1) is immediate from (F1). Assumption $(F2)$ directly implies $(B2)$ with $\tau=0$ and (B3) with $\eta=0$. Finally (F4) readily implies (B2) with $\lipsymb=\sqrt{\EE_{\xi}[\ell(\xi)]^2} \sqrt{\EE_{\xi}[M(\xi)^2]}$. Thus the stochastic model-based algorithm (Algorithm~\ref{alg:stoc_prox}) enjoys the $O(\frac{1}{\sqrt{t}})$ convergence guarantee in expected function value gap (Theorem~\ref{thm:convergenceC_nonstrong}).

As an illustration, consider the Conditional Value-at-Risk problem, discussed in Example~\ref{exa:cVAr}:
$$\min_{\gamma\in \R, x\in \R^d}~ (1-\alpha)\gamma+\EE_{\xi\sim P}[(g(x,\xi)-\gamma)^+] +r(x),$$
under the assumption that the loss $g(\cdot,\xi)$ is convex. Then given an iterate $(x_t,\gamma_t)$, the stochastic model based algorithm would sample $\xi_t\sim P$, choose a subgradient $v_t\in \partial_x g(x_t,\xi_t)$ and perform the simple update
\begin{align*}
(x_{t+1},\gamma_{t+1})=\argmin_{\gamma\in \R, y\in \R^d}~ (1-\alpha)\gamma+&\left[g(x_t,\xi_t)+\langle v_t,y-x_t \rangle -\gamma\right]^+ +r(y)\\
&\qquad+\frac{\beta_t}{2}(\|y-x_t\|^2+\|\gamma-\gamma_t\|^2).
\end{align*}

\section{Numerical Illustrations}\label{sec:num_ill} 
In this section, we illustrate our three running examples (stochastic subgradient, prox-linear, prox-point) on phase retrieval and blind deconvolution problems, outlined in Section~\ref{eqn:comp}. In particular, our experiments complement the recent paper \cite{duchi_ruan}, which performs an extensive numerical study of the stochastic subgradient and prox-linear algorithms on the phase retrieval problem.

 Our main goal in this section is to illustrate that the update rules for all three algorithms, have essentially the same computational cost. Indeed, the subproblems for the stochastic prox-point and prox-linear algorithms have a closed form solution. Note that our theoretical guarantees (Theorem~\ref{thm:main_theorem}) imply essentially the same worst-case complexity for the stochastic subgradient, prox-linear, and proximal point algorithms. In contrast, our numerical results on both problems clearly show that the latter two algorithms are much better empirically both in terms of speed and robustness to the choice of stepsize. 
 
\subsection{Phase retrieval} 
The experimental set-up for the phase retrieval problem is as follows. We  generate standard Gaussian measurements $a_i\sim N(0,I_{d\times d})$,  for $i=1,\ldots, m$; generate the target signal $\bar x$ and initial point $x_0$ uniformly on the unit sphere; and set $b_i = \langle a_i, \bar x\rangle^2$ for each $i=1,\ldots, m$. We then apply the three stochastic algorithms to the problem 
$$\min_{x\in \R^d}~ \frac{1}{m}\sum_{i=1}^m |\langle a_i,x\rangle^2-b_i|.$$
Each step of the algorithms is trivial to implement. Since the three methods only use one data point at a time, let us define the function 
$$g(x)=|\langle a,x\rangle^2-b|,$$
for a fixed vector $a\in\R^d$ and a real $b\geq 0$.

\paragraph{Stochastic subgradient}
The stochastic subgradient method simply needs to evaluate an element of the subdifferential
$$\partial g(x)=2\langle a,x\rangle a\cdot \left\{\begin{array}{lr}
        \sign(\langle a,x\rangle^2-b), & \text{if } \langle a,x\rangle^2\neq b\\
        ~[-1,1], & \text{o.w.}
        \end{array}\right\}.$$

\paragraph{Stochastic prox-linear}
The stochastic prox-linear method needs to solve subproblems of the form
$$\argmin_{\Delta\in\R^d}~|\langle a,x\rangle^2+2\langle a,x\rangle\langle a,\Delta\rangle-b|+\frac{1}{2\lambda}\|\Delta\|^2.$$
Then the next iterate is defined to be $x+\Delta$. Setting $\gamma=\lambda(\langle a,x\rangle^2-b)$ and $\zeta=2\lambda\langle a,x\rangle a$, we therefore seek to solve the problem 
\begin{equation}\label{eqn:uni_var}
\argmin_{\Delta\in\R^d}~|\gamma+\langle \zeta,\Delta\rangle|+\frac{1}{2}\|\Delta\|^2.
\end{equation}
An explicit solution $\Delta^*$ to this subproblem follows from a standard Lagrangian calculation, and is recorded for example in \cite[Section 4]{duchi_ruan}:
\begin{equation} \label{eqn:uni_var_soln}
\Delta^*=\proj_{[-1,1]}\left(\frac{-\gamma}{\|\zeta\|^2}\right) \zeta.
\end{equation}

\paragraph{Stochastic proximal point}
Finally, the stochastic proximal point method requires solving the problem 
\begin{equation}\label{eqn:subproblem_prox_phase}
\argmin_{y}~|\langle a,y\rangle^2-b|+\frac{1}{2\lambda}\|y-x\|^2
\end{equation}
Let us compute the candidate solutions using first-order optimality conditions:
\begin{equation} \label{eqn:optimal_cond}
\lambda^{-1}(x-y)\in 2\langle a,y\rangle a\cdot \left\{\begin{array}{lr}
        \sign(\langle a,y\rangle^2-b), & \text{if } \langle a,y\rangle^2\neq b\\
        ~[-1,1], & \text{o.w.}
        \end{array}\right\}.
        \end{equation}
An easy computation shows that there are at most four point $y$ that satisfy \eqref{eqn:optimal_cond}:
$$\left\{x-\left(\frac{2\lambda\langle a,x\rangle}{2\lambda \|a\|^2\pm1}\right) a,~x-\left(\frac{\langle a,x\rangle\pm\sqrt{b}}{\|a\|^2}\right) a\right\}.$$
Therefore we may set the next iterate to be the candidate solution $y$ with the lowest function value for the subproblem \eqref{eqn:subproblem_prox_phase}.

We perform three sets of experiments corresponding to $(d,m)=(10,30)$,  $(50,150)$, $(100,300)$, and record the result in Figure~\ref{fig:num_ill_phase}. The dashed blue line indicates the initial functional error. In each set of experiments, we use 100 equally spaced step-size parameters $\beta_t^{-1}$ between $10^{-4}$ and $1$. The figures on the left record the function gap after 100 passes through the data, averaged over 15 rounds. The figures on the  right output the number of epochs used by the stochastic prox-linear and proximal point methods to find a point achieving $10^{-4}$ functional suboptimality, averaged over 15 rounds. It is clear from the figures that the stochastic prox-linear and proximal point algorithms perform much better and are more robust to the choice of the step-size parameter than the stochastic subgradient method. 

\begin{figure}[h!]
\begin{subfigure}{.5\textwidth}
  \centering
   \includegraphics[scale=0.47]{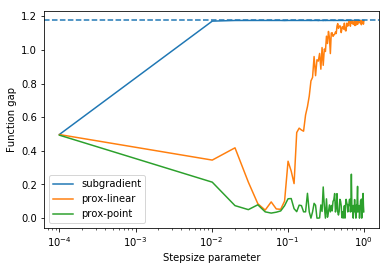} 
\end{subfigure}%
\begin{subfigure}{.5\textwidth}
  \centering
   \includegraphics[scale=0.47]{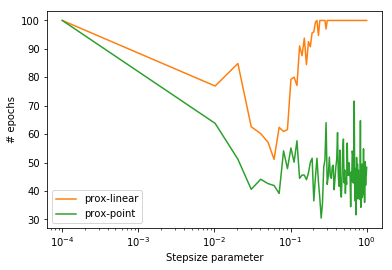}  
\end{subfigure}

\begin{subfigure}{.5\textwidth}
  \centering
   \includegraphics[scale=0.47]{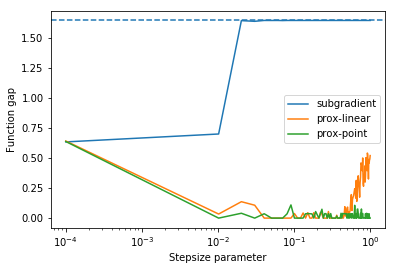}
\end{subfigure}%
\begin{subfigure}{.5\textwidth}
  \centering
  \includegraphics[scale=0.47]{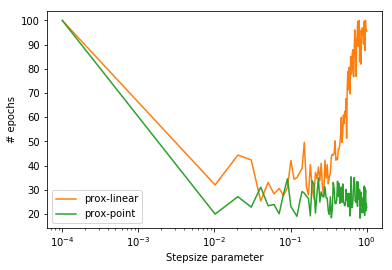}
\end{subfigure}

\begin{subfigure}{.5\textwidth}
  \centering
  \includegraphics[scale=0.47]{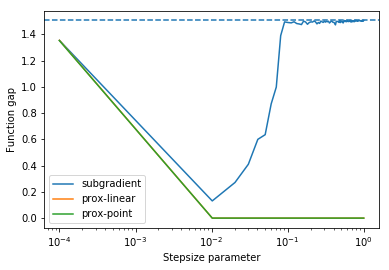}
\end{subfigure}%
\begin{subfigure}{.5\textwidth}
  \centering
  \includegraphics[scale=0.47]{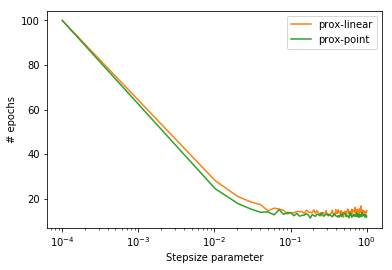}
\end{subfigure}
  \caption{Bottom to top: $(d,m)=(10,30),  (50,150),(100,300)$. The dashed blue line indicates the initial functional error.}
  \label{fig:num_ill_phase}
\end{figure}

\subsection{Blind deconvolution}
We next consider the problem of blind deconvolution. The experimental set-up is as follows. We  generate standard Gaussian measurements $u_i\sim N(0,I_{d_1\times d_1})$ and $v_i\sim N(0,I_{d_2\times d_2})$,  for $i=1,\ldots, m$; generate the target signal $\bar x$ uniformly on the unit sphere; and set $b_i = \langle u_i, \bar x\rangle\langle v_i, \bar x\rangle$ for each $i=1,\ldots, m$. The problem formulation reads:
$$\min_{x,y}~ \frac{1}{m}\sum_{i=1}^m | \langle u_i,x\rangle \langle v_i, y\rangle-b_i|,$$
Again, since the three methods access one data point at a time, define the function 
$$g(x,y)=| \langle u,x\rangle \langle v, y\rangle-b|$$
for some vectors $u\in \R^d_1$ and $v\in \R^d_2$ and real $b\in \R$.

\paragraph{Stochastic subgradient}
The stochastic subgradient method, in each iteration,  evaluates an element of the subdifferential
$$\partial g(x,y)=\left(\langle v,y\rangle u,\langle u,x\rangle v\right)\cdot \left\{\begin{array}{lr}
\sign(\langle u,x\rangle \langle v, y\rangle-b), & \text{if } \langle u,x\rangle \langle v, y\rangle\neq b\\
~[-1,1], & \text{o.w.}
\end{array}\right\}.$$

\paragraph{Stochastic prox-linear}
The stochastic prox-linear method needs to solve subproblems of the form:
$$\argmin_{\Delta_1,\Delta_2} ~| \langle u, x\rangle \langle v,  y\rangle+\langle v,y\rangle \langle u ,\Delta_1\rangle+\langle u,x\rangle\langle v,\Delta_2\rangle -b|+\frac{1}{2\lambda}(\|\Delta_1\|^2+\| \Delta_2\|^2).$$
Once a solution $(\Delta_1,\Delta_2)$ is found, the next iterate is $(x+\Delta_1,y+\Delta_2)$. Clearly, we may rewrite the prox-linear subproblem in the form \eqref{eqn:uni_var} under the identification $\Delta=(\Delta_1,\Delta_2)$, $\zeta=\lambda(\langle v,y\rangle u,\langle u,x\rangle v)$, and $\gamma=\lambda(\langle u, x\rangle \langle v,  y\rangle-b)$. We may then read off the solution directly from \eqref{eqn:uni_var_soln}.

\paragraph{Stochastic proximal point}
Finally, the stochastic proximal point method requires solving the problem 
\begin{equation}\label{eqn:proximal_point_deconv}
\argmin_{x,y}~| \langle u,x\rangle \langle v, y\rangle-b|+\frac{1}{2\lambda}\|x-x_0\|^2+\frac{1}{2\lambda}\|y-y_0\|^2.
\end{equation}
Let us enumerate the critical points. Writing out the optimality conditions for $(x,y)$, there are two cases to consider. In the first case $\langle u,x\rangle \langle v, y\rangle\neq b$, it is straightforward to show that the possible critical point have the form
\begin{equation}\label{eqn:step_1_expr_blind}
\begin{aligned}
x&=x_0-\lambda\left(\frac{\pm\langle v,y_0\rangle-\lambda\|v\|^2\langle u,x_0\rangle}{1-\lambda^2\|u\|^2\|v\|^2}\right) u\\
y&=y_0-\lambda\left(\frac{\pm\langle u,x_0\rangle-\lambda\|u\|^2\langle v,y_0\rangle}{1-\lambda^2\|u\|^2\|v\|^2}\right) v
\end{aligned}
\end{equation}
Indeed, suppose for the moment $\langle u,x\rangle \langle v, y\rangle>b$. Then optimality conditions for \eqref{eqn:proximal_point_deconv} imply
\begin{equation}\label{eqn:step_update_blind}
\begin{aligned}
x&=x_0-\lambda\langle v,y\rangle u,\qquad 
y=y_0-\lambda\langle u,x\rangle v
\end{aligned}
\end{equation}
Thus if we determine $\langle v,y\rangle$ and $\langle u,x\rangle$, we will have an explicit formula for $(x,y)$.
Taking the dot product of the first equation with $u$ and the second with $v$ yields
\begin{equation*}
\lambda\langle v,y\rangle \|u\|^2=\langle u,x_0\rangle-\langle u,x\rangle,\qquad \lambda\langle u,x\rangle \|v\|^2=\langle v,y_0\rangle-\langle v,y\rangle.
\end{equation*}
Solving for $\langle v,y\rangle$ and $\langle u,x\rangle$, we get
\begin{equation*}
\langle u,x\rangle=\frac{\langle u,x_0\rangle-\lambda\|u\|^2\langle v,y_0\rangle}{1-\lambda^2\|u\|^2\|v\|^2},\qquad
\langle v,y\rangle=\frac{\langle v,y_0\rangle-\lambda\|v\|^2\langle u,x_0\rangle}{1-\lambda^2\|u\|^2\|v\|^2}.
\end{equation*}
Combining  these expressions with \eqref{eqn:step_update_blind}, we deduce that $x$ and $y$ can be expressed as in \eqref{eqn:step_1_expr_blind}. The setting  $\langle u,x\rangle \langle v, y\rangle>b$ is completely analogous.

In the second case, suppose $\langle u,x\rangle \langle v, y\rangle=b$. Then optimality condition for \eqref{eqn:proximal_point_deconv} imply that there exists $\gamma$ such that 
\begin{equation*}
x=x_0-\gamma\langle v,y\rangle u,\qquad y=y_0-\gamma\langle u,x\rangle v,\qquad b=\langle u,x\rangle\langle v,y\rangle.
\end{equation*}
We must solve this system of equations for $\gamma$, $\eta:=\langle u,x\rangle$, and $\delta:=\langle v,y\rangle$. Substituting the third equation into the first yields:
\begin{equation}\label{eqn:weird_asswtf}
x=x_0-\gamma\left(\frac{b}{\eta}\right) u,\qquad
y=y_0-\gamma\eta v,\qquad
b=\eta\delta.
\end{equation}
Taking the dot product of the first equation with $u$ and the second with $v$ yields
\begin{equation*}
\eta=\langle u,x_0\rangle-\gamma\left(\frac{b}{\eta}\right) \|u\|^2,\qquad 
\frac{b}{\eta}=\langle v,y_0\rangle-\gamma\eta \|v\|^2.
\end{equation*}
Solving the first equation for $\gamma$, we get  the expression
$\gamma=\frac{\eta\langle u,x_0\rangle-\eta^2}{b\|u\|^2}$. Plugging this formula into the second equation and clearing the denominator, we arrive at the quartic polynomial
$$0=\eta^4\|v\|^2-\eta^3\|v\|^2\langle u,x_0\rangle+b\eta\|u\|^2\langle v,y_0\rangle-b^2\|u\|^2.$$
Thus after finding each root $\eta$, we may set 
$\gamma=\frac{\eta\langle u,x_0\rangle-\eta^2}{b\|u\|^2}$, and then obtain an explicit formula for $(x,y)$ using  \eqref{eqn:weird_asswtf}.

Our numerical experiments are similar to those for phase retrieval. We perform three sets of experiments corresponding to $(d_1,d_2,m)=(10,10,30)$, $(50,50,200)$, $(100,100,400)$, and record the result in Figure~\ref{fig:num_blind}. The dashed blue line indicates the initial functional error. In each set of experiments, we use 100 equally spaced step-size parameters $\beta_t^{-1}$ between $10^{-4}$ and $1$. The figures on the left record the function gap after 100 passes through the data, averaged over 10 rounds. The figures on the  right output the number of epochs used by the stochastic prox-linear and proximal point methods to find a point achieving $10^{-4}$ functional suboptimality, averaged over 10 rounds. As in phase retrieval, it is clear from the figure that the stochastic prox-linear and proximal point algorithms perform much better and are more robust to the choice of the step-size parameter than the stochastic subgradient method. 


\begin{figure}[h!]
\begin{subfigure}{.5\textwidth}
  \centering
   \includegraphics[scale=0.47]{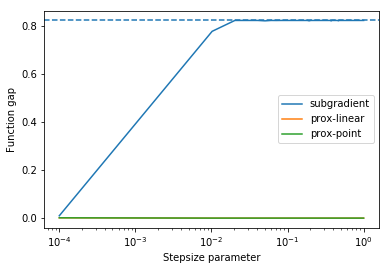} 
\end{subfigure}%
\begin{subfigure}{.5\textwidth}
  \centering
   \includegraphics[scale=0.47]{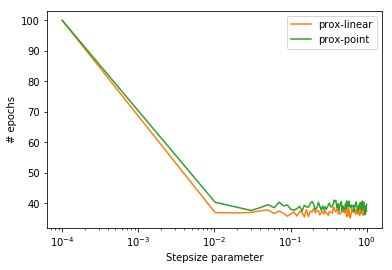}  
\end{subfigure}\\

\begin{subfigure}{.5\textwidth}
  \centering
   \includegraphics[scale=0.47]{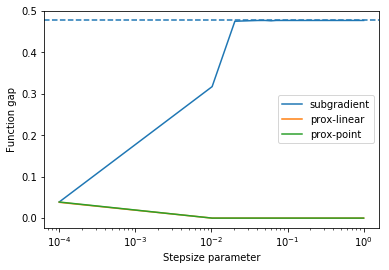} 
\end{subfigure}%
\begin{subfigure}{.5\textwidth}
  \centering
   \includegraphics[scale=0.47]{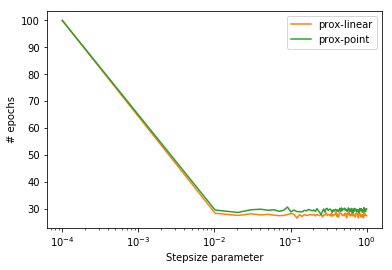}  
\end{subfigure}\\

\begin{subfigure}{.5\textwidth}
  \centering
   \includegraphics[scale=0.47]{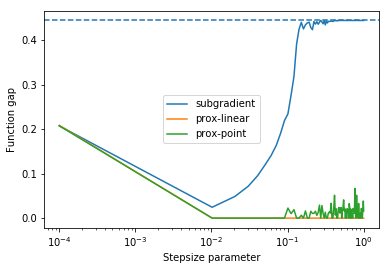} 
\end{subfigure}%
\begin{subfigure}{.5\textwidth}
  \centering
   \includegraphics[scale=0.47]{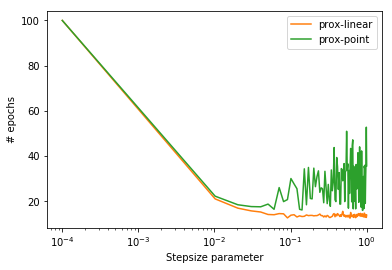}  
\end{subfigure}

  \caption{Bottom to top: $(d_1,d_2,m)=(10,10,50),(50,50,200), (100,100,400)$. The dashed blue line indicates the initial functional error.}
  \label{fig:num_blind}
\end{figure}

\section*{Acknowledgments}
The authors thank John Duchi for his careful reading of the initial version of this manuscript.

\bibliographystyle{siamplain}
\bibliography{bibliography}
\end{document}